\documentclass{article}
\usepackage{amsmath,amssymb,amsthm,color,graphicx,diagbox,tikz,colortbl,array}

\title{Finite Kripke models and provability interpretations in quantified modal logic}
\author{Haruka Kogure\footnote{Email: kogure@stu.kobe-u.ac.jp}
\footnote{Graduate School of System Informatics, Kobe University, 1-1 Rokkodai, Nada, Kobe 657-8501, Japan.}
and Taishi Kurahashi\footnote{Email: kurahashi@people.kobe-u.ac.jp}
\footnote{Graduate School of System Informatics, Kobe University, 1-1 Rokkodai, Nada, Kobe 657-8501, Japan.}}
\date{}

\theoremstyle{plain}
\newtheorem{theorem}{Theorem}[section]
\newtheorem{lemma}[theorem]{Lemma}
\newtheorem{proposition}[theorem]{Proposition}
\newtheorem{corollary}[theorem]{Corollary}

\newtheorem{problem}[theorem]{Problem}
\newtheorem{claim}{Claim}

\theoremstyle{definition}
\newtheorem{definition}[theorem]{Definition}

\newcommand{\PA}{\mathsf{PA}}
\newcommand{\PR}{\mathrm{Pr}}

\newcommand{\Prov}{\mathrm{Prov}}
\newcommand{\Proof}{\mathrm{Proof}}
\newcommand{\Con}{\mathrm{Con}}

\newcommand{\gn}[1]{\ulcorner#1\urcorner}

\newcommand{\DC}[1]{\mathbf{D#1}}
\newcommand{\DCU}[1]{\mathbf{D#1^{U}}}
\newcommand{\DCG}[1]{\mathbf{D#1^{G}}}

\newcommand{\SCU}{\mathbf{\Sigma_1 C^{U}}}

\newcommand{\num}{\overline}
\newcommand{\LA}{\mathcal{L}_A}
\newcommand{\su}{\mathsf{s}}
\newcommand{\ora}{\overrightarrow}

\newcommand{\GL}{\mathbf{GL}}
\newcommand{\QK}{\mathbf{QK}}
\newcommand{\QKbf}{\mathbf{QKbf}}
\newcommand{\QGL}{\mathbf{QGL}}
\newcommand{\QPL}{\mathbf{QPL}}
\newcommand{\lh}{\mathrm{lh}}

\begin{document}

\maketitle

\begin{abstract}
In this paper, we investigate arithmetical completeness with respect to finite Kripke models of quantified modal logic. 
We adapt the finite-model embedding techniques of Artemov and Japaridze to two settings involving finite Kripke models.
First, for conversely well-founded finite Kripke models of quantified modal logic, we construct a $\Sigma_2$ Fefermanian provability predicate together with an arithmetical interpretation that embeds the model into arithmetic.
Second, for finite constant domain Kripke models of quantified modal logic, we construct a $\Sigma_1$ provability predicate satisfying $\mathbf{D2^G}$ and an arithmetical interpretation yielding such an embedding.
\end{abstract}

\section{Introduction}\label{sec:intro}

Solovay's arithmetical completeness theorem \cite{Solo76}, which states that the modal propositional logic $\GL$ coincides exactly with the provability logic of $\PA$, has become a fundamental cornerstone in the study of provability predicates via modal logic. 
In particular, the method of embedding transitive and conversely well-founded finite Kripke frames into arithmetic by means of the so-called Solovay function remains a central and indispensable technique to this day (see the survey papers \cite{JD98, AB05} on provability logic for several applications of this method).  
In recent years, the authors have established arithmetical completeness results for $\mathbf{K}$ and for several non-normal modal logics. 
For instance, the arithmetical completeness theorem for the modal propositional logic $\mathbf{K}$ with respect to $\Sigma_2$ Fefermanian provability predicates was established in \cite{Kura18}. 
The key technique used in its proof is again an application of Solovay-style constructions. 
In addition, the arithmetical completeness theorem for $\mathbf{K}$ with respect to $\Sigma_1$ provability predicates was proved in \cite{Kura24}. 
An overview of recent studies on arithmetical completeness for several logics is given in \cite{KK}.

It is natural to attempt to extend this framework from propositional logic to predicate logic. 
However, it has been shown that many of the positive results obtained in the propositional setting do not extend to the predicate case. 
For example, Montagna \cite{Mont84} proved that the predicate extension $\QGL$ of $\GL$ is neither Kripke complete nor arithmetically complete. 
Artemov \cite{Arte85} proved that the quantified truth provability logic of $\PA$ is not arithmetical, and Vardanyan \cite{Vard86} proved that the quantified provability logic of $\PA$ is $\Pi^0_2$-complete (see also related results by Berarducci \cite{Bera89}, Boolos and McGee \cite{BM87}, and McGee \cite{McGe94}). 
Furthermore, Artemov \cite{Arte86} showed that the quantified provability logic of $\PA$ depends on the choice of formulas representing $\PA$ (see also \cite{Kura13,Kura22} for related results). 
These results indicate that, unlike in the propositional case, it is nontrivial to control quantified provability logic in a systematic way.

On the other hand, some positive results have been obtained. 
Among them, in this paper, we focus on the work of Artemov and Dzhaparidze, currently spelled Japaridze \cite{AD90}. 
They showed that, for finite Kripke models, Solovay's method can be extended from propositional logic to predicate logic. 
More precisely, they proved that for every transitive and conversely well-founded Kripke model of quantified modal logic with finitely many worlds and finite domains, and for every formula $A$ that is not valid in that model, there exists an arithmetical interpretation $f$ such that $\PA \nvdash f(A)$. 
Moreover, Japaridze \cite{Dzha90} further refined this result by providing more explicit sufficient conditions under which a Kripke model can be embedded into arithmetic.

The aim of the present paper is to combine the method of Artemov and Japaridze \cite{AD90} with the results on $\mathbf{K}$ obtained in \cite{Kura18,Kura24}. 
Our main results are as follows.

\begin{enumerate}
\item
For each conversely well-founded finite Kripke model of quantified modal logic, there exist a $\Sigma_2$ Fefermanian provability predicate of $\PA$ and a corresponding arithmetical interpretation such that the model can be embedded into arithmetic (Theorem \ref{main1}).

\item
For each finite constant domain Kripke model of quantified modal logic, there exist a $\Sigma_1$ provability predicate of $\PA$ satisfying the global version $\DCG{2}$ of the derivability condition $\DC{2}$ and a corresponding arithmetical interpretation such that the model can be embedded into arithmetic (Theorem \ref{main2}). 
\end{enumerate}

The first result can be obtained by a straightforward combination of the arguments of Artemov and Japaridze with those of \cite{Kura18}. 
Therefore, the technical contribution of this paper lies in the second result. 
Unlike the first result, it cannot be obtained merely by combining existing arguments. 
In order to connect quantified modal logic with arithmetic, one must overcome substantial difficulties arising from the treatment of quantifiers and substitutions of numerals. 
Our proof introduces essentially new ideas to address these issues. 

The paper is organized as follows.
In Section~\ref{sec:pre}, we introduce preliminaries on quantified modal logic, provability predicates, and arithmetical interpretations.
In Section~\ref{sec:main1}, we prove the first main theorem stating that conversely well-founded finite Kripke models can be embedded into arithmetic by using $\Sigma_2$ Fefermanian provability predicates.
In Section~\ref{sec:main2}, we establish our second main result stating that finite constant domain Kripke models can be embedded into arithmetic by means of $\Sigma_1$ provability predicates satisfying $\DCG{2}$.
Finally, in the Appendix, we prove the primitive recursiveness of the function $h$, which is used in the construction of the provability predicate in Section~\ref{sec:main2}.
The method developed in this appendix is expected to be useful in the construction of provability predicates whose behavior essentially involves free variables and substitutions.

\section{Preliminaries}\label{sec:pre}

This section consists of three subsections. 
In Subsection \ref{ssec:qml}, we recall some basics of quantified modal logic.
In Subsection \ref{ssec:pp}, we introduce formal arithmetic and provability predicates.
In Subsection \ref{ssec:ai}, we introduce arithmetical interpretations, which connect quantified modal logic and formal arithmetic.
In particular, we explain why we prove the two kinds of theorems mentioned in the introduction.

\subsection{Quantified modal logic}\label{ssec:qml}

The language of quantified modal logic consists of the following symbols:
\begin{itemize}
    \item countably many variables $x,y,z,\ldots$,
    \item for each $n \in \omega$, countably many $n$-ary predicate symbols $P,Q,\ldots$,
    \item logical symbols $\neg,\land,\lor,\to,\forall, \exists$,
    \item logical constants $\top$, $\bot$, 
    \item unary modal operator $\Box$.
\end{itemize}

The modal operator $\Diamond$ is defined by $\Diamond :\equiv \neg \Box \neg$.

\begin{definition}[The logic $\QK$]
The quantified modal logic $\QK$ is defined as follows.

\begin{itemize}
    \item Axioms:
    \begin{itemize}
        \item All axioms of first-order classical logic,
        \item The universal closures of all instances of $\Box(A \to B) \to (\Box A \to \Box B)$.
    \end{itemize}
    
    \item Rules:
    \begin{itemize}
        \item Modus Ponens: from $A$ and $A \to B$, infer $B$,
        \item Generalization: from $A$, infer $\forall x A$,
        \item Necessitation: from $A$, infer $\Box A$.
    \end{itemize}
\end{itemize}
\end{definition}

The quantified modal logic $\QGL$ is obtained from $\QK$ by adding the universal closures of all instances of $\Box(\Box A \to A) \to \Box A$ as axioms.

\begin{definition}[Kripke frames]
A \emph{Kripke frame} for quantified modal logic is a triple $\mathcal{F} = (W,R,\{D_w\}_{w \in W})$, where
\begin{itemize}
    \item $W$ is a nonempty set,
    \item $R$ is a binary relation on $W$,
    \item for each $w \in W$, $D_w$ is a nonempty set,
    \item if $x R y$, then $D_x \subseteq D_y$. 
\end{itemize}
We say a Kripke frame $\mathcal{F}$ is \emph{finite} if $W$ and all $D_w$ are finite sets. 
A Kripke frame $\mathcal{F}$ is said to be \emph{constant domain} if $D_x = D_y$ for all $x, y \in W$. 
When a Kripke frame is constant domain, we write $\mathcal{F} = (W,R,D)$.
\end{definition}

\begin{definition}[Kripke models]
A \emph{Kripke model} is a tuple $\mathcal{M} = (W, R, \{D_w\}_{w \in W}, \Vdash)$, where $\Vdash$ is a relation between elements of $W$ and atomic formulas with parameters from the corresponding domains, extended inductively as follows:
For each $w \in W$, 
\begin{itemize}
    \item $w \nVdash \bot$ and $w \Vdash \top$,
    \item $w \Vdash \neg A \iff w \nVdash A$,
    \item $w \Vdash A \land B \iff w \Vdash A \text{ and } w \Vdash B$,
    \item $w \Vdash A \lor B \iff w \Vdash A \text{ or } w \Vdash B$,
    \item $w \Vdash A \to B \iff w \nVdash A \text{ or } w \Vdash B$,
    \item $w \Vdash \forall x A \iff \forall a \in D_w \, (w \Vdash A(a))$,
    \item $w \Vdash \exists x A \iff \exists a \in D_w \, (w \Vdash A(a))$,
    \item $w \Vdash \Box A \iff \forall v \in W\,(w R v \Rightarrow v \Vdash A)$.
\end{itemize}
\end{definition}

We say that a formula $A$ is \emph{valid} in a model $\mathcal{M}$ if $w \Vdash A$ holds for all $w \in W$, and valid in a frame $\mathcal{F}$ if $A$ is valid in all models based on $\mathcal{F}$.

\begin{definition}[Generated submodel]
Let $\mathcal{M}=(W,R,\{D_w\}_{w\in W},\Vdash)$ be a Kripke model and let $w\in W$.
The \emph{generated submodel} of $\mathcal{M}$ by $w$ is the Kripke model $\mathcal{M}_w = (W_w, R {\upharpoonright} W_w, \{D_v\}_{v\in W_w}, \Vdash \!{\upharpoonright}W_w)$, where $W_w:=\{v\in W \mid wR^{*}v\}$ and $R^{*}$ is the reflexive transitive closure of $R$.
\end{definition}

\begin{lemma}[Generated submodel lemma]\label{gsl}
Let $\mathcal{M}=(W,R,\{D_w\}_{w\in W},\Vdash)$ be a Kripke model, let $w\in W$, and let $A$ be a modal sentence. 
Then $w \Vdash_{\mathcal M} A$ if and only if $w \Vdash_{\mathcal M_w} A$. 
\end{lemma}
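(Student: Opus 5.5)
We prove a stronger statement by induction on the construction of formulas: for every $v \in W_w$ and every formula $A(x_1,\ldots,x_n)$ with parameters $a_1,\ldots,a_n \in D_v$, we have $v \Vdash_{\mathcal M} A(a_1,\ldots,a_n)$ if and only if $v \Vdash_{\mathcal M_w} A(a_1,\ldots,a_n)$. The lemma is then the special case $v = w$ and $A$ a sentence; note $w \in W_w$ because $R^{*}$ is reflexive. We strengthen the claim to all $v \in W_w$ and to formulas with parameters because the modal and quantifier clauses move to other worlds and to instances $A(a)$. First observe that $\mathcal M_w$ is a Kripke model. Its domains are the same $D_v$, so the monotonicity condition $vRu \Rightarrow D_v \subseteq D_u$ is inherited. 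Its forcing relation on atomic formulas at $v \in W_w$ is by definition the restriction of that of $\mathcal M$.

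The steps are as follows. The atomic case and the constants $\top,\bot$ hold by definition of $\Vdash\!{\upharpoonright}W_w$. The propositional connectives follow immediately from the induction hypothesis at the same world $v$. For $\forall x A$ and $\exists x A$ at $v$, both models quantify over the same set $D_v$. So we apply the induction hypothesis to $A(a)$ for each $a \in D_v$, which is legitimate because the induction runs over formulas with arbitrary parameters from $D_v$.

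The only step with real content is the case $\Box A$. Here the key observation is that $W_w$ is closed under $R$: if $v \in W_w$ and $vRu$, then $wR^{*}v$ and $vRu$ give $wR^{*}u$, so $u \in W_w$. Hence the $R$-successors of $v$ in $\mathcal M$ are exactly its $(R{\upharpoonright}W_w)$-successors. Parameters from $D_v$ remain in $D_u$ by the domain inclusion condition, so the induction hypothesis applies to $A$ at each such $u$. Both directions of the equivalence for $\Box A$ then follow. This closure property is the main (if mild) obstacle, and it is the reason the generated submodel uses the reflexive transitive closure $R^{*}$ rather than $R$ itself.
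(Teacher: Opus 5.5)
Your proof is correct. Strengthening the claim to all $v \in W_w$ and to formulas with parameters from $D_v$, then handling the $\Box$ case via the closure of $W_w$ under $R$ and domain monotonicity, is exactly the standard argument; the paper states this lemma as a well-known fact without giving a proof, so there is no different route to compare.
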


A Kripke model $\mathcal{M}=(W,R,\{D_w\}_{w\in W},\Vdash)$ is said to be \emph{rooted} if there exists $r \in W$ such that $W = \{w \mid r R^{*} w\}$.
Such an element $r$ is called a \emph{root} of the model.

\begin{theorem}[cf.~Fitting and Mendelsohn~\cite{FM98}]
The quantified modal logic $\QK$ is Kripke complete.
That is, for any modal sentence $A$, the following are equivalent:
\begin{enumerate}
    \item $\QK \vdash A$. 
    \item $A$ is valid in all Kripke frames.
\end{enumerate}
\end{theorem}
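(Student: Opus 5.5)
Soundness and completeness are proved separately, the completeness half by the standard canonical-model construction for quantified modal logic with expanding domains, as in Fitting and Mendelsohn \cite{FM98}.

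For soundness ($1 \Rightarrow 2$) we induct on the length of a $\QK$-derivation and show that every theorem is true at every world of every Kripke model under every assignment of domain elements to free variables. Instances of first-order axioms are valid world by world. The one place the frame condition matters is universal instantiation $\forall x A \to A(y)$: it needs the value of $y$ to lie in $D_w$. Our semantics evaluates formulas at $w$ only with parameters from $D_w$, and the monotonicity condition $xRy \Rightarrow D_x\subseteq D_y$ ensures that parameters occurring inside a $\Box$ are still available at successor worlds. The $\mathbf{K}$-axiom, with its universal closures, is valid pointwise. Modus Ponens, Generalization and Necessitation preserve validity in all models, because validity quantifies over all worlds and all parameters.

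For completeness ($2 \Rightarrow 1$) we argue contrapositively. Suppose $\QK \nvdash A$. Enlarge the language by countably many new constants, and take as worlds the maximal $\QK$-consistent sets $\Gamma$, each in a language $L_\Gamma$ obtained by adding infinitely many fresh constants (with infinitely many fresh constants still unused), such that $\Gamma$ is \emph{saturated}: every $\exists x B \in \Gamma$ has a witness $B(c)\in\Gamma$ with $c$ a constant of $L_\Gamma$. Put $D_\Gamma$ equal to the set of constants of $L_\Gamma$. Define $\Gamma R \Delta$ iff $L_\Gamma\subseteq L_\Delta$ and $\{B \mid \Box B\in\Gamma\}\subseteq\Delta$. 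Let atomic truth be membership, and prove a truth lemma by induction on formulas. The quantifier cases follow from saturation, together with the fact that $D_\Gamma$ consists exactly of the constants of $L_\Gamma$. The $\Box$ case requires an existence lemma: if $\neg\Box B\in\Gamma$, then some $\Delta$ with $\Gamma R\Delta$ contains $\neg B$. Finally, extend $\{\neg A\}$ to a saturated maximal consistent set via a Lindenbaum-plus-Henkin construction. The resulting model refutes $A$, and its frame satisfies the domain inclusion condition by construction.

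The main obstacle is the existence lemma. To build $\Delta$ we start from $\{B\mid\Box B\in\Gamma\}\cup\{\neg B\}$ and extend it to a saturated set, adding Henkin witnesses with fresh constants not in $L_\Gamma$. The difficulty is showing that consistency is preserved at each witness step. Suppose adding $C(d)$ for a new $d$ were inconsistent. Then $\QK\vdash \bigwedge\Box\text{-contents}\wedge\neg B \to \neg C(d)$. By Generalization on $d$ (legitimate because $d$ is fresh), then Necessitation and the $\mathbf{K}$-axiom, we must pull this back into $\Gamma$. At this point we need $\forall x\Box \to \Box\forall x$ (the Barcan formula), which is not in $\QK$. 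We avoid it by using fresh constants only where the existential is inside the successor and never universally quantifying into $\Gamma$. The argument treats the witness $d$ as a new constant over which $\Gamma$ asserts nothing, so the derivation above yields $\Box\neg\exists x(\ldots)$ directly from a $\QK$ proof of the form $\Box(\ldots\to\forall x\neg C)$, using only Generalization inside the box. Checking that this works uniformly is the step I expect to require the most care. Everything else is routine.
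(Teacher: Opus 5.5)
The paper gives no proof of this cited result. Your architecture is the standard one for expanding domains: soundness by induction on derivations, and a canonical model of saturated maximal consistent sets in languages $L_\Gamma$ with fresh constants, with $L_\Gamma\subseteq L_\Delta$ built into $R$. Soundness, the truth lemma and the root construction are fine.

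The problem is the existence lemma. You rightly call it the heart of the matter, but you give no valid argument for it.

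\begin{itemize}
\item Your claim that generalizing on $d$ and then applying Necessitation and $\mathbf{K}$ forces the Barcan formula is false. Generalizing first and necessitating afterwards, which yields $\Box(\cdots\to\forall x\neg C)$, is perfectly legitimate in $\QK$. Your proposed remedy just repeats this same order of steps, so both the obstruction and the fix are illusory.
\item Your displayed derivation drops the witness formulas added at earlier stages, which contain earlier fresh constants. It also never says where $\exists xC$ sits, so nothing is actually contradicted.
\item If you did want to pull back into $\Gamma$, you would need an invariant like $\Diamond\exists\vec y\,(\cdots)\in\Gamma$ with the earlier fresh constants quantified away. Formulas containing those constants are not in $L_\Gamma$ at all.
\end{itemize}

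The Barcan formula is needed only when witnesses must be drawn from constants already in $L_\Gamma$, i.e.\ in the constant-domain canonical model for $\QKbf$. You have conflated the two situations.

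In fact no return to $\Gamma$ is needed. Write $\Gamma^{\Box}=\{E\mid\Box E\in\Gamma\}$.

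\begin{itemize}
\item The only modal step is the initial one: $\Gamma^{\Box}\cup\{\neg B\}$ is consistent. Otherwise $\vdash\gamma_1\wedge\cdots\wedge\gamma_k\to B$ with $\gamma_i\in\Gamma^{\Box}$, and Necessitation plus $\mathbf{K}$ give $\Box B\in\Gamma$.
\item Thereafter maintain only that $\Sigma_n=\Gamma^{\Box}\cup\{\neg B,\sigma_1,\ldots,\sigma_n\}$ is consistent.
\item To witness $\exists xC\in\Sigma_n$, pick $d$ outside $L_\Gamma$ and outside the finitely many constants occurring in $\sigma_1,\ldots,\sigma_n,C$. It must be fresh relative to earlier witnesses too, not just relative to $L_\Gamma$.
\item Suppose $\Sigma_n\cup\{C(d)\}$ were inconsistent. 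Then $\vdash\gamma\wedge\neg B\wedge\sigma\to\neg C(d)$, where $\gamma$ is a finite conjunction of members of $\Gamma^{\Box}$ and $\sigma$ one of the $\sigma_i$. Here $d$ occurs nowhere in the antecedent; this is exactly where ``$\Gamma$ asserts nothing about $d$'' is used.
\item Generalization and first-order logic then give $\vdash\gamma\wedge\neg B\wedge\sigma\to\neg\exists xC$, contradicting the consistency of $\Sigma_n$.
\end{itemize}

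With this step, and with $L_\Delta$ taken to be $L_\Gamma$ plus an infinite, co-infinite set of the remaining constants, the rest of your proof goes through.
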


Let $\QKbf$ be the logic obtained from $\QK$ by adding the universal closures of all instances of Barcan formula $\forall x \Box A(x) \to \Box \forall x A(x)$ as axioms.

\begin{theorem}[cf.~Fitting and Mendelsohn~\cite{FM98}]
The quantified modal logic $\QKbf$ is Kripke complete with respect to constant domain Kripke frames.
That is, for any modal sentence $A$, the following are equivalent:
\begin{enumerate}
    \item $\QKbf \vdash A$. 
    \item $A$ is valid in all constant domain Kripke frames.
\end{enumerate}
\end{theorem}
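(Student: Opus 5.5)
We prove the two directions separately, following the canonical model method for quantified modal logic with constant domains.

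\emph{Soundness} ($1 \Rightarrow 2$). We argue by induction on the length of a $\QKbf$-derivation. On any Kripke frame, the axioms of $\QK$ are valid and the rules preserve validity; this is the soundness half of the preceding theorem for $\QK$. It remains to check the Barcan formula on a constant domain frame $(W,R,D)$. Suppose $w \Vdash \forall x \Box A(x)$ (with parameters from $D$), and let $wRv$ and $a \in D_v = D$. Since $a \in D = D_w$, we have $w \Vdash \Box A(a)$, and therefore $v \Vdash A(a)$. As $a$ was arbitrary, $v \Vdash \forall x A(x)$, so $w \Vdash \Box \forall x A(x)$. Universal closures are handled in the same way, because the domain is the same at every world.

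\emph{Completeness} ($2 \Rightarrow 1$). We argue contrapositively: assume $\QKbf \nvdash A$ and build a constant domain countermodel. Extend the language by a countably infinite set $C$ of new constants, and define the canonical model as follows.
\begin{itemize}
\item $W$ is the set of maximal $\QKbf$-consistent sets $\Gamma$ of sentences of the extended language that are \emph{$C$-saturated}: whenever $\exists x B(x) \in \Gamma$, we have $B(c) \in \Gamma$ for some $c \in C$.
\item $\Gamma R \Delta$ holds if and only if $\{B \mid \Box B \in \Gamma\} \subseteq \Delta$.
\item The domain is $D = C$ at every world, and atomic truth is given by membership.
\end{itemize}
One Lindenbaum-style construction, witnessing existentials by fresh constants, puts $\neg A$ into some saturated $\Gamma_0$. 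Here we may assume, after splitting the constants into two infinite halves, that infinitely many constants remain unused.

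The truth lemma, $\Gamma \Vdash B \iff B \in \Gamma$, is proved by induction on $B$. The propositional cases are routine. The quantifier cases use saturation together with the fact that the domain is exactly $C$, so all of $C$ is available at every world. The modal case requires the \emph{existence lemma}: if $\neg\Box B \in \Gamma$, then there is a saturated $\Delta$ with $\Gamma R \Delta$ and $\neg B \in \Delta$.

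This existence lemma is the main obstacle. The new world must be saturated using only the constants of $C$, with no fresh ones, because the domain has to stay constant. We would enumerate the existential sentences $\exists x E_n(x)$ and build $\Delta$ in stages, keeping the invariant that the $\Box$-theory of $\Gamma$ together with finitely many added formulas $\Theta_n$ remains consistent. At stage $n$, suppose that for every $c \in C$ the set obtained by adding $\exists x E_n(x) \to E_n(c)$ were inconsistent. Then $\Gamma \ni \Box\neg\bigwedge(\Theta_n \land (\exists x E_n \to E_n(c)))$ for all $c$. Since $\Gamma$ is saturated, this gives $\Gamma \ni \forall y\, \Box\neg(\ldots(y))$; note that the formulas in $\Theta_n$ may mention constants, which we generalize away where needed. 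The Barcan formula then yields $\Box\forall y\neg(\ldots)$. But $\forall y\neg(\exists x E_n \to E_n(y))$ is refutable in classical logic, so this produces $\Box\neg\bigwedge\Theta_n \in \Gamma$, contradicting the invariant. The point at which $\Gamma$ passes from ``for all $c$'' to $\forall y$ uses the $\omega$-completeness-like property of saturated sets. This holds because a saturated set contains $\forall y\, D(y)$ whenever it contains $D(c)$ for all $c$: otherwise it would contain $\exists y\neg D(y)$, and saturation would supply a witness $c$ with $\neg D(c)$.

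Once the existence lemma is in place, the truth lemma gives $\Gamma_0 \nVdash A$ in a constant domain Kripke model. Hence $A$ is not valid in all constant domain frames, which completes the contrapositive.
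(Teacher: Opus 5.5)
The paper gives no proof of this theorem and only cites Fitting and Mendelsohn. Your argument is the standard Henkin-style canonical model proof for constant domains, with the Barcan formula used exactly where it is needed (saturating successor worlds using only the constants already in $C$), and it is correct. Two of your precautions are unnecessary but harmless: keeping infinitely many constants unused is not needed, because the existence lemma never calls for fresh witnesses; and the constants occurring in $\Theta_n$ can simply stay as constants in $\Box\neg(\bigwedge\Theta_n\land(\exists x E_n(x)\to E_n(y)))$, since the required Barcan instance with constants follows from the universal-closure axiom by instantiation.
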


The following theorem is a consequence of one of Wolter and Zakharyaschev's results \cite{WZ01}. 

\begin{theorem}[Wolter and Zakharyaschev~{\cite[Corollary~5.7]{WZ01}}]\label{thm:WZ}
Let $A$ be a modal sentence in which only one variable appears. 
Then, the following are equivalent:
\begin{enumerate}
    \item $\QKbf \vdash A$. 
    \item $A$ is valid in all finite constant domain Kripke frames.
\end{enumerate}
\end{theorem}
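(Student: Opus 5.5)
The direction from (1) to (2) is soundness. $\QKbf$ is sound for constant domain Kripke frames by the previous theorem, and finite constant domain frames are a special case. So every $\QKbf$-theorem is valid in all finite constant domain frames. The real content is the converse: if $\QKbf \nvdash A$ for a one-variable sentence $A$, then $A$ fails in some finite constant domain model. By the completeness theorem for $\QKbf$ there is a constant domain countermodel $\mathcal{M}=(W,R,D,\Vdash)$ with $w_0 \nVdash A$ for some $w_0 \in W$. This model may be infinite in both $W$ and $D$, and the task is to make both finite while keeping the failure of $A$.

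The plan is a filtration-style argument adapted to one-variable formulas, as in Wolter and Zakharyaschev's work on monodic fragments.

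\textbf{Step 1: translate to a product-like setting.} A one-variable formula is a formula of the two-dimensional logic $\mathbf{K}\times\mathbf{S5}$. Read $\forall x$ as the $\mathbf{S5}$-box over the domain and $\Box$ as the $\mathbf{K}$-box. Constant domains and the Barcan formula correspond exactly to the commutativity of the two boxes in product frames $W\times D$. Let $\Sigma$ be the finite set of subformulas of $A$, written with the single variable $x$. For each world $w$ and each $d\in D$, let $t(w,d)=\{B\in\Sigma \mid w\Vdash B(d)\}$ be the type of $d$ at $w$, and let the quasistate of $w$ be $q(w)=\{t(w,d)\mid d\in D\}$. There are only finitely many possible quasistates.

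\textbf{Step 2: build a finite model from quasistates and runs.} Take as new worlds the quasistates realized in $\mathcal{M}$, or their finite unravelling. For each world we also need witnesses: for every pair of a type and a $\Diamond$-formula inside it, some $R$-successor must realize a suitable type. A "run" is a function that picks, for each world, a type of a single element consistently along $R$. Choose finitely many runs, enough to witness every $\exists x$ and every $\Diamond$ demand coming from $\Sigma$, in the style of a mosaic or quasimodel construction. Let the new domain $D'$ be this finite set of runs. This makes $D'$ constant, and that is exactly where the Barcan formula is used.

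\textbf{Step 3: truth lemma.} Prove by induction on $B\in\Sigma$ that $(q, \rho)\Vdash B$ if and only if $B\in \rho(q)$. The quantifier case uses the fact that the chosen runs realize every type in each quasistate. The $\Box$ case uses the requirement that runs follow $R$-witnesses.

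The main obstacle is Step 2. After identifying worlds with finitely many quasistates, we must still pick finitely many runs that serve as witnesses at every world simultaneously. A naive choice needs infinitely many runs, because each world may demand new $\Diamond$-witnesses for each element. I would first try the standard trick: unravel $R$ into a tree of finite depth, bounded by the modal depth of $A$, with finite branching bounded by the number of $\Diamond$-subformulas times the number of types. On such a finite tree, each type at the root needs only finitely many extensions into runs, so a bounded number of runs per type suffices. Since this is precisely Wolter and Zakharyaschev's Corollary 5.7, in the paper itself I would cite their quasimodel theorem rather than redo this step.
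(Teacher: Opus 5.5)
Your proposal agrees with the paper, which gives no argument of its own. There, (1)$\Rightarrow$(2) is the soundness half of the constant-domain completeness theorem for $\QKbf$, and (2)$\Rightarrow$(1) is taken directly from Wolter and Zakharyaschev's Corollary~5.7, which is where you also end up. Be aware, though, that your Steps~2--3 would not stand on their own as written: on the pruned finite tree the difficulty is not the number of runs (there are trivially finitely many) but the existence of a coherent, $\Diamond$-saturated run through every pair $(w,t)$ with $t\in q(w)$, because pruning destroys the saturation of the natural runs $w\mapsto t(w,d)$, so runs must be glued from witnesses chosen for other elements of the same type, with at least two distinct witness successors kept per demand so that the branch leading to $w$ cannot block an ancestor's $\Diamond$-demand.
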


A Kripke frame is said to be \emph{conversely well-founded} if it has no infinite $R$-increasing chains of elements.
As in the propositional case, a Kripke frame is transitive and conversely well-founded if and only if every modal sentence that is a theorem of $\QGL$ is valid in that frame.
On the other hand, the following fact is known.

\begin{theorem}[Montagna \cite{Mont84}]
The quantified modal logic $\QGL$ is Kripke incomplete.
That is, there exists a modal sentence which is valid in every
transitive and conversely well-founded Kripke frame,
but is not provable in $\QGL$.
\end{theorem}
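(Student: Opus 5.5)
Montagna's theorem asks for a single modal sentence $A$ that is valid in every transitive conversely well-founded Kripke frame but is not derivable in $\QGL$. The plan is to show the two halves separately: semantic validity of a specific sentence, and its syntactic underivability. A natural choice of $A$ is the quantified ``Barcan-like'' principle
\[
\forall x\,\Box P(x) \to \Box\forall x\,\Box P(x),
\]
or a variant such as $\Box\forall x\,\Box P(x) \to \Box \forall x \Box\Box P(x)$, whose validity on frames rests on transitivity and increasing domains. I would settle on the form used by Montagna: a sentence built from the L\"ob scheme applied under a universal quantifier, such as
\[
\forall x\,\Box(\Box P(x)\to P(x)) \to \Box\forall x\, P(x)\ \text{ combined with } \Box\forall x\,\Box P(x)\to\Box\forall x\,\Box\Box P(x),
\]
and I would pick whichever instance makes both halves go through.

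\emph{Validity.} Fix a transitive, conversely well-founded frame $(W,R,\{D_w\})$ and any model on it. Verifying $A$ at an arbitrary $w$ is a direct semantic computation. The proof uses two facts. Transitivity gives $v\Vdash\Box B \Rightarrow u\Vdash B$ for all $u$ reachable from $v$. Converse well-foundedness lets us argue by induction along $R^{-1}$: take an $R$-maximal counterexample world and derive a contradiction, exactly as in the propositional soundness proof of L\"ob's axiom. The domain inclusion $D_x\subseteq D_y$ handles the quantifier step. This part should be routine.

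\emph{Underivability.} This is the main obstacle, since $\QGL$ is not complete for any natural Kripke class, so we cannot refute $A$ by a Kripke countermodel. My first attempt is a non-Kripke semantics that is sound for $\QGL$ but falsifies $A$. Concretely, I would build a model in which $\Box$ is read as ``holds at all $R$-successors'' over a frame that is transitive but \emph{not} conversely well-founded, for example $\omega$ with $>$ reversed. On such a frame L\"ob fails in general. I would therefore restrict the admissible valuations to a family of ``definable'' sets, for instance sets that are eventually periodic or eventually constant along the chain, uniformly in the domain parameter. This is a general-frame construction, and the family must be closed under the Boolean operations, quantifiers and $\Box$. Inside this family every instance of $\Box(\Box B\to B)\to\Box B$ holds, because eventual constancy simulates a last point. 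However, the varying-domain quantifier interaction, with domains growing along the chain without bound, breaks the sentence $A$.

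The delicate checks are closure of the admissible family under $\forall$ and soundness of the L\"ob axiom for formulas with free variables. Failing that, I would fall back on Montagna's arithmetical route. That route interprets $\Box$ as $\PA$-provability with predicates sent to arithmetical formulas, so that $\QGL$ is sound (L\"ob's theorem formalized, uniformly in numerals). One then finds a specific arithmetical interpretation under which $A$ becomes false in $\mathbb N$, using a formula $P(x)$ whose instances are individually provable but not provably uniformly. This shows $\QGL\nvdash A$.
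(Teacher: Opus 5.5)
The paper gives no proof of this theorem; it only cites Montagna. Your proposal therefore has to stand on its own, and it has a genuine gap at its core: you never fix the sentence $A$, and the candidates you name do not work.

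Consider the frame $W=\{w,v,u\}$ with $wRv$, $vRu$, $wRu$, and $D_w=\{0\}$, $D_v=D_u=\{0,1\}$. Let $P(0)$ be true everywhere and let $u\nVdash P(1)$. This frame is transitive and conversely well-founded. At $w$ the antecedents of $\forall x\,\Box P(x)\to\Box\forall x\,\Box P(x)$ and of $\forall x\,\Box(\Box P(x)\to P(x))\to\Box\forall x\,P(x)$ both hold, since they only speak about $0$. But $v\nVdash\Box P(1)$ and $u\nVdash\forall x\,P(x)$, so both sentences fail at $w$. These Barcan-type principles hold on constant-domain frames, not on the expanding-domain frames the theorem ranges over.

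Your remaining candidate, $\Box\forall x\,\Box P(x)\to\Box\forall x\,\Box\Box P(x)$, is frame-valid but derivable in $\QGL$. The usual $\GL$-derivation of $\Box p\to\Box\Box p$ goes through verbatim with a free variable, and generalization, necessitation and the $\mathbf{K}$ axiom finish it. No Boolean combination of these candidates helps either. So the ``routine'' validity half is never carried out for any sentence that could also be underivable.

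The underivability half fails as written. If your frame is $\omega$ ordered by $<$, every point has a successor. Then the L\"ob instance $\Box(\Box\bot\to\bot)\to\Box\bot$ is false at every point, whatever family of admissible sets you allow, because the extension of $\bot$ is fixed. ``Eventual constancy simulates a last point'' is true for eventually true sets but false for eventually false ones, and any Boolean-closed family contains those. If you meant $>$, the frame is conversely well-founded, so a frame-valid $A$ is true there under every valuation, admissible or not.

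Any general frame for $\QGL$ must make every point with a successor see a dead end. On such frames, the two things you leave open are exactly the nontrivial checks: closure of the admissible family under $\forall$ over growing domains, and soundness of L\"ob's axiom with parameters.

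The arithmetical fallback has the right logical shape. Since $\QGL\subseteq\QPL(\Prov_T)$ (Subsection~\ref{ssec:ai}), a frame-valid $A$ together with an interpretation $f$ such that $\PA\nvdash f(A)$ would suffice. But $\omega$-incompleteness alone cannot provide the separation. An expanding-domain Kripke model already realizes $\forall x\,\Box P(x)\land\neg\Box\forall x\,P(x)$ by adding a fresh element falsifying $P$ at a successor world.

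What is missing is the whole substance of Montagna's theorem:
\begin{itemize}
\item a specific sentence that exploits a feature of Kripke frames which arithmetic lacks;
\item a proof that it is valid on all transitive conversely well-founded frames;
\item a proof that it is underivable, for instance an explicit interpretation refuting it.
\end{itemize}
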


\subsection{Provability predicates and derivability conditions}\label{ssec:pp}

Let $\LA = \{0,\su,+,\times,<\}$ be the language of first-order arithmetic.
For each $n \in \omega$, let $\num{n}$ denote the numeral
$\su^n(0)$ expressing $n$, which is the result of $n$ applications of $\su$ to $0$.
We fix a standard G\"odel numbering $\gn{\varphi}$ of $\LA$-formulas $\varphi$.

Throughout the present paper, let $T$ denote a consistent primitive recursive extension of Peano Arithmetic $\PA$ in the language $\LA$.
An $\LA$-formula $\PR_T(x)$ is called a \emph{provability predicate} of $T$ if for any $\LA$-sentence $\varphi$, $T \vdash \varphi$ if and only if $\PA \vdash \PR_T(\gn{\varphi})$.
Note that we do not assume that $\PR_T(x)$ is $\Sigma_1$ in general.
We fix a primitive recursive formula $\Proof_T(x, y)$ naturally expressing the relation ``$y$ is a $T$-proof of $x$''. 
Then, the $\Sigma_1$ formula $\Prov_T(x) : \equiv \exists y\, \Proof_T(x, y)$ is a provability predicate of $T$. 

In some situations, one requires that $\PA$ verifies that a provability predicate $\PR_T(x)$ satisfies certain natural properties of $T$-provability. 
For instance, in the usual proof of the second incompleteness theorem, it suffices that the following conditions hold.

\begin{definition}[Local derivability conditions]
\leavevmode
\begin{description}
    \item [$\DC{2}$] $\PA \vdash
    \PR_T(\gn{\varphi \to \psi})
    \to
    (\PR_T(\gn{\varphi}) \to \PR_T(\gn{\psi}))$.

    \item [$\DC{3}$] $\PA \vdash \PR_T(\gn{\varphi})
    \to
    \PR_T(\gn{\PR_T(\gn{\varphi})})$.
\end{description}
\end{definition}

\begin{theorem}[Formalized L\"ob's theorem]
Suppose that $\PR_T(x)$ satisfies $\DC{2}$ and $\DC{3}$.
Then, for any $\LA$-sentence $\varphi$, $\PA \vdash \PR_T(\gn{\PR_T(\gn{\varphi}) \to \varphi}) \to \PR_T(\gn{\varphi})$.
\end{theorem}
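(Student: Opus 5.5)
The plan is the standard argument via the diagonal lemma, using only $\DC{2}$ and $\DC{3}$; no $\DC{1}$ is needed beyond what is built into the definition of a provability predicate. Fix an $\LA$-sentence $\varphi$. By the diagonal lemma, choose a sentence $\psi$ with
\[
\PA \vdash \psi \leftrightarrow (\PR_T(\gn{\psi}) \to \varphi).
\]
Since $T$ extends $\PA$, the left-to-right direction $\psi \to (\PR_T(\gn{\psi}) \to \varphi)$ is provable in $T$. By the defining property of a provability predicate, this gives
\[
\PA \vdash \PR_T(\gn{\psi \to (\PR_T(\gn{\psi}) \to \varphi)}).
\]
This is the only place the definition (local necessitation) is used.

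Next, apply $\DC{2}$ twice. This yields $\PA \vdash \PR_T(\gn{\psi}) \to (\PR_T(\gn{\PR_T(\gn{\psi})}) \to \PR_T(\gn{\varphi}))$. Combining this with $\DC{3}$ for $\psi$, namely $\PA \vdash \PR_T(\gn{\psi}) \to \PR_T(\gn{\PR_T(\gn{\psi})})$, we obtain
\[
\PA \vdash \PR_T(\gn{\psi}) \to \PR_T(\gn{\varphi}). \tag{$*$}
\]

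Now reason inside $\PA$ and assume $\PR_T(\gn{\PR_T(\gn{\varphi}) \to \varphi})$. From $(*)$ we know $\PA \vdash (\PR_T(\gn{\varphi}) \to \varphi) \to (\PR_T(\gn{\psi}) \to \varphi)$, which is a $\PA$-provable sentence. By the diagonal equivalence, $\PA \vdash (\PR_T(\gn{\varphi}) \to \varphi) \to \psi$, hence this is also provable in $T$. By the definition of provability predicate and $\DC{2}$, the assumption yields $\PR_T(\gn{\psi})$, and then $(*)$ gives $\PR_T(\gn{\varphi})$. Discharging the assumption proves the formalized statement.

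The only delicate point is bookkeeping. Every time we pass from a $\PA$-theorem $\chi$ to $\PA \vdash \PR_T(\gn{\chi})$, we must use that $T \supseteq \PA$ together with the defining property of $\PR_T$. That property is stated for sentences only, and all formulas involved here are indeed sentences. Since $\PR_T$ need not be $\Sigma_1$, we cannot use provable $\Sigma_1$-completeness, so the argument must route every such step through $\DC{3}$ and the external necessitation, exactly as above.
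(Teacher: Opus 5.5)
Your proof is correct. It is the standard diagonal-lemma argument for L\"ob's theorem: take the fixed point $\psi \leftrightarrow (\PR_T(\gn{\psi}) \to \varphi)$, use $\DC{2}$ twice and $\DC{3}$ once to obtain $\PR_T(\gn{\psi}) \to \PR_T(\gn{\varphi})$, then internalize $(\PR_T(\gn{\varphi}) \to \varphi) \to \psi$. Each necessitation step correctly goes through the defining property of a provability predicate for sentences. The paper states this theorem as a known fact without proof, and your argument is the standard one it relies on.
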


Moreover, the standard proof that a provability predicate satisfies $\DC{3}$ usually proceeds by establishing a stronger property, namely the formalized $\Sigma_1$-completeness $\SCU$ with parameters.
The derivability conditions with parameters are called the uniform derivability conditions.

\begin{definition}[Uniform derivability conditions]
\leavevmode
\begin{description}
    \item [$\DCU{1}$]
    if $T \vdash \forall \ora{x}\,\varphi(\ora{x})$, then $\PA \vdash \forall \ora{x}\,\PR_T(\gn{\varphi(\ora{\dot{x}})})$.
    
    \item [$\DCU{2}$] $\PA \vdash \forall \ora{x}\,
    \bigl(
    \PR_T(\gn{\varphi(\ora{\dot{x}})\to \psi(\ora{\dot{x}})})
    \to
    (\PR_T(\gn{\varphi(\ora{\dot{x}})}) \to \PR_T(\gn{\psi(\ora{\dot{x}})}))
    \bigr)$. 

    \item [$\DCU{3}$] $\PA \vdash \forall \ora{x}\,
    \bigl(
    \PR_T(\gn{\varphi(\ora{\dot{x}})})
    \to
    \PR_T(\gn{\PR_T(\gn{\varphi(\ora{\dot{x}})})})
    \bigr)$.
    
    \item [$\SCU$]
    if $\varphi(\ora{x})$ is a $\Sigma_1$-formula, then $\PA \vdash \forall \ora{x}\,
    \bigl(
    \varphi(\ora{x}) \to \PR_T(\gn{\varphi(\ora{\dot{x}})})
    \bigr)$.
\end{description}
Here, $\ora{x}$ is a finite tuple of variables. 
So, $\gn{\varphi(\ora{\dot{x}})}$ is an abbreviation for $\gn{\varphi(\dot{x}_0, \ldots, \dot{x}_{m-1})}$ that is a primitive recursive term corresponding to a primitive recursive function calculating the G\"odel number of $\gn{\varphi(\num{n_0}, \ldots, \num{n_{m-1}})}$ from $(n_0, \ldots, n_{m-1})$. 
\end{definition}

The strongest version of $\DC{2}$ is the following global one. 

\begin{definition}[Global derivability condition]
\leavevmode
\begin{description}
   \item [$\DCG{2}$] $\PA \vdash \forall x \forall y \bigl(\PR_T(x \dot{\to} y) \to (\PR_T(x) \to \PR_T(y)) \bigr)$.
\end{description}
Here $\dot{\to}$ is a primitive recursive term corresponding to a primitive recursive function calculating the G\"odel number of $\varphi \to \psi$ from those of $\varphi$ and $\psi$. 
\end{definition}

For a systematic study of provability predicates and various derivability conditions, see \cite{Kura20, Kura25}. 
For instance, the following theorem was presented in Buchholz's lecture note (cf.~\cite{Kura20,Kura25}). 

\begin{theorem}\label{Buc}
If $\PR_T(x)$ satisfies $\DCU{1}$ and $\DCU{2}$, then it satisfies $\SCU$.
\end{theorem}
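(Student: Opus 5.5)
We prove Theorem~\ref{Buc} by induction on the construction of $\Sigma_1$ formulas, after first reducing to a convenient normal form. Fix a $\Sigma_1$ formula $\varphi(\ora{x})$. Over $\PA$, $\varphi(\ora{x})$ is provably equivalent to $\exists y\, \delta(\ora{x}, y)$, where $\delta$ is $\Delta_0$ (or even an equation of polynomial terms). The equivalence $\forall \ora{x}\,(\exists y\,\delta(\ora{x},y) \to \varphi(\ora{x}))$ is a theorem of $\PA$, hence of $T$. By $\DCU{1}$ we get $\PA \vdash \forall \ora{x}\,\PR_T(\gn{\exists y\,\delta(\ora{\dot x},y) \to \varphi(\ora{\dot x})})$. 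By $\DCU{2}$ it then suffices to show $\PA \vdash \forall\ora{x}\,(\exists y\,\delta(\ora{x},y) \to \PR_T(\gn{\exists y\,\delta(\ora{\dot x},y)}))$.

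The same trick disposes of the existential quantifier. Since $\forall \ora{x}\,\forall y\,(\delta(\ora{x},y) \to \exists y\,\delta(\ora{x},y))$ is logically valid, $\DCU{1}$ and $\DCU{2}$, applied with the parameter tuple $(\ora{x},y)$, reduce the problem to formalized $\Delta_0$-completeness with parameters: $\PA \vdash \forall \ora{x}\,\forall y\,(\delta(\ora{x},y) \to \PR_T(\gn{\delta(\ora{\dot x},\dot y)}))$. Note that the uniform conditions really are needed here, because $y$ occurs as a dotted parameter.

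For the $\Delta_0$ case we argue by induction on the complexity of $\delta$, proving the statement for both $\delta$ and $\neg\delta$ simultaneously (or working in negation normal form with atomic and negated atomic formulas).

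\begin{itemize}
\item \emph{Atomic case.} For $t(\ora{x})=s(\ora{x})$ or $t<s$, the key fact is that $\PA$ proves $\forall \ora{x}\,(t(\ora{x}) = z \to \ldots)$. The cleanest route is to reduce everything to one atomic shape using $\DCU{1}$/$\DCU{2}$. Concretely, $\forall x\forall z\,(\PR_T(\gn{\dot x=\dot x}))$ follows from $\DCU{1}$ applied to $\forall x\,(x=x)$. Then $t(\ora{x})=s(\ora{x})$ with parameters is handled through the valid implication $\forall\ora{x}\,\forall z\,(z=z \to (t(\ora x)=z\to\ldots))$. This is awkward, so I would instead first reduce $\delta$ to a normal form in which atomic formulas are $x=y$, $x+y=z$, $x\cdot y=z$, $\su x=y$, $x<y$, together with their negations. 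For instance, $x+y=z$ holds iff $z$ equals $x+y$, and $\PR_T(\gn{\dot x+\dot y=\dot x+\dot y})$ follows from $\DCU{1}$. Each such atomic case then follows from $\DCU{1}$ applied to a trivial identity, after substituting the equal value.
\item \emph{Connectives.} Conjunction and disjunction use $\DCU{1}$ on tautologies such as $\varphi\to(\psi\to\varphi\land\psi)$, combined with $\DCU{2}$.
\item \emph{Bounded quantifiers.} For $\forall u<x\,\theta$ we do an induction on $x$ inside $\PA$. The base case is $\DCU{1}$ on $\forall u<0\,\theta$. The step uses $\DCU{1}$ on $\forall u<x\,\theta \land \theta(x)\to\forall u<\su x\,\theta$, together with $\DCU{2}$ and the induction hypothesis. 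The bounded existential case is like the unbounded existential case above, plus $u<x$.
\end{itemize}

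The main obstacle is the atomic step. Here the substitution of numerals for parameters must be handled purely through $\DCU{1}$ and $\DCU{2}$, with no access to the internal structure of $\PR_T$. The device of replacing the atomic formula $x=y$ by its instance $x=x$ (using the premise $x=y$ in $\PA$ to rewrite the dotted numerals) is what makes it work. Getting the formula reduction so that only such trivial identity instances appear is where the care lies.
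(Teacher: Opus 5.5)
Your outer structure is the right one: the reduction to $\exists y\,\delta$, pushing $\exists$ through via $\DCU{1}$ on $\delta(\ora{x},y)\to\exists y\,\delta(\ora{x},y)$ plus $\DCU{2}$, the connective cases, and the internal induction for $\forall u<x$. However, the base case is not established, and that is where all the arithmetic content lies. The claim that $x+y=z$ follows from $\DCU{1}$ applied to $x+y=x+y$ ``after substituting the equal value'' is false. Under the hypothesis $z=x+y$, equality reasoning in $\PA$ only rewrites the \emph{argument} of the dotting term. So $\gn{\dot x+\dot y=\dot z}$ becomes the code of $\num{a}+\num{b}=\num{a+b}$. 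By contrast, $\DCU{1}$ on $\forall x\forall y\,(x+y=x+y)$ only gives $\PR_T$ of the code of $\num{a}+\num{b}=\num{a}+\num{b}$. The closed term $\num{a}+\num{b}$ and the numeral $\num{a+b}\equiv\su^{a+b}(0)$ are different expressions. Relating them uniformly in $a,b$ requires iterating $\DCU{2}$ a number of times depending on $b$. A cleverer normal form does not help, since atomic formulas with $+$ and $\cdot$ cannot be eliminated. Your device of replacing $x=y$ by the instance $x=x$ works only for $x=y$ and, using the syntactic identity $\num{a+1}\equiv\su\num{a}$, for $\su x=y$. It fails in the same way for $x\cdot y=z$, for $x<y$ (on which your bounded-$\exists$ case also relies), and for every negated atomic formula: $\num{a}\neq\num{b}$ is not an instance of any trivial identity.

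The missing idea is an induction carried out inside $\PA$. For addition, prove $\PA\vdash\forall y\,\forall x\,\forall z\,\bigl(x+y=z\to\PR_T(\gn{\dot x+\dot y=\dot z})\bigr)$ by induction on $y$.
The base case is $\DCU{1}$ for $\forall x\,(x+0=x)$, together with $\num{0}\equiv 0$.
For the step, apply $\DCU{1}$ to $\forall x\forall y\forall z\,(x+y=z\to x+\su y=\su z)$, then $\DCU{2}$ with the induction hypothesis. Finally, use the $\PA$-provable identity $\num{c+1}\equiv\su\num{c}$ (for $c=y$ and $c=z$) to see that the code of $\num{x}+\su\num{y}=\su\num{z}$ is that of $\num{x}+\num{y+1}=\num{z+1}$.

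Multiplication goes the same way from $x\cdot 0=0$ and $x\cdot y=z\land z+x=w\to x\cdot\su y=w$, using the addition case. Then $x<y$ and the negated atomic formulas are handled as in your first step: transfer by $\DCU{1}$ and $\DCU{2}$ from positive $\Sigma_1$ equivalents such as $\exists w\,(x+\su w=y)$, $x<y\lor y<x$, and $\exists w\,(y+w=x)$, unnested as needed. It is therefore simpler to drop negation normal form and work with the class generated from unnested positive atomic formulas by $\land$, $\lor$, $\exists$, and $\forall u<x$. Your bounded-$\forall$ induction does not need $\theta$ to be $\Delta_0$, though it also silently uses $\num{x+1}\equiv\su\num{x}$. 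With these additions you get the standard proof of formalized $\Sigma_1$-completeness, which indeed uses only $\DCU{1}$ and $\DCU{2}$. The paper itself gives no proof of this statement and only refers to Buchholz's lecture notes.
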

Although Theorem \ref{Buc} shows that $\DCU{1}$ and $\DCU{2}$ imply 
$\SCU$, the assumption $\DCU{3}$ in the next theorem 
is not redundant, since a provability predicate to which the theorem is applied need not be $\Sigma_1$.
\begin{theorem}[Formalized L\"ob's theorem with parameters]\label{ULob}
Suppose that $\PR_T(x)$ satisfies $\DCU{1}$, $\DCU{2}$, and $\DCU{3}$. 
Then, for any formula $\varphi(\ora{x})$,
\[
\PA \vdash
\forall \ora{x}\Bigl(
\PR_T(\gn{\PR_T(\gn{\varphi(\ora{\dot{x}})}) \to \varphi(\ora{\dot{x}})})
\to
\PR_T(\gn{\varphi(\ora{\dot{x}})})
\Bigr).
\]
\end{theorem}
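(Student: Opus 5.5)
We follow the usual proof of L\"ob's theorem via the diagonal lemma, carried out uniformly in the parameters $\ora{x}$. Let $\chi(\ora{x})$ abbreviate $\PR_T(\gn{\varphi(\ora{\dot{x}})})$. By the diagonal lemma with parameters, we fix a formula $\delta(\ora{x})$ such that
\[
\PA \vdash \forall \ora{x}\bigl(\delta(\ora{x}) \leftrightarrow (\PR_T(\gn{\delta(\ora{\dot{x}})}) \to \varphi(\ora{x}))\bigr).
\]
The goal is to show, inside $\PA$ and for arbitrary $\ora{x}$, that $\PR_T(\gn{\chi(\ora{\dot{x}}) \to \varphi(\ora{\dot{x}})})$ implies $\chi(\ora{x})$. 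Every step goes through $\DCU{1}$ and $\DCU{2}$ applied to formulas whose free variables are $\ora{x}$, so that it is uniform in $\ora{x}$.

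First, since $T \supseteq \PA$, we have $T \vdash \forall \ora{x}(\delta(\ora{x}) \to (\PR_T(\gn{\delta(\ora{\dot{x}})}) \to \varphi(\ora{x})))$. Applying $\DCU{1}$ and then $\DCU{2}$ twice gives
\[
\PA \vdash \forall \ora{x}\bigl(\PR_T(\gn{\delta(\ora{\dot{x}})}) \to (\PR_T(\gn{\PR_T(\gn{\delta(\ora{\dot{x}})})}) \to \chi(\ora{x}))\bigr).
\]
Here we need the fact that the substitution term $\gn{\PR_T(\gn{\delta(\ora{\dot{x}})})}$ arising from $\DCU{1}$ coincides, provably in $\PA$, with the term $\gn{\psi(\ora{\dot{x}})}$ for $\psi(\ora{x}) :\equiv \PR_T(\gn{\delta(\ora{\dot{x}})})$. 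This holds because substituting numerals into the formula $\PR_T(\gn{\delta(\ora{\dot{x}})})$ is the same operation as applying $\gn{\cdot(\ora{\dot{x}})}$ to it. Next, $\DCU{3}$ for $\delta$ eliminates the middle hypothesis, so we obtain
\[
\PA \vdash \forall \ora{x}\bigl(\PR_T(\gn{\delta(\ora{\dot{x}})}) \to \chi(\ora{x})\bigr).
\]

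Second, we use the other direction of the fixed point. From the displayed fact, $T \vdash \forall \ora{x}\bigl((\chi(\ora{x}) \to \varphi(\ora{x})) \to (\PR_T(\gn{\delta(\ora{\dot{x}})}) \to \varphi(\ora{x}))\bigr)$, and the fixed point turns the consequent into $\delta(\ora{x})$. So $T \vdash \forall \ora{x}((\chi(\ora{x}) \to \varphi(\ora{x})) \to \delta(\ora{x}))$. Applying $\DCU{1}$ and $\DCU{2}$ yields
\[
\PA \vdash \forall \ora{x}\bigl(\PR_T(\gn{\chi(\ora{\dot{x}}) \to \varphi(\ora{\dot{x}})}) \to \PR_T(\gn{\delta(\ora{\dot{x}})})\bigr).
\]
Chaining this with the previous step gives the theorem.

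The main obstacle is not the logical skeleton, which is the standard one. It is the bookkeeping about G\"odel-number terms: $\DCU{2}$ must be applied with the correct formulas $\varphi(\ora{x}), \psi(\ora{x})$, and we must verify in $\PA$ identities such as $\gn{(\alpha\to\beta)(\ora{\dot{x}})} = \gn{\alpha(\ora{\dot{x}})} \dot{\to} \gn{\beta(\ora{\dot{x}})}$ and the nesting identity for $\gn{\PR_T(\gn{\delta(\ora{\dot{x}})})(\ora{\dot{x}})}$. We would verify these by noting that the substitution functions are primitive recursive and commute with connectives, and that $\PA$ proves these commutation equations. Note that $\DCU{3}$ is used only for $\delta$, not via $\SCU$, since $\PR_T$ need not be $\Sigma_1$.
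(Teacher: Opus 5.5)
The paper states this theorem without proof, as standard background, so there is no in-paper argument to compare against. Your proposal is the usual proof of L\"ob's theorem run uniformly in $\ora{x}$ through the diagonal lemma with parameters, and it is correct. The two applications of $\DCU{2}$, the single use of $\DCU{3}$ for $\delta$ (not via $\SCU$, which is exactly why the paper says $\DCU{3}$ is not redundant), and the final chaining all go through.

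One simplification: you do not need the commutation identity $\gn{(\alpha\to\beta)(\ora{\dot{x}})} = \gn{\alpha(\ora{\dot{x}})}\,\dot{\to}\,\gn{\beta(\ora{\dot{x}})}$. $\DCU{2}$ is stated with the term $\gn{\varphi(\ora{\dot{x}})\to\psi(\ora{\dot{x}})}$ of the compound formula, which is literally the term delivered by $\DCU{1}$. Likewise, the nested term in $\DCU{3}$ is by definition $\gn{\psi(\ora{\dot{x}})}$ for $\psi(\ora{x}) :\equiv \PR_T(\gn{\delta(\ora{\dot{x}})})$. The only genuine bookkeeping is matching the substitution term produced by the parametrized diagonal lemma with the one used in the derivability conditions.
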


Let $\tau(v)$ be a formula weakly representing the theory $T$ in $\PA$, that is, for any $\LA$-sentence $\varphi$, $\varphi \in T$ if and only if $\PA \vdash \tau(\gn{\varphi})$. 
$\tau(v)$ is not necessarily the standard primitive recursive definition of $T$. 
Using $\tau$, we can uniformly construct a formula $\PR_\tau(x)$ saying that $x$ is provable in a theory defined by $\tau$ in a natural way. 
The formula $\PR_\tau(x)$ is called a \emph{Fefermanian provability predicate}. 
This notion was originally introduced by Feferman \cite{Fefe60}.
It is known that if $\tau(v)$ is $\Sigma_n$ for $n>0$, then
$\PR_\tau(x)$ is also a $\Sigma_n$ formula.

\begin{theorem}[Feferman \cite{Fefe60}]\label{Fef}
Every Fefermanian provability predicate satisfies $\DCU{1}$, $\DCG{2}$, and $\SCU$.
\end{theorem}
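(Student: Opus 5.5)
We have to prove Feferman's theorem: for $\tau(v)$ weakly representing $T$, the Fefermanian predicate $\PR_\tau(x)$ satisfies $\DCU{1}$, $\DCG{2}$ and $\SCU$. Write $\PR_\tau(x) :\equiv \exists y\,\mathrm{Prf}_\tau(x,y)$. Here $\mathrm{Prf}_\tau(x,y)$ says that $y$ codes a finite sequence of formulas ending in $x$, each of which is a logical axiom, satisfies $\tau$, or follows from earlier entries by a rule of inference. The clause for $\tau$ is the only non-primitive-recursive ingredient. We treat the three conditions in the order $\DCG{2}$, $\SCU$, $\DCU{1}$.

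\textbf{$\DCG{2}$.} This is purely syntactic and holds for any $\tau$. Argue in $\PA$: given $y_1$ witnessing $\mathrm{Prf}_\tau(x\dot\to y, y_1)$ and $y_2$ witnessing $\mathrm{Prf}_\tau(x,y_2)$, form the concatenation $y_2 * y_1 * \langle y\rangle$. Its last entry follows by modus ponens from the last entries of the two pieces. Each earlier entry still satisfies its justifying clause, because the clauses for logical axioms and $\tau$-membership do not depend on position, and the rule clauses only refer to earlier entries, which stay earlier. Formally this needs only basic $\PA$ facts about sequence coding (lengths and entries of concatenations), all provable by induction.

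\textbf{$\SCU$.} Here I would follow the standard route: prove by induction on the construction of a $\Sigma_1$ formula $\varphi(\ora{x})$ that $\PA \vdash \forall\ora{x}\,(\varphi(\ora{x})\to\PR_\tau(\gn{\varphi(\ora{\dot x})}))$. Throughout, only proofs in pure logic plus the finitely many axioms of Robinson's $\mathsf Q$ (or the $\PA$ axioms actually used) are needed. Since these are standard axioms of $T$, each of them, say $\alpha$, satisfies $\PA\vdash\tau(\gn{\alpha})$. Their finitely many codes can be listed explicitly, so $\PA$ proves that every entry of the constructed proof satisfies $\tau$ or is a logical axiom.
\begin{itemize}
\item For atomic and negated-atomic formulas (after reducing to terms built from $0,\su,+,\times$), formalize the computation of numeral equations $\num{m}+\num{n}=\num{m+n}$ and so on. This uses induction inside $\PA$ on $n$, with the $\DCG{2}$ concatenation lemma building the proofs step by step.
\item The connective and bounded-quantifier cases again use $\DCG{2}$ together with the formalized provability of tautologies and of facts like $x<\num{n+1}\leftrightarrow\bigvee_{i\le n}x=\num i$, the latter proved by induction on $n$ in $\PA$.
\item The unbounded $\exists$ case uses the logical axiom $\varphi(\num n)\to\exists x\varphi(x)$.
\end{itemize}
This step is the main obstacle, since all the bookkeeping with $\dot x$, substitution and numerals must be formalized carefully. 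However, it is identical to the usual proof for $\Prov_T$, because only the finitely many $\tau$-verified base axioms are used.

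\textbf{$\DCU{1}$.} Suppose $T\vdash\forall\ora x\,\varphi(\ora x)$, and fix an actual $T$-proof $p$ of it. Each non-logical axiom $\alpha$ occurring in $p$ is in $T$, so $\PA\vdash\tau(\gn\alpha)$, and the other clauses for $p$ are true primitive recursive facts, hence $\PA$-provable. Therefore $\PA\vdash\PR_\tau(\gn{\forall\ora x\,\varphi(\ora x)})$. The instantiation axiom $\forall\ora x\varphi(\ora x)\to\varphi(\ora{\dot x})$ is a logical axiom for every $\ora x$, and $\PA$ verifies this uniformly. Applying $\DCG{2}$ inside $\PA$ then gives $\PA\vdash\forall\ora x\,\PR_\tau(\gn{\varphi(\ora{\dot x})})$.
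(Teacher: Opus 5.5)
Your proposal is correct and is the standard argument: proof concatenation for $\DCG{2}$, a fixed $T$-proof plus uniformly verified numeral instantiation for $\DCU{1}$, and the usual formalized $\Sigma_1$-completeness for $\SCU$, which needs only finitely many $\tau$-verified axioms. The paper gives no proof and simply cites Feferman. One economy you miss: $\DCG{2}$ yields $\DCU{2}$, because $\PA$ proves $\gn{\varphi(\ora{\dot{x}})\to\psi(\ora{\dot{x}})} = \gn{\varphi(\ora{\dot{x}})}\dot{\to}\gn{\psi(\ora{\dot{x}})}$, so once $\DCU{1}$ is established, Theorem~\ref{Buc} gives $\SCU$ without the case-by-case induction on $\Sigma_1$ formulas.
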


\subsection{Arithmetical interpretations}\label{ssec:ai}

To connect quantified modal logic with arithmetic, we introduce arithmetical interpretations.

\begin{definition}[Arithmetical interpretations]
Let $\PR_T(x)$ be a provability predicate of $T$.
An \emph{arithmetical interpretation} based on $\PR_T(x)$ is a map $f$ from modal formulas to $\LA$-formulas having the same free variables satisfying the following conditions: 
\begin{itemize}
    \item if $P$ is an $n$-ary predicate symbol and $f(P(x_0,\ldots,x_{n-1})) \equiv \varphi(x_0, \ldots, x_{n-1})$, then $f(P(y_0,\ldots, y_{n-1})) \equiv \varphi(y_0, \ldots, y_{n-1})$, 
    \item $f(A \circ B) \equiv f(A)\circ f(B)$ for $\circ \in \{\land,\lor,\to\}$,
    \item $f(\neg A) \equiv \neg f(A)$,
    \item $f(Q x A) \equiv Q x\, f(A)$ for $Q \in \{\forall, \exists\}$,
    \item $f(\Box A(x_0, \ldots, x_{n-1})) \equiv \PR_T(\gn{f(A)(\dot{x}_0, \ldots, \dot{x}_{n-1})})$.
\end{itemize}
\end{definition}

\begin{definition}
Let $\PR_T(x)$ be a provability predicate of $T$.
The quantified provability logic $\QPL(\PR_T)$ of $\PR_T(x)$ is the set of all modal sentences $A$ such that $T \vdash f(A)$ for all arithmetical interpretations $f$ based on $\PR_T(x)$.
\end{definition}

It is easily verified that if $\PR_T(x)$ satisfies $\DCU{1}$ and $\DCU{2}$, then $\QPL(\PR_T)$ contains $\QK$.
Indeed, $\DCU{1}$ yields the arithmetical soundness of necessitation, and $\DCU{2}$ yields the arithmetical soundness of the $\mathbf{K}$-axiom in the framework of quantified modal logic.
In particular, if $\PR_\tau(x)$ is a Fefermanian provability predicate, then by Theorem~\ref{Fef}, $\QPL(\PR_\tau)$ contains $\QK$.
On the other hand, if $\PR_T(x)$ satisfies $\DCU{1}$ and $\DCU{2}$, then by Theorem~\ref{Buc}, it also satisfies $\SCU$.
Therefore, if moreover $\PR_T(x)$ is a $\Sigma_1$ provability predicate, then it satisfies $\DCU{3}$.
Hence, by Theorem~\ref{ULob}, $\PR_T(x)$ satisfies the formalized L\"ob theorem with parameters.
In particular, $\QPL(\PR_T)$ contains $\QGL$.

Thus, for a Fefermanian provability predicate $\PR_\tau(x)$, if $\tau(x)$ is a $\Sigma_1$ formula, then $\QPL(\PR_\tau)$ contains $\QGL$.
However, in this case, Vardanyan \cite{Vard86} proved that the quantified provability logic of $\PR_\tau(x)$ is $\Pi^0_2$-complete.
In particular, it has strictly higher arithmetical complexity than $\QGL$ and hence cannot coincide with $\QGL$. 

On the other hand, Artemov and Japaridze proved the following significant arithmetical completeness result with respect to finite Kripke frames.

\begin{theorem}[Artemov and Japaridze \cite{AD90}]
Let $\tau(x)$ be a $\Sigma_1$ formula weakly representing $T$ in $\PA$.
For any modal sentence $A$, if $A \in \QPL(\PR_\tau)$, then $A$ is valid in all transitive and conversely well-founded finite Kripke frames.
\end{theorem}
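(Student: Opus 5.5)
We argue by contraposition: assuming $A$ fails in some transitive, conversely well-founded finite Kripke model, we build an arithmetical interpretation $f$ based on $\PR_\tau$ with $T \nvdash f(A)$.

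\textbf{Normalizing the model.} Let $\mathcal{M}=(W,R,\{D_w\}_{w\in W},\Vdash)$ be such a model with $w_0 \nVdash A$. By Lemma~\ref{gsl} we may restrict to the submodel generated by $w_0$, so $\mathcal{M}$ is rooted at $w_0$. Rename the worlds as $1,\dots,n$. Add a new root $0$ with $0 R w$ for every $w\geq 1$ and $D_0 := D_{w_0}$; atomic truth at $0$ is irrelevant. We identify $D := \bigcup_w D_w$ with an initial segment $\{0,\ldots,k-1\}$ of $\omega$, arranged so that $D_{w_0}$ corresponds to the smallest elements.

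\textbf{Solovay function and limit.} Define the primitive recursive Solovay function $h$ in the usual way: $h(0)=0$, and $h(m+1)=j$ if $h(m)R j$ and $m$ is a $T$-proof of $\neg\,\exists y\,\ell = \num{j}$, where $\ell$ is the limit. The recursion theorem makes $h$ and $\ell$ mutually defined, and we set $S_j :\equiv \exists x\, \forall y \geq x\, h(y)=\num{j}$. The standard Solovay lemmas should then be provable in $\PA$:
\begin{itemize}
\item the limit exists and is unique;
\item for $i\neq 0$, $S_i \to \Prov_T(\gn{\neg S_i})$;
\item for $i R j$, $S_i \to \neg\Prov_T(\gn{\neg S_j})$;
\item for $i\ge1$, $S_i \to \Prov_T(\gn{\bigvee_{iRj} S_j})$;
\item $S_0$ is true in $\mathbb{N}$;
\item for $i\neq 0$, $T\nvdash \neg S_i$ (using consistency of $T$).
\end{itemize}

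\textbf{Handling domains and the interpretation.} Following Artemov--Japaridze, the domain $D_i$ is coded as "small numbers": define $\delta(x) :\equiv \bigvee_i (S_i \wedge x < \num{|D_i|})$, so that $D_i\subseteq D_j$ gives monotone growth. Set
\[
f(P(\ora{x})) :\equiv \bigvee_{i}\Bigl(S_i \wedge \bigvee_{i\Vdash P(\ora{a}),\,\ora{a}\in D_i}\ \ora{x}=\ora{\num{a}}\Bigr),
\]
and relativize every quantifier in the translation to $\delta$. This needs a mild modification of $f$, or equivalently a translation $A\mapsto A^\delta$ handled by substituting $P$ with a formula that forces relativization. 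The standard trick is to prove the claim for $A^\delta$ and note that $A$ fails in a frame iff the relativized version does. The alternative is to let each element $a\ge|D_{w_0}|$ be represented by a numeral that is "equal to $0$" below worlds lacking it.

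\textbf{Main lemma.} By induction on subformulas $B(\ora{x})$ of $A$ we prove: for $i\ge1$ and $\ora{a}\in D_i$,
\[
i\Vdash B(\ora{a}) \;\Rightarrow\; \PA\vdash S_i\to f(B)(\ora{\num a}), \qquad i\nVdash B(\ora{a}) \;\Rightarrow\; \PA\vdash S_i\to\neg f(B)(\ora{\num a}).
\]
The propositional and quantifier steps are routine given finiteness of the domains. The $\Box$ step uses $\DCU{1}$, $\DCG{2}$, and the Solovay lemmas above. For the box with free variables, $\SCU$ is used to reduce $\PR_\tau(\gn{f(B)(\ora{\dot x})})$ at $\ora x=\ora{\num a}$ to the numeral instance.

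\textbf{Conclusion and main obstacle.} Applying the lemma at $w_0$ gives $\PA\vdash S_{w_0}\to\neg f(A)$. If $T\vdash f(A)$, then $T\vdash\neg S_{w_0}$, which contradicts the last Solovay lemma. The main obstacle is the varying-domain issue: ensuring that quantifiers in arithmetic range over exactly $D_i$ under $S_i$, uniformly inside $\Prov$. I would first try to resolve this with the $\delta$-relativization. That route requires checking that relativizing to $\delta$ commutes with the interpretation of $\Box$.
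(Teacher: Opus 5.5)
There is a genuine gap in how you handle varying domains, which is the real content of the Artemov--Japaridze theorem, and your plan leaves it open. An arithmetical interpretation must satisfy $f(\forall x\,A)\equiv\forall x\,f(A)$, so you may not relativize quantifiers to $\delta$. What your construction would actually show is $T\nvdash g(A^{E})$, where $A^{E}$ is $A$ with all quantifiers relativized to a fresh predicate $E$ and $g(E(x)):\equiv\delta(x)$. That gives only $A^{E}\notin\QPL(\PR_\tau)$, and getting from there to $A\notin\QPL(\PR_\tau)$ is exactly what has to be proved.

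With unrelativized quantifiers, the truth lemma also cannot be stated for numeral instances $\ora{\num a}$. The $\Box$ step needs the coding of elements by numbers to pass under $\PR_\tau$, i.e.\ $\theta_k(x)\to\PR_\tau(\gn{\theta_k(\dot x)})$. Since $S_0$ is true in $\mathbb N$, every numeral is then $T$-provably a code of an element of $D_{w_0}$. Elements of $D_i\setminus D_{w_0}$ can only be realized by nonstandard numbers in models of $S_i$.

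Your alternative, letting $\num a$ stand for $0$ at worlds lacking $a$, makes the coding depend on the limit. So it is neither $\Sigma_1$ nor stable along $R$. Take $a\notin D_i$, $iRj$ and $a\in D_j$. Under $S_i$, $f(\Box C)(\num a)$ would have to express $C(0)$ at $j$, whereas $\PR_\tau$ evaluates $f(C)(\num a)$ at $j$ as $C(a)$. Separately, $x<\num{|D_i|}$ cannot describe all $D_i$ at once when incomparable worlds have different domains.

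The missing ingredient is what the paper imports from Artemov and Japaridze in Lemmas~\ref{lem:AD} and~\ref{lem:truth}. You need $\Sigma_1$ formulas $\theta_k(x)$, $k\in D$, defined from the Solovay function itself, so that which element a number codes is an absolute fact about that number. At the same time $\PA$ must prove three things: the $\theta_k$ are pairwise disjoint, $S_i\to\exists x\,\theta_k(x)$ for $k\in D_i$, and $S_i\to\forall x\bigvee_{k\in D_i}\theta_k(x)$.

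For instance, let $x$ code the element of $D_{h(x)}$ whose index is $x$ modulo $|D_{h(x)}|$. Since $h$ only climbs along $R$ and domains expand, under $S_i$ every $x$ codes an element of $D_i$. Since $h$ is eventually constant $i$, every element of $D_i$ is coded. Then set $f(P(\ora{x}))$ to be the disjunction, over $i$ and $\ora{k}\in D_i$ with $i\Vdash P(\ora{k})$, of $S_i\wedge\bigwedge_u\theta_{k_u}(x_u)$.

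Prove the truth lemma in free-variable form: $\PA\vdash S_i\wedge\bigwedge_u\theta_{k_u}(x_u)\to f(B)(\ora{x})$, respectively $\to\neg f(B)(\ora{x})$. The quantifier steps use the last two properties, and the $\Box$ step uses $\SCU$ for the $\theta_k$ in place of numerals.

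Two smaller points. First, build the Solovay function from $\tau$-proofs and state its lemmas for $\PR_\tau$; for an arbitrary $\Sigma_1$ $\tau$, $\PA$ need not prove $\PR_\tau(x)\leftrightarrow\Prov_T(x)$. Second, the step $T\nvdash\neg S_i$ needs $\Sigma_1$-soundness of $T$, as in the original setting with $\PA$, not mere consistency. For $T=\PA+\neg\Con_{\PA}$ with the standard $\tau$, $T\vdash\PR_\tau(\gn{0=1})$, so already $\Box\bot\in\QPL(\PR_\tau)$.
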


They proved this theorem by constructing an appropriate arithmetical interpretation from each transitive conversely well-founded finite Kripke model by extending Solovay's method to the framework of quantified modal logic. 

The aim of the present paper is to extend this connection between arithmetical interpretations and finite Kripke frames which are not necessarily transitive nor conversely well-founded.
For this purpose, one must find provability predicates $\PR_T(x)$ for which $\QPL(\PR_T)$ does not necessarily contain $\QGL$.
There are essentially two possible ways to achieve this.
One is to drop the assumption that $\PR_T(x)$ is a $\Sigma_1$ provability predicate.
The other is to consider provability predicates which do not satisfy at least one of $\DCU{1}$ and $\DCU{2}$.

In Section~\ref{sec:main1}, we establish a result based on $\Sigma_2$ Fefermanian provability predicates.
In Section~\ref{sec:main2}, we establish a result based on $\Sigma_1$ provability predicates satisfying $\DCG{2}$ but not necessarily $\DCU{1}$.

\section{Arithmetical completeness with respect to $\Sigma_2$ Fefermanian provability predicates}\label{sec:main1}

In this section, we prove that for any conversely well-founded finite Kripke model of quantified modal logic, there exists a $\Sigma_2$ Fefermanian provability predicate $\PR_\tau(x)$ of $T$ and an arithmetical interpretation based on $\PR_\tau(x)$ such that the model can be embedded into arithmetic. 
As mentioned in the previous section, $\Sigma_2$ cannot be replaced by $\Sigma_1$ in general. 
Our proof is obtained by combining the proofs presented in Artemov and Japaridze \cite{AD90} and Kurahashi \cite{Kura18}. 
Therefore, we give only an outline of the proof. 

Let $\alpha(v)$ be a formula weakly representing $T$ in $\PA$.
We recursively define the sequence $\{\Con_\alpha^n\}_{n\in\omega}$ of $\LA$-sentences by $\Con_\alpha^0 :\equiv (0=0)$, and $\Con_\alpha^{n+1} :\equiv \neg \PR_\alpha(\gn{\neg \Con_\alpha^n})$.
If there exists $n\in\omega$ such that $T \vdash \neg \Con_\alpha^n$, then we say that the \emph{height} of $\alpha(v)$ is the least such $n$.
Otherwise, the height of $\alpha(v)$ is said to be $\infty$.
The following lemma is due to Beklemishev \cite{Bekl90} and a simplified proof of the lemma was presented in \cite{Kura18}.

\begin{lemma}[Cf.~{\cite[Lemma~4.3]{Kura18}}]\label{lem:height-infty}
There exists a $\Sigma_1$ formula $\alpha(v)$ weakly representing $T$ in $\PA$ such that the height of $\alpha(v)$ is $\infty$.
\end{lemma}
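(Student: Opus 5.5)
The plan is to build $\alpha(v)$ by a self-referential definition that makes $T$ look inconsistent only at nonstandard stages. Fix the standard $\Sigma_1$ (in fact primitive recursive) definition $\sigma(v)$ of $T$. By the diagonal lemma, define
\[
\alpha(v) :\equiv \sigma(v) \lor \exists y\,\bigl(v = \gn{0 \neq 0} \land \theta(y)\bigr),
\]
where $\theta(y)$ is a primitive recursive formula. The intended reading of $\theta(y)$ is: ``$y$ witnesses that $\PR_\alpha(\gn{\neg\Con_\alpha^n})$ holds via a $\sigma$-proof, for some $n$ below a suitable bound on $y$.'' The formula $\alpha$ is $\Sigma_1$.

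Weak representation will follow from the consistency of $T$. In the standard model, $\theta$ never holds, because $T$ is consistent and therefore $T \nvdash \neg\Con^n$ for the relevant $n$. So for each standard $y$, $\PA \vdash \neg\theta(\num{y})$. This gives $\PA \vdash \alpha(\gn{\varphi})$ iff $\varphi \in T$ at the standard level. We only need to check that $\PA$ does not prove $\exists y\,\theta(y)$. That reduces to soundness-type reasoning about the true $\Sigma_1$ sentence $\exists y\,\theta(y)$: it is false, so $\PA$ cannot prove it, because $\PA$ proves no false $\Sigma_1$ sentences.

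The height will be shown to be $\infty$ by contradiction. Suppose $T \vdash \neg\Con_\alpha^n$ with $n$ least. Then $T \vdash \PR_\alpha(\gn{\neg\Con_\alpha^{n-1}})$. Since $T$ is sound enough on the relevant $\Sigma_1$ facts in the standard model, and $\alpha$ agrees with $\sigma$ on standard numbers, the standard model would satisfy $\neg\Con_\sigma^{n-1}$-like statements. To make this rigorous I would follow \cite{Kura18}: choose $\theta$ so that, provably in $\PA$, $\neg\Con_\alpha^{n}$ implies $\exists y\,\theta(y)$, and hence implies $\PR_\alpha(\gn{0\neq 0})$ in a trivial way. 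Then $T \vdash \neg\Con^n_\alpha$ would force $T \vdash \exists y\,\theta(y)$. The self-reference in $\theta$ then makes $\exists y\,\theta(y)$ true, via a finite witness coming from the standard $T$-proof of $\neg\Con^n_\alpha$, contradicting that $\theta$ never holds.

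The main obstacle is arranging $\theta$ so that both directions work. It must be false in $\mathbb N$, so that representation holds, yet a $T$-proof of any $\neg\Con_\alpha^n$ must yield a genuine witness. The natural choice is a Rosser/Solovay-style formula: $\theta(y)$ says ``$y$ is a $\sigma$-proof of $\neg\Con^n_\alpha$ for some $n \le y$.'' If such a proof existed, $\theta$ would be true, so by definition $T \vdash \neg\Con^n_\alpha$ never happens. The remaining task is to confirm that this definition is non-circular via the diagonal lemma applied to the sequence map $n \mapsto \gn{\neg\Con^n_\alpha}$, which is primitive recursive in $\gn{\alpha}$.
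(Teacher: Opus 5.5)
\textbf{There is a genuine gap, at the heart of the argument.}

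\emph{The circularity.} You justify that $\theta$ has no witness in $\mathbb N$ by saying ``$T$ is consistent and therefore $T \nvdash \neg\Con^n_\alpha$''. Consistency of $T$ does not give this. It is exactly the height-$\infty$ claim you are trying to prove. For the standard numeration it is false whenever $T$ is $\Sigma_1$-unsound, and that is the only case where the lemma has content: for $\Sigma_1$-sound $T$ one can simply take $\alpha \equiv \sigma$. In your construction, weak representation and height $\infty$ both reduce to ``$\exists y\,\theta(y)$ is false''. So the later contradiction with ``$\theta$ never holds'' just uses the conclusion.

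\emph{The construction confirms the bad case instead of refuting it.} If $\theta(\num{p})$ holds, then $\PA \vdash \alpha(\gn{0\neq 0})$. Hence $\PA \vdash \neg\Con^m_\alpha$ for all $m \ge 1$, which is perfectly consistent with the witness existing.

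\emph{A concrete failure.} Take $T = \PA + \neg\Con_\PA$ with its natural definition $\sigma$. Since $\PA \vdash \sigma(v) \to \alpha(v)$, we get $\PA \vdash \Con^1_\alpha \to \Con_\PA$, so $T \vdash \neg\Con^1_\alpha$. A standard $T$-proof of this is a true witness for $\theta$. Then $\alpha$ numerates $T \cup \{0 \neq 0\}$ and does not weakly represent $T$. The same argument defeats every $\alpha$ that $\PA$-provably extends $\sigma$, whatever $\theta$ is. Enlarging the numeration can only make iterated inconsistency easier to prove.

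\textbf{The missing idea.} You need a mechanism that \emph{shrinks} the numeration in the bad case, plus an independent reason why the shrunken theory cannot look iteratedly inconsistent. The paper itself only cites \cite{Kura18} and Beklemishev for this lemma. One route that works is the following.
\begin{enumerate}
\item By the diagonal lemma, take a $\Sigma_1$ (indeed primitive recursive) $\alpha$ with $\PA \vdash \alpha(v) \leftrightarrow \sigma(v) \land \forall y \le v\, \neg\theta(y)$, using your $\theta$.
\item Suppose $T \vdash \neg\Con^n_\alpha$, and let $p$ be the least standard witness of $\theta$. Then $\PA \vdash \forall v\,(\alpha(v) \to v < \num{p})$. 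So $\PA$ proves that $\alpha$ numerates a finite $T_0 \subseteq T$.
\item Since $\PA$ is essentially reflexive, $T \vdash \PR_\alpha(\gn{\chi}) \to \chi$ for every sentence $\chi$.
\item Peeling off one layer at a time gives $T \vdash \neg\Con^{n-1}_\alpha$, then $\ldots$, then $T \vdash \neg(0=0)$. This contradicts the consistency of $T$.
\item Hence $\theta$ has no standard witness. So $\PA \vdash \alpha(\gn{\varphi}) \leftrightarrow \sigma(\gn{\varphi})$ for every $\varphi$, which gives weak representation. Any $T$-proof of some $\neg\Con^n_\alpha$ would be such a witness, which gives height $\infty$.
\end{enumerate}
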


By Lemma~\ref{lem:height-infty}, we fix a $\Sigma_1$ formula $\alpha(v)$ weakly representing $T$ in $\PA$ whose height is $\infty$.

\begin{theorem}\label{main1}
    Suppose that a modal sentence $A$ is not valid in a conversely well-founded finite Kripke model. 
    Then, there exists a $\Sigma_2$ formula $\tau(x)$ weakly representing $T$ in $\PA$ and an arithmetical interpretation $f$ based on $\PR_\tau(x)$ such that $T \nvdash f(A)$. 
\end{theorem}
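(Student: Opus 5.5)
We combine the Solovay-style construction of Artemov and Japaridze \cite{AD90} with the $\Sigma_2$ Fefermanian trick of \cite{Kura18}. Let $\mathcal{M}=(W,R,\{D_w\}_{w\in W},\Vdash)$ be conversely well-founded and finite, with $w_0\nVdash A$. By Lemma~\ref{gsl} we may assume $\mathcal{M}$ is rooted at $w_0$.

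Since $W$ is finite and conversely well-founded, $R$ is irreflexive and has no cycles. Rename the worlds as $1,\dots,n$ with $w_0=1$, and adjoin a new root $0$ with $0Ri$ for all $i\ge1$ and $D_0:=D_1$. As in \cite{Kura18}, we need not assume transitivity: instead of a Solovay function following $R$, we use a $\Sigma_2$ limit function.

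\textbf{Step 1: a Solovay-type function and the predicate $\tau$.} Define a primitive recursive function $h$ by the recursion theorem. It starts at $0$ and moves from $i$ to $j$ with $iRj$ at stage $m$ when $m$ is a proof, in the fixed $\alpha$ of Lemma~\ref{lem:height-infty}, of $\neg S_j$. Here $S_j$ is ``$\lim h=j$'', which is $\Sigma_2$.

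Set
\[
\tau(v):\equiv \alpha(v)\lor \bigvee_{i\ge1}\Bigl(S_i \land \bigvee_{j:\,\neg iRj} v=\gn{\neg S_j}\Bigr),
\]
or a variant that makes ``$\PR_\tau(\gn{\varphi})$'' equivalent under $S_i$ to ``$\varphi$ is $\alpha$-provable from $\{\neg S_j : \neg iRj\}$''. This is $\Sigma_2$. It weakly represents $T$ because $\PA\vdash S_0$ cannot fail in the standard sense: in fact $\mathbb{N}\models S_0$ and $\PA\vdash\neg S_i$ for no $i$. The infinite height of $\alpha$ is used to show that $T\nvdash \neg S_i$ for every $i$, in particular for $i=1$. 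I would follow \cite[Section~4]{Kura18} closely here.

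The key properties, provable in $\PA$, are:
\begin{enumerate}
\item $S_i\to \PR_\tau(\gn{\neg S_j})$ whenever $\neg iRj$;
\item $S_i\to\neg\PR_\tau(\gn{\neg S_j})$ whenever $iRj$;
\item $\PR_\tau(\gn{\varphi})\land S_i\leftrightarrow \PR_\alpha(\gn{\bigvee_{iRj}S_j\to\varphi})\land S_i$, suitably formalized with parameters.
\end{enumerate}

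\textbf{Step 2: the interpretation of predicates (Artemov--Japaridze).} Identify the union of all $D_i$ with the finite set $\{0,\dots,d-1\}$. Following \cite{AD90}, interpret quantifiers so that under $S_i$ the range effectively becomes $D_i$. Since $f(\forall xA)\equiv\forall x f(A)$, we cannot relativize directly. Instead, define a $\Sigma_2$-sensitive map $e_i$ sending each natural number to an element of $D_i$: elements of $D_i$ go to themselves and all other numbers go to a fixed default element of $D_i$. Because $D_i\subseteq D_j$ for $iRj$, default elements must be chosen compatibly. Following \cite{AD90}, use numbers that are "new" only beyond the world and collapse them via the limit. Then set
\[
f(P(\vec x)):\equiv\bigvee_i \bigl(S_i\land \langle e_i(\vec x)\rangle\in P^{i}\bigr),
\]
where $P^i$ is the finite extension of $P$ at $i$.

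\textbf{Step 3: truth lemma.} By induction on $B$, prove that for all $i\ge1$ and all $\vec a$ in $D_i$, $w_i\Vdash B(\vec a)$ implies $\PA\vdash S_i\to f(B)(\vec{\num a})$, and similarly with $\neg$. The propositional and quantifier cases follow from the construction of $e_i$ together with the domain inclusion. The $\Box$ case uses the key properties together with $\DCU{1}$ and $\DCG{2}$ for $\PR_\tau$ from Theorem~\ref{Fef}, in the form of $\PA\vdash S_j\to f(B)$ and its provability. The clause $\dot x$ must be handled uniformly in the parameters, which is why the statements are made uniform in $\vec x$.

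It then follows that $\PA\vdash S_1\to\neg f(A)$, so $T\vdash f(A)$ would give $T\vdash\neg S_1$, contradicting Step 1.

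\textbf{Main obstacle.} The main difficulty is the interaction in the $\Box$ and quantifier cases between arbitrary numbers substituted via $\dot x$ and the finite domains, that is, ensuring that $e_i$ behaves uniformly in $\PA$. I would copy the Artemov--Japaridze treatment verbatim, replacing their transitive Solovay function by the non-transitive $\Sigma_2$ limit construction of \cite{Kura18}.
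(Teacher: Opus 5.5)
Your Step 1 is sound and a legitimate variant of the paper's. You let the Solovay function move along $R$ itself and put $\neg S_j$ into $\tau$ for every non-successor $j$. The paper instead runs Solovay's construction on the transitive closure $(R^+)^{\mathrm t}$ and uses $\gamma$ to add only $\neg\lambda(\num z)$ for $z$ in the transitive closure that are not $R^+$-successors. Either way you get your properties 1--3, $\mathbb N\models S_0$, and $T\nvdash\neg S_1$.

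The gap is in Step 2, where you yourself located the main obstacle. A world-indexed family of fixed maps $e_i$ cannot work once domains strictly grow. For $iRj$, the positive $\Box$ step needs, for arbitrary valuations, that every number $n$ denotes the same element at $j$ as at $i$, i.e.\ $e_j(n)=e_i(n)$. The quantifier steps need $e_i[\omega]=D_i$. Together these force $D_i=D_j$, so no compatible choice of defaults exists.

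Concretely, take $1R2$, $D_1=\{0\}$, $D_2=\{0,1\}$, $2\Vdash P(0)$ and $2\nVdash P(1)$, so $1\Vdash\forall x\,\Box P(x)$.
\begin{enumerate}
\item Since $e_2(1)=1$, we get $\PA\vdash S_2\to\neg f(P)(\num 1)$.
\item By your property 2 for $1R2$, $\PA\vdash S_1\to\neg\PR_\tau(\gn{f(P)(\num 1)})$.
\item Hence $\PA\vdash S_1\to\neg f(\forall x\,\Box P(x))$, although $e_1(1)=0$ and $1\Vdash\Box P(0)$.
\end{enumerate}
So the truth lemma fails at the root. Making the denotation depend on the limit (``collapse via the limit'') goes the wrong way. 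A limit-dependent denotation is not $\Sigma_1$, so it cannot be transported inside $\PR_\tau$ by $\Sigma_1$-completeness, and that transport is exactly what the $\Box$ step requires.

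The missing idea, which the paper takes from Artemov--Japaridze, is a single, world-independent family of $\Sigma_1$ formulas $\theta_k(x)$. Their truth is fixed by the finite run of the Solovay function, not by its limit: roughly, a number is assigned to an element of the domain of the world the function occupies at the corresponding stage. They satisfy Lemma~\ref{lem:AD}:
\begin{enumerate}
\item under $\lambda(\num i)$ every element of $D_i$ is denoted;
\item every number satisfies some $\theta_k$ (with $k\in D_i$, which you need for the $\forall$ step);
\item the $\theta_k$ are pairwise exclusive.
\end{enumerate}
Being $\Sigma_1$, they satisfy $\PA\vdash\theta_k(x)\to\PR_\tau(\gn{\theta_k(\dot x)})$, and this is precisely the persistence the $\Box$ case needs. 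One then takes $f(P(\vec x))$ to be the disjunction of $\lambda(\num i)\land\bigwedge_u\theta_{k_u}(x_u)$ over all $i,\vec k$ with $i\Vdash^+P(\vec k)$.

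The truth lemma must then be proved in the uniform form $\PA\vdash\lambda(\num i)\land\bigwedge_u\theta_{k_u}(x_u)\to f(B)(\vec x)$, and likewise with $\neg f(B)$, with free variables. Your version with numerals $\num a$ for $a\in D_i$ is too weak to carry the $\forall$ step, since $\forall x$ ranges over all numbers.
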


\begin{corollary}
    If a modal sentence $A$ is in 
    \[
        \bigcap \{\QPL(\PR_\tau) \mid \tau(v)\ \text{is a}\ \Sigma_2\ \text{formula weakly representing}\ T\ \text{in}\ \PA\},
    \]
    then $A$ is valid in all conversely well-founded finite Kripke frames. 
\end{corollary}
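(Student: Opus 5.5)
The statement is a corollary of Theorem~\ref{main1}, so I will argue by contraposition. Suppose $A$ is not valid in some conversely well-founded finite Kripke frame $\mathcal{F}=(W,R,\{D_w\}_{w\in W})$. By the definition of validity in a frame, there is a model $\mathcal{M}$ based on $\mathcal{F}$ and a world $w\in W$ with $w\nVdash A$. Then $\mathcal{M}$ is a conversely well-founded finite Kripke model, since $W$ and all $D_w$ are finite and $R$ has no infinite increasing chains. In this model the sentence $A$ is not valid.

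Theorem~\ref{main1} applies directly to $\mathcal{M}$. It yields a $\Sigma_2$ formula $\tau(x)$ weakly representing $T$ in $\PA$ and an arithmetical interpretation $f$ based on $\PR_\tau(x)$ with $T\nvdash f(A)$. By the definition of the quantified provability logic, this means $A\notin\QPL(\PR_\tau)$. Hence $A$ is not in the intersection
\[
\bigcap \{\QPL(\PR_\tau) \mid \tau(v)\ \text{is a}\ \Sigma_2\ \text{formula weakly representing}\ T\ \text{in}\ \PA\},
\]
which is exactly the contrapositive of the corollary.

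There is no real obstacle here; all of the work lies in Theorem~\ref{main1}. The only things to check are bookkeeping. First, the notion of ``finite frame'' requires finiteness of both $W$ and each $D_w$, which matches the hypothesis of Theorem~\ref{main1}. Second, a model based on a conversely well-founded frame is itself conversely well-founded, because that property depends only on $R$. Passing to the generated submodel $\mathcal{M}_w$ via Lemma~\ref{gsl} is unnecessary, since Theorem~\ref{main1} does not assume rootedness.
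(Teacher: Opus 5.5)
Your proof is correct and is the argument the paper intends. The paper states the corollary without proof as the immediate contrapositive of Theorem~\ref{main1}, applied to a model on the given frame in which $A$ fails. You are also right that the reduction to a rooted model via Lemma~\ref{gsl} is already carried out inside the proof of Theorem~\ref{main1}, so it is not needed at the level of the corollary.
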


We present an outline of a proof of the theorem. 
Let $\mathcal{M}=(W,R,\{D_w\}_{w\in W},\Vdash)$ be a conversely well-founded finite rooted Kripke model. 
We may assume that $W = \{1, 2, \ldots, n\}$ and $1$ is a root of the model. 
We now adjoin a new point $0$ below $1$ and obtain a new Kripke model $\mathcal{M}^+ =(W^+, R^+, \{D_w\}_{w\in W^+}, \Vdash^+)$ where
\begin{itemize}
    \item $W^+ : = W\cup\{0\}$,
    \item $x R^+ y : \iff (x R y$ and $x,y\in W$) or ($x=0$ and $y=1$), 
    \item $D_0 : = D_1$, 
    \item for each $i \in W^+$, $\ora{k} \in D_i$, and predicate symbol $P$, \\
    $i \Vdash^+ P(\ora{k}) : \iff$ ($i \Vdash P(\ora{k})$ and $i \in W)$ or $(i = 0$ and $1 \Vdash P(\ora{k}))$. 
\end{itemize}

Let $(R^+)^{\mathrm{t}}$ be the transitive closure of $R^+$. 
Then $(R^+)^{\mathrm{t}}$ is transitive and conversely well-founded, and $0 (R^+)^{\mathrm{t}} i$ for every $i\in W$.
By the usual Solovay construction \cite{Solo76} for the transitive conversely well-founded finite Kripke frame $(W^+, (R^+)^{\mathrm{t}})$ of modal propositional logic, there exists a $\Sigma_2$ formula $\lambda(x)$ satisfying the following clauses: 
\begin{enumerate}
    \item $\PA \vdash \exists x\,\lambda(x)$,
    \item $\PA \vdash \forall x\forall y\,(\lambda(x)\land\lambda(y)\to x=y)$,
    \item if $i (R^+)^{\mathrm{t}} j$, then $\PA \vdash \lambda(\num{i}) \to \neg \PR_\alpha(\gn{\neg\lambda(\num{j})})$, 
    \item if $i\neq 0$, then $\PA \vdash \lambda(\num{i}) \to
    \PR_\alpha\Bigl(\gn{\bigvee_{i (R^+)^{\mathrm{t}} j}\lambda(\num{j})}\Bigr)$.
\end{enumerate}

Next, we define a $\Sigma_2$ formula $\gamma(v)$ by
\[
\gamma(v) :\equiv \exists y\exists z\,
\bigl(
\lambda(y)\land y\neq \num{0}\land y (R^+)^{\mathrm{t}} z \land \neg(y R^{+}z)
\land v=\gn{\neg\lambda(\dot{z})}
\bigr).
\]
Let $\tau(v) : \equiv \alpha(v)\lor\gamma(v)$.

The following lemma is proved by exactly simulating the proofs of Lemmas~4.7, 4.9, and 4.10 in \cite{Kura18} with replacing the binary relations $\prec$ and $\prec^\ast$ by $(R^+)^{\mathrm{t}}$ and $R^+$, respectively. 

\begin{lemma}[{\cite[Lemmas~4.7, 4.9, and 4.10]{Kura18}}]\label{lem:tau-basic}
Let $i,j\in W^{+}$.
\begin{enumerate}
    \item if $i\neq 0$, $i (R^+)^{\mathrm{t}} j$, and $\neg(iR^{+}j)$, then $\PA \vdash \lambda(\num{i}) \to \PR_\tau(\gn{\neg\lambda(\num{j})})$, 
    
    \item if $i\neq 0$, then $\PA \vdash \lambda(\num{i}) \to
    \PR_\tau\Bigl(\gn{\bigvee_{i R^{+}j}\lambda(\num{j})}\Bigr)$, 

    \item if $iR^{+}j$, then $\PA \vdash \lambda(\num{i}) \to \neg \PR_\tau(\gn{\neg\lambda(\num{j})})$, 

    \item $\mathbb{N} \models \lambda(\num{0})$, 

    \item for any $\LA$-sentence $\varphi$, $\varphi \in T$, $\mathbb{N} \models \tau(\gn{\varphi})$, and $\PA \vdash \tau(\gn{\varphi})$ are pairwise equivalent. 
\end{enumerate}
\end{lemma}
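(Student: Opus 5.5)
We prove the five clauses by following Lemmas~4.7, 4.9 and 4.10 of \cite{Kura18}, with the Solovay properties 1--4 of $\lambda$ and the definition $\tau \equiv \alpha \lor \gamma$ as the main tools. Two general facts are used throughout. First, since $\tau$ is a disjunction with $\alpha$, $\PA$ proves $\forall v(\alpha(v)\to\tau(v))$, and therefore, formalizing inside $\PA$, $\PR_\alpha(x)\to\PR_\tau(x)$. Second, by Theorem~\ref{Fef}, $\PR_\tau$ satisfies $\DCU{1}$, $\DCG{2}$ and $\SCU$.

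For clause 5, the plan is to show that $\gamma$ is false in $\mathbb N$; the remaining equivalences then come from the fact that $\alpha$ weakly represents $T$. For $\gamma$ to be true we would need a true $\lambda(\num{y})$ with $y\neq 0$. We therefore prove clause 4 first. By property 1 some $\lambda(\num{i})$ is true. If $i\neq 0$, then property 4 gives $\PR_\alpha(\gn{\bigvee_{i(R^+)^{\mathrm t}j}\lambda(\num j)})$, which is a true $\Sigma_1$ sentence and hence $\PA$-provable. The Solovay argument requires soundness, so we use the standard Solovay fact that the limit of the $\Sigma_2$ function is $0$ in $\mathbb N$. This holds because $\alpha$ is $\Sigma_1$ and $T$ is consistent. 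One point needs care: $T$ need not be $\Sigma_1$-sound. Here the height-$\infty$ assumption on $\alpha$ (Lemma~\ref{lem:height-infty}) does the work. It guarantees that $T\nvdash\neg\Con^m_\alpha$ for every $m$, and this is exactly what is needed to show that no $\lambda(\num i)$ with $i\ne0$ is true, since each such $\lambda(\num i)$ implies $\neg\Con_\alpha^{h}$ for a suitable height $h$ of $i$. So clause 4 holds, $\gamma$ is false, and therefore $\mathbb N\models\tau(\gn\varphi)\iff\mathbb N\models\alpha(\gn\varphi)\iff\varphi\in T$. For $\PA\vdash\tau(\gn{\varphi})$: if $\varphi\in T$ this follows from $\PA\vdash\alpha(\gn\varphi)$; conversely, if $\PA\vdash\tau(\gn{\varphi})$ then $\tau(\gn{\varphi})$ is true, so $\varphi\in T$ by the first equivalence.

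Clause 1 follows from the definition of $\gamma$. Working in $\PA$, assume $\lambda(\num i)$. With $y=\num i$ and $z=\num j$ the matrix of $\gamma$ holds, so $\gamma(\gn{\neg\lambda(\num j)})$ and hence $\tau(\gn{\neg\lambda(\num j)})$ hold. A Fefermanian predicate proves each of its axioms, so $\PR_\tau(\gn{\neg\lambda(\num j)})$ follows.

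Clause 2 starts from property 4, which gives $\PR_\alpha$, and hence $\PR_\tau$, of $\bigvl_{i(R^+)^{\mathrm t}j}\lambda(\num j)$ under the hypothesis $\lambda(\num{i})$. For each $j$ with $i(R^+)^{\mathrm t}j$ but not $iR^+j$, clause 1 gives $\PR_\tau(\gn{\neg\lambda(\num j)})$. These $j$ are finitely many, so $\DCG{2}$ together with a $\DCU{1}$ instance of the tautology eliminates those disjuncts and yields $\PR_\tau(\gn{\bigvee_{iR^+j}\lambda(\num j)})$.

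Clause 3 is the main obstacle. We must show that adding the $\gamma$-axioms does not make $\neg\lambda(\num j)$ provable for $iR^+j$. Work in $\PA$ and assume $\lambda(\num i)$. Then by property 2 the $\gamma$-axioms are exactly $\{\neg\lambda(\num z): i(R^+)^{\mathrm t}z,\ \neg iR^+z\}$, a finite, explicitly known set. If $i=0$ this set is empty, since $\gamma$ requires $y\neq\num0$. Hence $\PR_\tau(\gn{\neg\lambda(\num j)})$ implies
\[
\PR_\alpha\Bigl(\gn{\textstyle\bigwedge_{z}\neg\lambda(\num z)\to\neg\lambda(\num j)}\Bigr).
\]
Since $iR^+j$, the element $j$ is not among the excluded $z$. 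We then need $\neg\PR_\alpha(\gn{\lambda(\num j)\to\bigvee_z\lambda(\num z)})$, a small strengthening of property 3. We would obtain it by re-running the Solovay argument. Assuming the displayed provability, together with $\lambda(\num i)$ and uniqueness, the Solovay function would move to one of the $z$ or stay put at $i$, and the second option contradicts $\lambda(\num{i})$ being the limit. Making this precise, including the $\Sigma_2$ details of $\lambda$, is where we follow \cite{Kura18} Lemma~4.10 closely.
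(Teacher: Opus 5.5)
Your decomposition is the route the paper intends; the paper itself gives no argument and only says to transfer Lemmas 4.7, 4.9 and 4.10 of Kurahashi's paper on $\mathbf{K}$. The pieces are:
\begin{itemize}
\item clause 1 from the definition of $\gamma$ and the fact that $\tau$-axioms are $\PR_\tau$-provable;
\item clause 2 from property 4, clause 1 and $\DCG{2}$;
\item clause 3 by replacing $\PR_\tau$ with $\PR_\alpha$ plus the finitely many extra axioms $\neg\lambda(\num{z})$ for $z\in Z:=\{z : i(R^+)^{\mathrm t}z,\ \neg(iR^+z)\}$, where $Z=\emptyset$ if $i=0$;
\item clauses 4--5 from the Solovay argument with height $\infty$ and the falsity of $\gamma$ in $\mathbb N$.
\end{itemize}
Clauses 1, 2 and 5, and the reduction step of clause 3, are fine as stated.

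The step that does not hold up is the end of clause 3. Under $\lambda(\num{i})$ you need $\neg\PR_\alpha(\gn{\lambda(\num{j})\to\bigvee_{z\in Z}\lambda(\num{z})})$. This is not a strengthening of property 3, and the Solovay sketch you give for it is not sound. If the limit is $i$, the function staying at $i$ is exactly what is expected, so it contradicts nothing. Also, an $\alpha$-proof of that implication is not a trigger for a move to any $z$. What such a proof actually gives, via uniqueness, is a proof of $\neg\lambda(\num j)$, that is, a reason to move to $j$, and ruling this out is precisely property 3. Concretely, since $j\notin Z$, property 2 yields $\PA\vdash\bigl(\lambda(\num{j})\to\bigvee_{z\in Z}\lambda(\num{z})\bigr)\to\neg\lambda(\num{j})$. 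By $\DCU{1}$ and $\DCG{2}$ for $\PR_\alpha$ you then get $\PR_\alpha(\gn{\neg\lambda(\num{j})})$. This contradicts property 3, because $iR^+j$ implies $i(R^+)^{\mathrm t}j$. With this two-line replacement, clause 3 is complete.

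A smaller point concerns clause 4. Consistency of $T$ alone does not give $\mathbb N\models\lambda(\num 0)$; it can fail for finite height. The passage to $\neg\Con_\alpha^m$ must go through the true $\Sigma_1$ sentence $\PR_\alpha(\gn{\bigvee_{i(R^+)^{\mathrm t}j}\lambda(\num j)})$, not through the $\Sigma_2$ sentence $\lambda(\num i)$. Let $r(j)$ be the length of the longest $(R^+)^{\mathrm t}$-chain above $j$. Induction on $r(j)$, using property 4, gives $\PA\vdash\lambda(\num j)\to\neg\Con_\alpha^{r(j)+1}$ for $j\neq 0$. Hence $\PA\vdash\PR_\alpha(\gn{\bigvee_{i(R^+)^{\mathrm t}j}\lambda(\num j)})\to\neg\Con_\alpha^{r(i)+1}$. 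The antecedent is $\PA$-provable, so $T\vdash\neg\Con_\alpha^{r(i)+1}$, contradicting height $\infty$.
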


In particular, it follows from the last clause that $\PR_\tau(x)$ is actually a provability predicate of $T$. 

We now combine the above construction with the method of Artemov and Japaridze \cite{AD90}. 
For each $i\in W^{+}$, we assume $D_i\subseteq \omega$ and $0\in D_0$. 
Let $D := \bigcup_{i\in W^{+}} D_i$. 
By \cite{AD90}, for each $k \in D$ there exists a $\Sigma_1$ formula $\theta_k(x)$ satisfying the following lemma. 

\begin{lemma}[{\cite[Lemmas~2, 3, and 4]{AD90}}]\label{lem:AD}
Let $i \in W^+$ and $k, l \in D$. 
\begin{enumerate}
    \item If $k \in D_i$, then $\PA \vdash \lambda(\num{i}) \to \exists x\,\theta_k(x)$,
    \item $\PA \vdash \lambda(\num{i}) \to \forall x \bigl(\bigvee_{k_0 \in D} \theta_{k_0}(x) \bigr)$, 
    \item If $k \neq l$, then $\PA \vdash \forall x \neg \bigl(\theta_k(x) \land \theta_{l}(x) \bigr)$.
\end{enumerate}
\end{lemma}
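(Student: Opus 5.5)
The plan is to define $\theta_k(x)$ directly from the Solovay function $h$ underlying the $\Sigma_2$ formula $\lambda$. I take $\lambda(\num{i})$ to mean ``$\lim_m h(m)=\num{i}$'', as in the usual construction. The natural number $x$ will be assigned a value in $D$ once and for all, at stage $x$, using only the world $h(x)$ that is current at that stage.

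For each $i\in W^+$, fix an enumeration $D_i=\{k^i_0,\ldots,k^i_{r_i}\}$. Define a primitive recursive function $g$ by
\[
g(x):=k^{h(x)}_{\,x \bmod (r_{h(x)}+1)}.
\]
Then put $\theta_k(x):\equiv g(x)=\num{k}$, written with the standard $\Sigma_1$ definition of $g$. Since $g$ is primitive recursive, $\theta_k$ is $\Sigma_1$; it is in fact $\Delta_1$ in $\PA$. This uses only that $h$ is primitive recursive and that $W^+$ and all $D_i$ are finite, so the table $i\mapsto (k^i_0,\ldots,k^i_{r_i})$ is a finite object that can be hard-coded.

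Clauses 2 and 3 are then immediate and do not even need the hypothesis $\lambda(\num{i})$.
\begin{itemize}
\item $\PA$ proves that $g$ is total and takes values in the finite set $D$. This gives $\forall x\bigvee_{k_0\in D}\theta_{k_0}(x)$, so clause 2 holds.
\item $\PA$ proves that $g$ is single-valued. Hence for $k\neq l$ we have $\PA\vdash\forall x\,\neg(\theta_k(x)\land\theta_l(x))$, which is clause 3.
\end{itemize}

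Clause 1 is where the limit behaviour of $h$ is used. Argue in $\PA$ and assume $\lambda(\num{i})$, so there is $m$ such that $h(n)=\num{i}$ for all $n\ge m$. Let $k=k^i_s\in D_i$. Choose $x\ge m$ with $x\equiv s \pmod{r_i+1}$; this is elementary arithmetic. Then $g(x)=k^i_s=k$, so $\exists x\,\theta_k(x)$ holds. This is a fixed finite case distinction over $i$ and $s$, so it is formalizable without difficulty.

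The main point needing care is not these three clauses but the way the construction will be used later. The truth definition transferred to arithmetic should make quantifiers range, under $\lambda(\num{i})$, over exactly $D_i$. That seems to require the additional property that, under $\lambda(\num{i})$, every value $g(x)$ lies in $D_i$.

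My plan for that is to exploit the Solovay property that $h$ only moves along $(R^+)^{\mathrm t}$, together with the frame condition $D_u\subseteq D_v$ whenever $uRv$; note also $D_0=D_1$. Together these give $D_{h(x)}\subseteq D_{\lim h}$ for every $x$. I expect verifying this inside $\PA$, uniformly via the finitely many transitions of $h$, to be the only nontrivial step. It is also where I would check that the choices $0\in D_0$ and $h(0)=0$ are compatible.
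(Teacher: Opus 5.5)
Your proposal is correct. The paper gives no proof of this lemma; it imports the formulas $\theta_k$ and the three clauses from Artemov and Japaridze. Your explicit construction is therefore a self-contained substitute rather than a reconstruction of an argument in the text.

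You assign to $x$ an element of $D_{h(x)}$ by cycling through that finite set. This keeps $\theta_k$ $\Sigma_1$ (since $h$ is primitive recursive), which is exactly what the modal case of Lemma~\ref{lem:truth} needs via $\SCU$ for $\PR_\tau$. Clauses 2 and 3 are then trivial, and clause 1 follows because $h$ is eventually equal to $i$ under $\lambda(\num{i})$.

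The main thing your route buys is that it exposes and settles the point you raise at the end, and that point is genuinely needed. The $\forall$ and $\exists$ cases of Lemma~\ref{lem:truth} use $\PA \vdash \lambda(\num{i}) \to \forall x \bigvee_{k_0 \in D_i}\theta_{k_0}(x)$. The version over all of $D$ printed in clause 2 would not suffice, since a $y$ with $\theta_k(y)$ for some $k \notin D_i$ would break the step from $i \Vdash \forall y\, C$ to $\forall y\, f(C)$. Your argument for the stronger property is a routine induction in $\PA$: $h$ moves only along $(R^+)^{\mathrm t}$, and domains grow along $R^+$ (with $D_0 = D_1$), so $D_{h(x)} \subseteq D_i$ whenever $i$ is the limit.

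Your construction also explains why, with varying domains, $\theta_k$ should consult the current value $h(x)$. That is what reconciles $\Sigma_1$-ness with exhaustivity over the limit domain. By contrast, in Section~\ref{sec:main2} constant domains allow $\theta_k$ that do not depend on $h$.

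The citation route buys only brevity. Note also that your construction does not use the normalisation $0 \in D_0$.
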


We now define an arithmetical interpretation $f$ based on $\PR_\tau(x)$ as follows.
For each $m$-ary predicate symbol $P$, define
\[
f\bigl(P(x_0,\ldots,x_{m-1})\bigr)
: \equiv \bigvee_{\substack{i\in W\  \& \ k_0,\ldots, k_{m-1} \in D_i \\ i \Vdash^{+} P(k_0, \ldots, k_{m-1})}}
\bigl(\lambda(\num{i}) \land \theta_{k_0}(x_0) \land \cdots \land \theta_{k_{m-1}}(x_{m-1}) \bigr).
\]

 The following lemma is the analogue of Lemma 5 of \cite{AD90} in our setting.

\begin{lemma}[{\cite[Lemma 5]{AD90}}]\label{lem:truth}
Let $B(x_0, \ldots, x_{m-1})$ be any modal formula, $i\in W$, and $k_0, \ldots, k_{m-1} \in D_i$. 
\begin{enumerate}
    \item If $i \Vdash^{+} B(k_0, \ldots, k_{m-1})$, then 
    \[
        \PA \vdash \lambda(\num{i}) \land \theta_{k_0}(x_0) \land \cdots \land \theta_{k_{m-1}}(x_{m-1}) \to f(B(x_0, \ldots, x_{m-1})).
    \]

    \item If $i \nVdash^{+} B(k_0, \ldots, k_{m-1})$, then
    \[
        \PA \vdash \lambda(\num{i}) \land \theta_{k_0}(x_0) \land \cdots \land \theta_{k_{m-1}}(x_{m-1}) \to \neg f(B(x_0, \ldots, x_{m-1})).
    \]
\end{enumerate}
\end{lemma}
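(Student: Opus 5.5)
We prove both clauses simultaneously by induction on the construction of $B$. For brevity, write $\Theta_{\vec k}(\vec x)$ for $\theta_{k_0}(x_0)\land\cdots\land\theta_{k_{m-1}}(x_{m-1})$.

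\emph{Atomic case.} Let $B \equiv P(\vec x)$.
If $i \Vdash^+ P(\vec k)$, then $\lambda(\num{i})\land\Theta_{\vec k}(\vec x)$ is literally one of the disjuncts of $f(P(\vec x))$, so clause 1 is immediate.
If $i \nVdash^+ P(\vec k)$, consider any disjunct $\lambda(\num{j})\land\Theta_{\vec l}(\vec x)$ of $f(P(\vec x))$. Either $j\neq i$ or $\vec l\neq\vec k$. In the first case, clause 2 of the properties of $\lambda$ makes the disjunct contradict $\lambda(\num{i})$. In the second case, Lemma \ref{lem:AD}.3 makes it contradict $\Theta_{\vec k}(\vec x)$. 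Hence clause 2 holds.
The cases $\top$ and $\bot$ are trivial, and the propositional connectives follow directly from the induction hypotheses (with clauses 1 and 2 swapping roles for $\neg$ and for the antecedent of $\to$).

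\emph{Quantifier cases.} Let $B \equiv \forall y\, C(\vec x, y)$.
If $i\Vdash^+ B(\vec k)$, then $i\Vdash^+ C(\vec k,l)$ for every $l\in D_i$. The difficulty is that Lemma \ref{lem:AD}.2 only tells us that every $y$ satisfies $\theta_l(y)$ for some $l\in D$, not necessarily some $l\in D_i$. So I would first check, or strengthen Lemma \ref{lem:AD}.2 as in \cite{AD90}, to the form
\[
\PA\vdash\lambda(\num{i})\to\forall x\bigvee_{l\in D_i}\theta_l(x).
\]
This should be how the $\theta_k$ are built: $\theta_k$ picks out elements according to the limit world, with $D_i$ monotone along $R$. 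With this form, argue inside $\PA$: assume $\lambda(\num{i})\land\Theta_{\vec k}(\vec x)$ and take any $y$. Then $\theta_l(y)$ holds for some $l\in D_i$, and the induction hypothesis gives $f(C)(\vec x,y)$.
If $i\nVdash^+ B(\vec k)$, pick $l\in D_i$ with $i\nVdash^+ C(\vec k,l)$. Lemma \ref{lem:AD}.1 supplies a $y$ with $\theta_l(y)$, and the induction hypothesis gives $\neg f(C)(\vec x,y)$.
The case of $\exists$ is dual.

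\emph{Modal case.} Let $B\equiv\Box C(\vec x)$, so $f(B)\equiv\PR_\tau(\gn{f(C)(\dot{\vec x})})$.

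Suppose first $i\Vdash^+\Box C(\vec k)$, so every $j$ with $iR^+j$ satisfies $j\Vdash^+ C(\vec k)$. Note $\vec k\in D_j$ by domain monotonicity. The induction hypothesis gives
\[
\PA\vdash\lambda(\num{j})\land\Theta_{\vec k}(\vec x)\to f(C)(\vec x).
\]
Since $\theta_k$ is $\Sigma_1$, $\SCU$ for $\PR_\tau$ (Theorem \ref{Fef}) yields $\Theta_{\vec k}(\vec x)\to\PR_\tau(\gn{\Theta_{\vec k}(\dot{\vec x})})$. Lemma \ref{lem:tau-basic}.2 gives $\lambda(\num{i})\to\PR_\tau(\gn{\bigvee_{iR^+j}\lambda(\num{j})})$. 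Combining these using $\DCU{1}$ and $\DCG{2}$ (to push the provable implication under $\PR_\tau$, uniformly in $\vec x$), we get
\[
\PA\vdash\lambda(\num{i})\land\Theta_{\vec k}(\vec x)\to\PR_\tau(\gn{f(C)(\dot{\vec x})}).
\]

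Suppose next $i\nVdash^+\Box C(\vec k)$. Pick $j$ with $iR^+j$ and $j\nVdash^+ C(\vec k)$. The induction hypothesis gives
\[
\PA\vdash f(C)(\vec x)\to\neg(\lambda(\num{j})\land\Theta_{\vec k}(\vec x)),
\]
so, under $\Theta_{\vec k}(\vec x)$ provable in $\tau$ (again by $\SCU$), $\PR_\tau(\gn{f(C)(\dot{\vec x})})$ implies $\PR_\tau(\gn{\neg\lambda(\num{j})})$. This contradicts Lemma \ref{lem:tau-basic}.3 under $\lambda(\num{i})$.

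Here $i\neq0$ is needed for Lemma \ref{lem:tau-basic}.2, which is why the statement restricts to $i\in W$. I expect the main obstacles to be the quantifier case, namely securing the relativized version of Lemma \ref{lem:AD}.2, and checking carefully that the uniform (parameterized) use of $\SCU$ and $\DCG{2}$ handles the free variables $\vec x$ correctly in the modal case.
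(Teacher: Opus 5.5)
Your proof is correct and follows the paper's route. The paper also argues by induction on $B$: the quantifier cases come from Lemma~\ref{lem:AD}, and the modal case comes from Lemma~\ref{lem:tau-basic}.2--3 together with formalized $\Sigma_1$-completeness for the $\theta_k$ (plus $\DCU{1}$ and $\DCG{2}$, which the paper leaves implicit).

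Your worry about the quantifier case is justified. The $\forall$ case genuinely needs the relativized covering $\PA\vdash\lambda(\num{i})\to\forall x\bigvee_{l\in D_i}\theta_l(x)$, equivalently $\PA\vdash\lambda(\num{i})\to\neg\exists x\,\theta_l(x)$ for $l\notin D_i$. The printed Lemma~\ref{lem:AD}.2, with $D$ in place of $D_i$, does not suffice, so the paper's appeal to that lemma must be read in the relativized form you propose.
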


The proof proceeds by induction on the construction of $B$, exactly as in \cite{AD90}.
The quantifier cases follow from Lemma \ref{lem:AD}.
The modal case follows from Lemma~\ref{lem:tau-basic} together with the formalized $\Sigma_1$-completeness $\PA \vdash \theta_k(x) \to \PR_{\tau}(\gn{\theta_k(\dot{x})})$.

\begin{proof}[Proof of Theorem~\ref{main1}]
Suppose that a modal sentence $A$ is not valid in a conversely well-founded finite Kripke model.
By the generated submodel lemma (Lemma~\ref{gsl}), we may assume that $\mathcal{M}=(W,R,\{D_w\}_{w\in W},\Vdash)$ is a conversely well-founded finite rooted Kripke model and that $1 \in W$ is its root satisfying $1 \nVdash A$.

By the above construction, we obtain a formula $\lambda(x)$, a $\Sigma_2$ formula $\tau(v)$, and an arithmetical interpretation $f$ based on $\PR_\tau(x)$.
By Lemma~\ref{lem:truth}, we have $\PA \vdash \lambda(\num{1}) \to \neg f(A)$. 
Hence $\PA \vdash \neg \PR_\tau(\gn{\neg\lambda(\num{1})}) \to \neg \PR_\tau(\gn{f(A)})$.
Since $0R^{+}1$, it follows from Lemma~\ref{lem:tau-basic}.3 that $\PA \vdash \lambda(\num{0}) \to \neg \PR_\tau(\gn{\neg\lambda(\num{1})})$. 
Moreover, by the construction of $\lambda(x)$, we have $\mathbb{N} \models \lambda(\num{0})$.
Therefore, $\mathbb{N} \models \neg \PR_\tau(\gn{f(A)})$.
By Lemma \ref{lem:tau-basic}.5, this implies $T \nvdash f(A)$.
\end{proof}

Theorem~\ref{main1} applies to conversely well-founded finite Kripke
models. 
In its proof, this assumption is used in order to apply a Solovay-style construction to the transitive closure of the accessibility relation. 
It is natural to ask whether this restriction is essential.

\begin{problem}\label{prob:remove-cwf}
Can the conversely well-foundedness assumption in Theorem~\ref{main1}
be removed?
\end{problem}

\section{Arithmetical completeness with respect to $\Sigma_1$ provability predicates}\label{sec:main2}

In this section, we prove that for any finite constant domain Kripke model of quantified modal logic, there exists a $\Sigma_1$ provability predicate $\PR_T(x)$ of $T$ satisfying $\DCG{2}$ and an arithmetical interpretation based on $\PR_T(x)$ such that the model can be embedded into arithmetic.
The main difference from the result in Section~\ref{sec:main1} is that the provability predicate $\PR_T(x)$ obtained in this section is not Fefermanian.
As discussed in Subsection~\ref{ssec:ai}, the provability predicate constructed here does not necessarily satisfy $\DCU{1}$.
Another important point is that, unlike in Section~\ref{sec:main1}, we assume that Kripke models have constant domains rather than being conversely well-founded.

In \cite{Kura24}, a method was developed for embedding arbitrary finite Kripke models of propositional modal logic into arithmetic by using $\Sigma_1$ provability predicates.
However, this method cannot be directly extended to quantified modal logic.
The reason is that, when considering arithmetical interpretations of quantified modal logic, one must carefully deal with free variables and substitutions of numerals.
To carry out the argument of \cite{Kura24} in this setting, the provability predicate would be required to satisfy both $\DCU{1}$ and $\DCU{2}$.
However, the provability predicates we aim to construct cannot be guaranteed to satisfy both of these conditions as we mentioned in Subsection \ref{ssec:ai}.
Therefore, it is necessary to develop a different proof method.

Our strategy is as follows. 
We first construct a function $h$ corresponding to a Solovay function. 
The function $h$ starts with $h(0)=0$ and makes at most one transition to a nonzero value. 
We then define $\lambda(x)$ by $\exists y\,(h(y)=x)$.
In contrast to the construction of Artemov and Japaridze~\cite{AD90},
we can define the formulas $\theta_k(x)$ in a simpler way by using the
assumption that the given finite Kripke frame is constant domain.

A crucial point is that, although the definition of $h$ involves complicated conditions related to quantifiers, substitutions, and tautological consequence, the function $h$ is in fact primitive recursive. 
This fact plays a key role in our proof. 
Since the proof of this fact is somewhat lengthy, we postpone it to the Appendix.
As a consequence, $\lambda(x)$ is a $\Sigma_1$ formula.
With these preparations in hand, it remains to show that finite constant domain Kripke models can in fact be embedded into arithmetic.

Before proving the theorem, we prepare several notions and notation. 
An $\LA$-formula $\varphi$ is called \emph{propositionally atomic}
if either $\varphi$ is atomic or $\varphi$ is of the form $Qx\,\psi$, where $Q\in\{\forall,\exists\}$.
For each propositionally atomic formula $\varphi$, we fix a distinct propositional variable $p_\varphi$.
We define a primitive recursive injection $I$ from $\LA$-formulas to propositional formulas recursively as follows:
\begin{itemize}
    \item $I(\varphi)$ is $p_\varphi$ if $\varphi$ is propositionally atomic, 

    \item $I(\varphi\circ\psi)$ is $I(\varphi)\circ I(\psi)$ for $\circ\in\{\land, \lor, \to\}$, 
    
    \item $I(\neg\varphi)$ is $\neg I(\varphi)$.
\end{itemize}
Let $X$ be a finite set of $\LA$-formulas.
An $\LA$-formula $\varphi$ is called a \emph{tautological consequence} (\emph{t.c.}) of $X$ if $\bigwedge_{\psi\in X} I(\psi)\to I(\varphi)$ is a propositional tautology.
We write $X \vdash^{\mathrm{tc}} \varphi$ to mean that $\varphi$ is a t.c.~of $X$.

For each $n\in\omega$, let $F_n$ be the set of all $\LA$-formulas
whose G\"odel number is less than or equal to $n$.
We may assume that $F_0 = \emptyset$.
For each $n\in\omega$, let
\[
P_{T,n}:=\{\varphi \mid \mathbb{N} \models \exists y\le n\,\Proof_T(\ulcorner\varphi\urcorner,y)\},
\]
where $\Proof_T(x,y)$ is a standard primitive recursive proof predicate of $T$ introduced in Subsection \ref{ssec:pp}.
We may assume $P_{T,n}\subseteq F_n$.
If $P_{T,n}\vdash^{\mathrm{tc}} \varphi$, then $\varphi$ is provable in $T$.
These notions and facts can be formalized in $\PA$.

We are ready to state our main theorem of this section. 

\begin{theorem}\label{main2}
    Suppose that a modal sentence $A$ is not valid in a finite constant domain Kripke model. 
    Then, there exists a $\Sigma_1$ provability predicate $\PR_T(x)$ satisfying $\DCG{2}$ and an arithmetical interpretation $f$ based on $\PR_T(x)$ such that $T \nvdash f(A)$. 
\end{theorem}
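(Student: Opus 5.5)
We will follow the strategy sketched at the beginning of this section, adapting the method of \cite{Kura24} to the quantified setting. By Lemma~\ref{gsl} we may assume that $\mathcal{M}=(W,R,D,\Vdash)$ is a finite rooted constant domain model with $W=\{1,\ldots,n\}$, root $1$ and $1\nVdash A$. We may also assume $D=\{0,\ldots,d-1\}\subseteq\omega$.

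Because the domain is constant, no Solovay-style control of the domains is needed. We take $\theta_k(x):\equiv x=\num{k}$ for $k<d-1$ and $\theta_{d-1}(x):\equiv x\ge\num{d-1}$. These are $\Delta_0$ formulas, they are provably pairwise disjoint and exhaustive, and each is provably satisfiable. So the analogue of Lemma~\ref{lem:AD} is trivial in every world.

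Next we define a function $h$ by the recursion theorem. We set $h(0)=0$, and $h$ stays at $0$ while it runs through $T$-proofs. At stage $m$ we use $P_{T,m}$ together with the predicate $\PR_T(x)$ that we are simultaneously defining, roughly
\[
\PR_T(x):\equiv\exists m\,\bigl[(h(m)=0\wedge P_{T,m}\vdash^{\mathrm{tc}}x)\vee(h(m)=i\neq0\wedge x\in S_i)\bigr],
\]
where $S_i$ is the set of sentences that ``world $i$ should prove''. Here $S_i$ consists of the t.c.\ closure of $\bigvee_{iRj}\lambda(\num j)$ together with all formulas of $T$, but with no $\neg\lambda(\num j)$ for $iRj$. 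The transition rule is the following. If at stage $m$ the set $P_{T,m}$ t.c.-proves some $\neg\lambda(\num i)$ or some ``dangerous'' instance, then $h$ jumps to $i$. A dangerous instance is one that would force a $\Box$-formula under a substitution of numerals, which the model at $i$ refutes.

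Using the formalized tautological closure, we then check three things.
\begin{enumerate}
\item $\PR_T$ is $\Sigma_1$ and satisfies $\DCG{2}$, because at every stage the provable set is closed under t.c.
\item $\PR_T$ is a provability predicate of $T$: in $\mathbb N$ we have $h\equiv0$, since $T\nvdash\neg\lambda(\num i)$ follows by the usual Solovay argument.
\item The analogues of Lemma~\ref{lem:tau-basic}.2 and 3 hold for $R$ itself, because after the jump the provable set is exactly prescribed by $S_i$. This prescription is what allows arbitrary non-transitive and non-well-founded $R$, as in \cite{Kura24}.
\end{enumerate}

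We define $f(P(\vec x))$ as in Section~\ref{sec:main1}, with $\lambda(\num i)\wedge\bigwedge\theta_{k_r}(x_r)$. We then prove the analogue of Lemma~\ref{lem:truth} by induction on $B$. The quantifier cases use the trivial $\theta_k$ and constancy of $D$. In the case $\Box B$ we must show, in $\PA$,
\[
\lambda(\num i)\wedge\textstyle\bigwedge_r\theta_{k_r}(x_r)\to\PR_T(\gn{f(B)(\dot{\vec x})}).
\]
Here $\PR_T$ lacks $\DCU{1}$, so we cannot simply push $\theta_k(x)$ inside the box. Instead we design $S_i$ to contain directly all sentences $f(B)(\num{\vec a})$ such that $\vec a$ lies in the classes $\theta_{\vec k}$ and $j\Vdash B(\vec k)$ for all $j$ with $iRj$; membership is decided by the finite model. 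With this design both parts of the modal case follow from the definition of $S_i$ and the first three facts.

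Finally, $\PA\vdash\lambda(\num1)\to\neg f(A)$. We add a virtual point $0$ with $0R1$ exactly as in Section~\ref{sec:main1}; since $\mathbb N\models\lambda(\num0)$, we get $\mathbb N\models\neg\PR_T(\gn{f(A)})$, hence $T\nvdash f(A)$.

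\textbf{Main obstacle.} The hard step is showing that $h$, and hence $\lambda(x)\equiv\exists y\,(h(y)=x)$, is primitive recursive even though $S_i$ refers to all numeral substitution instances of $f(B)$, a set that is infinite and self-referential through $\PR_T$. We plan to bound the search at stage $m$ to formulas in $F_m$. We then decide ``$\varphi$ is an instance $f(B)(\num{\vec a})$'' by parsing $\varphi$ syntactically against the finitely many subformulas of $A$. This is the content deferred to the Appendix.
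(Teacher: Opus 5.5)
Your architecture matches the paper's: a Solovay-type $h$ that leaves $0$ at most once, post-jump output of t.c.\ consequences plus numeral instances, trivial $\theta_k$ from the constant domain, the same $f$, and an embedding lemma by induction. But the step that carries the theorem is missing, namely the negative half of the $\Box$ case. If $iRj$ and $j\nVdash C(\vec k)$, you must show that after the jump to $i$ no $f(C)(\num{\vec a})$ with $\theta_{\vec k}(\num{\vec a})$ is ever output.

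You say this follows from the definition of $S_i$. But $S_i$ is a t.c.-closure, so what it omits cannot be stipulated. A jump implies $\neg\Con_T$, so the t.c.-closure of ``all formulas of $T$'' is everything. Only the stage-bounded $P_{T,l-1}$ may be used, and the omission has to be \emph{proved} from the transition rule. As described, your rule tests what $P_{T,m}$ t.c.-proves on its own. Nothing in it anticipates derivations combining $P_{T,l-1}$ with the instances added after the jump. For instance, $P_{T,l-1}$ may contain $f(B)(\num b)\to f(C)(\num a)$ with $B$ true at every successor of $i$.

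The paper's activation condition is built as the exact dual of the post-jump ``ready'' condition. Namely, $j$ is activated at $l$ if $P_{T,l}$ contains $\forall\vec x_t(\lambda(\num j)\wedge\theta_{\vec k_t}(\vec x_t)\to\psi_t)$ for $1\le t\le s$ and $\forall\vec x_0(\lambda(\num j)\wedge\theta_{\vec k_0}(\vec x_0)\to\neg\varphi)$, and there are numbers $\vec b_t$ in the right classes with $P_{T,l}\cup\{\psi_t(\num{\vec b_t})\}\vdash^{\mathrm{tc}}\varphi(\num{\vec b_0})$. An output of $f(C)(\num{\vec a})$ after the jump is then literally a witness that $j$ was activated at $l-1$, contradicting $h(l)=0$. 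Both conditions speak of $T$-provable implications from $\lambda(\num j)$ over all formulas in $F_{l-1}$, not of forcing in $\mathcal M$ or of $f$. So $h$ never refers to the predicate being built.

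This dual condition necessarily involves an \emph{unbounded} search over the numeral tuples $\vec b$. The positive $\Box$ case forces the post-jump output to contain $f(B)(\num{\vec a})$ for all $\vec a$ in the infinite class of your $\theta_{d-1}$. So derivations with arbitrarily large numerals must be detected before the jump. That is what threatens primitive recursiveness of $h$, and hence the $\Sigma_1$-ness of $\lambda$ and of your predicate. Recognizing a formula as an instance of a subformula of $A$ is not the issue; that check is trivially primitive recursive. Bounding the search to $F_m$ discards exactly these derivations, and you give no argument that they can be shrunk.

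The paper bounds only the uninstantiated formulas by $F_l$ and decides the existential over $\vec b$ as follows:
\begin{itemize}
\item it reduces the t.c.\ question to which substituted propositional atoms coincide;
\item it expresses each coincidence by a $\{0,\su\}$-formula;
\item it enumerates the finitely many good equivalence relations on the atom set;
\item it uses the primitive recursive decision procedure for $(\mathbb N,0,\su)$.
\end{itemize}

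Your virtual point $0$ is harmless but unnecessary: $\PA\vdash\lambda(\num1)\to\neg f(A)$ together with $T\nvdash\neg\lambda(\num1)$ already gives $T\nvdash f(A)$.
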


\begin{corollary}
    If a modal sentence $A$ is in 
    \[
        \bigcap \{\QPL(\PR_T) \mid \PR_T(x)\ \text{is a}\ \Sigma_1\ \text{provability predicate of}\ T\ \text{satisfying}\ \DCG{2}\},
    \]
    then $A$ is valid in all finite constant domain Kripke frames. 
\end{corollary}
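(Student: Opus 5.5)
We will adapt the construction of \cite{Kura24} for $\mathbf{K}$, following the strategy described above. By Lemma~\ref{gsl} we may assume that $\mathcal{M}=(W,R,D,\Vdash)$ is a finite rooted constant domain model with $W=\{1,\ldots,n\}$, root $1$, $1\nVdash A$, and $D=\{0,1,\ldots,d\}\subseteq\omega$.

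\textbf{Domain formulas.} Because the domain is constant, we do not need the Artemov--Japaridze formulas. We simply put $\theta_k(x):\equiv x=\num{k}$ for $k<d$ and $\theta_d(x):\equiv x\geq\num{d}$. These are $\Delta_0$, pairwise $\PA$-disjoint and jointly exhaustive, so the analogue of Lemma~\ref{lem:AD} holds outright at every world.

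\textbf{The Solovay-type function and the predicate.} Define a primitive recursive $h$ with $h(0)=0$. At stage $m$ the function stays at $0$ unless a $T$-proof with code $\le m$ witnesses $\neg\lambda(\num{i})$, or some ``forbidden'' formula. In that case $h$ jumps once to a suitable $i\in W$ and then stays there. Let $\lambda(x):\equiv\exists y\,(h(y)=x)$; this is $\Sigma_1$.

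The provability predicate is $\PR_T(x):\equiv\exists m\,\mathrm{Prf}(x,m)$, where $\mathrm{Prf}(x,m)$ means one of the following:
\begin{itemize}
\item $h(m)=0$ and $x\in P_{T,m}$, possibly closed under tautological consequence in $F_m$;
\item $h(m)=i\neq0$ and $x$ is a tautological consequence of $P_{T,m}$ together with the finitely many sentences of the form $\bigvee_{iRj}\lambda(\num j)$, the relevant instances of $\theta$'s, and the formulas that are $f$-images of $\Box$-successor facts at $i$.
\end{itemize}
Defining the set through $\vdash^{\mathrm{tc}}$ gives $\DCG{2}$: closure under modus ponens is built in, provided the stage-$m$ set is monotone in $m$ and, once $h$ has jumped, the fixed finite set of extra axioms is added at every later stage.

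Standard arguments then give the following.
\begin{itemize}
\item Since $T\nvdash\neg\lambda(\num i)$, $h$ never jumps in $\mathbb N$, so $\mathbb N\models\lambda(\num0)$ and $\lambda(\num i)$ is false for $i\ne0$.
\item Hence $\PR_T$ is a provability predicate of $T$: in $\mathbb N$ it enumerates exactly the $T$-theorems closed under t.c.
\item $\PA\vdash\lambda(\num i)\to\PR_T(\gn{\bigvee_{iRj}\lambda(\num j)})$.
\item If $iRj$, then $\PA\vdash\lambda(\num i)\to\neg\PR_T(\gn{\neg\lambda(\num j)})$. The idea is that a proof of $\neg\lambda(\num j)$ at a stage after the jump to $i$ would force a contradiction with the choice of the jump.
\end{itemize}

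\textbf{Interpretation and truth lemma.} Define
\[
f(P(\vec x)):\equiv\bigvee_{i\in W,\ i\Vdash P(\vec k)}\bigl(\lambda(\num i)\wedge\bigwedge\theta_{k_l}(x_l)\bigr),
\]
with $0$ treated like $1$ as in $\mathcal M^+$. We then prove the analogue of Lemma~\ref{lem:truth} by induction on $B$. The quantifier cases are immediate from the constant domain and the $\theta_k$.

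\textbf{Main obstacle: the modal case.} Here $f(\Box B)$ is $\PR_T(\gn{f(B)(\dot{\vec x})})$, and $\DCU{1}$ is unavailable. So we cannot pass from $\PA\vdash\lambda(\num j)\wedge\vec\theta\to f(B)$ to provability of the substituted instance $f(B)(\num{\vec n})$ for all $\vec n$.

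Our plan is to hard-wire this into $h$ and $\mathrm{Prf}$. After the jump to $i$, $\mathrm{Prf}$ accepts every numeral instance $f(B)(\num{\vec n})$, for $B$ a subformula of $A$, whenever the $\theta$-class $\vec k$ of $\vec n$ satisfies $j\Vdash B(\vec k)$ for all $j$ with $iRj$.

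This set is infinite because of the parameters $\vec n$, but it is primitive recursively decidable: given $x$, decide whether $x$ is such an instance, strip the numerals, and compute the classes. This is where the Appendix's primitive recursiveness of $h$ enters.

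For the negative direction, we let $h$ jump to $j$ when some instance $f(B)(\num{\vec n})$ with $j\nVdash B(\vec k)$ is proved at stage $0$. Then under $\lambda(\num i)$ such an instance cannot be $\PR_T$-provable, by a t.c.\ argument: every accepted set is consistent with $\lambda(\num j)\wedge\vec\theta$ propositionally.

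Finally, $\PA\vdash\lambda(\num1)\to\neg f(A)$. Exactly as in the proof of Theorem~\ref{main1}, truth of $\lambda(\num0)$ yields $\mathbb N\models\neg\PR_T(\gn{f(A)})$, hence $T\nvdash f(A)$.
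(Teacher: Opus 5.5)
The corollary is just the contrapositive of Theorem~\ref{main2}, and that is the whole of the paper's argument. If $A$ fails in some finite constant domain frame, it fails at a world of some model on that frame. Theorem~\ref{main2} then supplies a $\Sigma_1$ provability predicate $\PR_T(x)$ satisfying $\DCG{2}$ and an $f$ with $T\nvdash f(A)$, hence $A\notin\QPL(\PR_T)$. You instead re-prove Theorem~\ref{main2} with a different construction, and as written that construction fails.

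After the jump to $i\neq0$, your $\mathrm{Prf}(x,m)$ accepts tautological consequences of the \emph{growing} set $P_{T,m}$. Every jump of your $h$ is triggered by a $T$-proof of something that $T$ refutes from a true $\Sigma_1$ sentence ($\lambda(\num i)$, or $\lambda(\num j)\wedge\vec\theta(\num{\vec n})$). So $\PA$ proves that a jump implies $\neg\Con_T$. Then for large $m$ the set $P_{T,m}$ contains both $0=1$ and $\neg\, 0=1$, every formula becomes a t.c., and $\PA\vdash\lambda(\num i)\to\PR_T(\gn{\neg\lambda(\num j)})$. Combined with the property you need for $iRj$, this gives $\PA\vdash\neg\lambda(\num i)$, contradicting $T\nvdash\neg\lambda(\num i)$. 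The paper avoids this by freezing $P_{T,l-1}$, the proofs found before the switch, in Procedure~2.

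Even with that repaired, the negative $\Box$ case is not secured. After the jump you accept t.c.\ combinations of $P_{T,\cdot}$ with infinitely many numeral instances. Your jump condition, however, reacts only to direct proofs of single forbidden instances. The claim that every accepted set is propositionally consistent with $\lambda(\num j)\wedge\vec\theta$ is asserted, not proved.

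The missing idea is the paper's activation condition, built to mirror the readiness condition of Procedure~2 exactly. $j$ is activated at $l$ if some t.c.\ combination of $P_{T,l}$ with instances $\psi_t(\num{\vec b_t})$ yields an instance $\varphi(\num{\vec b_0})$, where the $\lambda(\num j)$-relativized universal closures of the $\psi_t$ are in $P_{T,l}$ and the relativized negation of $\varphi$ is in $P_{T,l}$. Any bad output of Procedure~2 would then have activated $j$ at $l-1$, contradicting $h(l)=0$ (Case~2 in Lemma~\ref{lem:embedding}). The positive case likewise uses relativized closures in $P_{T,l-1}$ rather than hard-wired $f$-images. So $g$ and $h$ depend neither on $A$ nor on $f$, and there is no circularity through $\PR_T$.

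The unbounded search over $\vec b$ in the activation condition is exactly what the Appendix's reduction to $(\mathbb N,0,\su)$ handles. Deciding whether $x$ is a numeral instance, which is where you invoke the Appendix, is trivially primitive recursive.

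Finally, $\lambda(\num0)$ is trivially true here ($h(0)=0$) and carries no information. The closing step should instead use $\PA\vdash\lambda(\num1)\to\neg f(A)$ together with $T\nvdash\neg\lambda(\num1)$ (Lemma~\ref{Proph3}.2).
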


By combining this corollary to Theorem \ref{thm:WZ}, we obtain the following corollary. 

\begin{corollary}
    Let $A$ be a modal sentence in which only one variable appears. 
    If $A$ is in
    \[
        \bigcap \{\QPL(\PR_T) \mid \PR_T(x)\ \text{is a}\ \Sigma_1\ \text{provability predicate of}\ T\ \text{satisfying}\ \DCG{2}\},
    \]
    then $\QKbf \vdash A$. 
\end{corollary}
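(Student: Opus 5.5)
We argue by contraposition, combining Theorem~\ref{thm:WZ} with Theorem~\ref{main2} (equivalently, with the preceding corollary). Suppose $A$ is a modal sentence in which only one variable appears, and suppose $\QKbf \nvdash A$. The goal is to find a $\Sigma_1$ provability predicate $\PR_T(x)$ of $T$ satisfying $\DCG{2}$ such that $A \notin \QPL(\PR_T)$. That will show $A$ is not in the displayed intersection.

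First, since $A$ contains only one variable, Theorem~\ref{thm:WZ} applies. The failure of $\QKbf \vdash A$ then yields a finite constant domain Kripke frame $\mathcal{F}=(W,R,D)$ in which $A$ is not valid. By the definition of validity in a frame, there is a Kripke model $\mathcal{M}$ based on $\mathcal{F}$ in which $A$ is not valid. This $\mathcal{M}$ is itself a finite constant domain Kripke model, since finiteness and constancy of domains are properties of the underlying frame.

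Second, we apply Theorem~\ref{main2} to $\mathcal{M}$. It gives a $\Sigma_1$ provability predicate $\PR_T(x)$ of $T$ satisfying $\DCG{2}$ and an arithmetical interpretation $f$ based on $\PR_T(x)$ with $T \nvdash f(A)$. By the definition of $\QPL(\PR_T)$, this means $A \notin \QPL(\PR_T)$. Since $\PR_T(x)$ belongs to the family over which the intersection is taken, $A$ is not in the intersection, which completes the contrapositive.

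There is no genuine obstacle here: all the substantial work lies in Theorem~\ref{main2} and in Wolter and Zakharyaschev's finite model property. The only points to check are bookkeeping. The one-variable hypothesis is exactly what Theorem~\ref{thm:WZ} requires. Non-validity in a frame must be unpacked as non-validity in some model on that frame, because Theorem~\ref{main2} is stated for models.
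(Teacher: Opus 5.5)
Your proposal is correct and follows the paper's route. The paper obtains this corollary by combining Theorem~\ref{thm:WZ} with the preceding corollary, which is itself an immediate consequence of Theorem~\ref{main2}; your contrapositive argument is exactly this combination with that intermediate corollary unfolded inline.
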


Let $\mathcal{M}=(W, R, D, \Vdash)$ be a rooted finite constant domain Kripke model.
We may assume that $W=\{1, 2, \ldots,n\}$ and $1$ is a root of $\mathcal{M}$. 
Moreover, we may assume that $D = \{0, 1, \ldots, d\}$. 
For each $k\in D$, we define a formula $\theta_k(x)$ as follows:
\begin{itemize}
    \item 
        If $k\neq 0$, then $\theta_k(x) : \equiv (x = \num{k})$, and 
    \item 
        $\theta_0(x) : \equiv (x = 0 \lor x > \num{d})$. 
\end{itemize}

The following lemma is easily proved and corresponds to Lemma \ref{lem:AD}. 

\begin{lemma}\label{theta}
Let $k, l \in D$. 
\begin{enumerate}
    \item $\PA \vdash \exists x \theta_k(x)$.
    \item If $k \neq l$, then $\PA \vdash \forall x \neg \bigl(\theta_k(x) \land \theta_{l}(x) \bigr)$. 
    \item $\PA \vdash \forall x \bigl(\bigvee_{k_0 \in D} \theta_{k_0}(x) \bigr)$.
    \end{enumerate}
\end{lemma}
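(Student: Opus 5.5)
The three clauses are elementary facts about the formulas $\theta_k(x)$. Each has a short $\PA$-proof obtained by case analysis on the finitely many numerals $\num{0},\num{1},\ldots,\num{d}$. I will treat them in order.

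\emph{Clause 1.} If $k\neq 0$, the witness $x:=\num{k}$ works, since $\PA\vdash \num{k}=\num{k}$. If $k=0$, the witness $x:=0$ works, since $\PA\vdash 0=0$ and hence $\PA\vdash \theta_0(0)$. In both cases $\PA\vdash\exists x\,\theta_k(x)$ by existential introduction.

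\emph{Clause 2.} Let $k\neq l$. First suppose both are nonzero. Then $\theta_k(x)\land\theta_l(x)$ gives $\num{k}=\num{l}$, and $\PA\vdash\neg(\num{k}=\num{l})$ for distinct standard numbers. Next suppose $l=0$ and $k\neq 0$. From $x=\num{k}$ together with $x=0\lor x>\num{d}$ we get either $\num{k}=0$ or $\num{k}>\num{d}$. Both are refutable in $\PA$ because $1\le k\le d$, and $\PA$ decides equalities and inequalities between numerals. The case $k=0$, $l\neq 0$ is symmetric. Generalizing over $x$ yields $\PA\vdash\forall x\neg(\theta_k(x)\land\theta_l(x))$.

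\emph{Clause 3.} This is the only clause needing a fact about arbitrary elements, and it is still routine. I would use the standard $\PA$-theorem
\[
\forall x\,\bigl(x\le\num{d}\to x=0\lor x=\num{1}\lor\cdots\lor x=\num{d}\bigr),
\]
which is proved by induction on $d$ in the metatheory. Combining it with the linearity of $<$ in $\PA$, namely $x\le\num{d}\lor x>\num{d}$, reduces the goal to cases. If $x>\num{d}$ or $x=0$, then $\theta_0(x)$ holds. If $x=\num{k}$ with $1\le k\le d$, then $\theta_k(x)$ holds. Either way $\bigvee_{k_0\in D}\theta_{k_0}(x)$ follows, and we generalize over $x$.

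No step is a real obstacle. The only point requiring care is Clause 3, where one must invoke the $\PA$-provable finite-enumeration of the elements $\le\num{d}$ rather than reason about standard numbers. This is also why $\theta_0$ absorbs all $x>\num{d}$: that choice makes the $\theta_k$ a $\PA$-provable partition of the universe, as Lemma \ref{lem:AD} requires in the variable-domain setting.
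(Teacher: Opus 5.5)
Your proof is correct and follows the route the paper intends: the paper only remarks that Lemma~\ref{theta} is ``easily proved'' and gives no details. Your argument fills in those details. For Clause 1 you use the witnesses $0$ and $\num{k}$. For Clause 2 you refute $\num{k}=\num{l}$, $\num{k}=0$ and $\num{k}>\num{d}$ for $1\le k\le d$. For Clause 3 you combine the $\PA$-provable enumeration of $x\le\num{d}$ with trichotomy.
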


For any tuple $\ora{a}$ of elements or variables, let $\mathrm{lh}(\ora{a})$ denote its length, and we write $a_{u}$ for its $u$-th component.
Using the Recursion Theorem, we recursively define a function $h$ as follows:
\begin{align*}
& h(0)=0, \\
& h(l+1)=
\begin{cases}
i & \text{if } h(l)=0 \ \&\  i=\min \{j \in W \mid j\ \text{is activated at}\ l\}, \\
h(l) & \text{otherwise},
\end{cases}
\end{align*}
where ``\emph{$j$ is activated at $l$}'' if there exist a number $s$, finite tuples $\ora{k_0},\ldots,\ora{k_s}$ of elements of $D$, $\LA$-formulas $\varphi(\ora{x_0}),\psi_1(\ora{x_1}),\ldots,\psi_s(\ora{x_s})$ in $F_l$, and finite tuples $\ora{b_0},\ora{b_1},\ldots,\ora{b_s}$ of numbers satisfying the following conditions: 
\begin{enumerate}
    \item for all $t$ with $1\leq t\leq s$, $\bigl[
\forall \ora{x_t}\,
(\lambda(\overline{j})\land \bigwedge_{u< \lh (\ora{k_t})}\theta_{k_{t,u}}(x_{t,u})
\to \psi_t(\ora{x_t}))
\in P_{T,l}\bigr]$, 

    \item $\forall \ora{x_0}\,
\bigl(\lambda(\overline{j})\land \bigwedge_{u<\lh(\ora{k_0})}\theta_{k_{0,u}}(x_{0,u})
\to \neg\varphi(\ora{x_0})\bigr)
 \in P_{T,l}$, 

    \item $\bigwedge_{t \leq s}  \bigwedge_{u< \lh (\ora{k_t})}\theta_{k_{t,u}}(\num{b_{t,u}})$ holds,  

    \item $P_{T,l} \cup\{\psi_1(\ora{\overline{b_1}}),\ldots,\psi_s(\ora{\overline{b_s}})\} \vdash^{\mathrm{tc}} \varphi(\ora{\overline{b_0}})$. 
    Equivalently, $P_{T,l} \vdash^{\mathrm{tc}} \bigwedge_{1 \leq t \leq s}\psi_t(\ora{\num{b_t}}) \to \varphi(\ora{\num{b_0}})$. 
\end{enumerate}
Here, $\lambda(x) :\equiv \exists y (h(y) = x)$.

At first sight, the binary relation ``$j$ is activated at stage $l$'' appears to be computably enumerable, since it contains unbounded existential quantifications over tuples $\ora{b_0},\ora{b_1},\ldots,\ora{b_s}$ of numbers.
However, this relation is in fact primitive recursive. 
As a consequence, we obtain the following proposition.

\begin{proposition}\label{PR_h}
The function $h$ is primitive recursive.
\end{proposition}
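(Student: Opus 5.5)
The plan is to prove that the relation ``$j$ is activated at $l$'' (as a relation in $j,l$, and given the values $h(0),\ldots,h(l)$) is primitive recursive. The recursion defining $h$ is then a course-of-values recursion with a primitive recursive step, so $h$ is primitive recursive. Note that $\lambda(\num{j})$ enters the activation conditions only through its Gödel number. Since $\lambda$ is fixed by the Recursion Theorem, $\gn{\lambda(\num{j})}$ is a primitive recursive function of $j$. Membership in $P_{T,l}$ and $F_l$, the predicate $\vdash^{\mathrm{tc}}$ (decided by truth tables), and the substitution of numerals are all primitive recursive. Hence clauses 1, 2 and 4 are primitive recursive once $s$, the $\ora{k_t}$ and the $\ora{b_t}$ are given. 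Clause 3 is trivially decidable. So everything reduces to bounding the existential quantifiers by a primitive recursive function of $l$.

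The bounds on $s$, the $\ora{k_t}$ and the formulas are routine. All of $\varphi,\psi_1,\ldots,\psi_s$ lie in the finite set $F_l$, and each tuple $\ora{k_t}$ has length at most the number of variables of a formula in $F_l$, so at most $l$. Each component lies in the finite set $D$. Repeating a pair $(\psi_t,\ora{k_t})$ with the same substituted tuple is useless for clause 4, so we can bound $s$ once we bound the $b$'s. The real issue is the numbers $\ora{b_t}$. For a component with $k_{t,u}\neq 0$, clause 3 forces $b_{t,u}=k_{t,u}\le d$. For $k_{t,u}=0$, either $b_{t,u}=0$ or $b_{t,u}>d$, and the latter is unbounded.

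The main step, and the expected obstacle, is a compression lemma. Suppose a witness exists. Then another witness exists with the same $s$, $\ora{k_t}$ and formulas, in which every $b_{t,u}$ is at most $g(l)$ for a primitive recursive $g$, for instance $g(l)=d+(l+2)\cdot l^2+1$. Take the large values $c_1<\cdots<c_m$ (those $>d$) occurring among the $b_{t,u}$. Here $m\le (s+1)l$, which is itself bounded once $s$ is bounded in terms of the number of distinct values. Replace them by $c'_1<\cdots<c'_m$ chosen as follows: $c'_1=d+l+2$, and $c'_{i+1}-c'_i=\min(c_{i+1}-c_i,\,l+1)$. Note that every constant term occurring in a formula of $F_l$ has value less than $l$. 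A substituted term therefore has the form $\su^{e}(\num{c})$ with $e<l$, or is a term built with $+,\times$ around such numerals.

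The claim is that the substitution $c_i\mapsto c'_i$ preserves, and reflects, syntactic identity among all propositionally atomic formulas in $P_{T,l}\cup\{\psi_t(\ora{\num{b_t}})\}\cup\{\varphi(\ora{\num{b_0}})\}$. Two such atoms coincide only if their term trees match position by position. A numeral $\su^{c_i+e}(0)$ can equal $\su^{c_{i'}+e'}(0)$ only when $|c_i-c_{i'}|<l$. It can equal a numeral from $P_{T,l}$ only when $c_i+e<l$, which is impossible since $c_i>d$ and the shift is above $l$; after compression the same impossibility holds. Gaps below $l+1$ are preserved exactly, so all such coincidences are preserved.

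Since $I$ commutes with substitution and the value of $\vdash^{\mathrm{tc}}$ depends only on the identity pattern of propositional atoms, clause 4 is preserved. Clauses 1 and 2 do not involve the $b$'s. Clause 3 is preserved because the new values are still $>d$. I expect the careful verification of this syntactic-identity claim, covering terms with $+$ and $\times$ as well as formulas in $P_{T,l}$ that contain long numerals, to be the delicate part. If the chosen gap $l+1$ turns out to be insufficient, I would simply enlarge it to any primitive recursive bound exceeding the maximal numeral depth in $F_l$.

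With the $b$'s bounded by $g(l)$, there are at most primitive recursively many distinct substituted formulas $\psi_t(\ora{\num{b_t}})$, and this bounds $s$. All quantifiers in the definition of activation thus become bounded by primitive recursive functions of $l$, which proves that $h$ is primitive recursive.
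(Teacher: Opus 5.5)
\textbf{Where the proof breaks: the bound on $s$.} This is a genuine gap, and it is not merely technical. Your compressed values are bounded by roughly $d+l+2+(m-1)(l+1)$ with $m\le(s+1)l$. But you bound $s$ only \emph{after} bounding the $\ora{b_t}$ (by discarding repeated instances), so the argument is circular. Your $g(l)=d+(l+2)l^2+1$ silently assumes $s<l$.

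No sharper compression can supply the missing bound. Your argument uses nothing about $P_{T,l}$ beyond $P_{T,l}\subseteq F_l$, and for arbitrary such sets activation is undecidable. To see this, take a set $\mathcal{S}$ of Wang tiles and pairwise distinct atoms $A_q(x,y)$ for $q\in\mathcal{S}$, e.g.\ $x+\su^{q}(y)=y\times x$. Let $C(x,y)$ be the Boolean combination of $A_q(x,y)$, $A_q(\su x,y)$, $A_q(x,\su y)$ saying ``exactly one tile at $(x,y)$, matching its right and upper neighbours''. Suppose $P_{T,l}$ consists of $\forall x\forall y\,(\lambda(\num{j})\land\theta_0(x)\land\theta_0(y)\to C(x,y))$ together with the clause-2 formula $\lambda(\num{j})\to\neg\varphi$ for a fresh sentence $\varphi$. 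Since $\su(\num{a})\equiv\num{a+1}$, the admissible premises $C(\num{a},\num{b})$ are exactly the local constraints of tiling (essentially) a quadrant. Hence $j$ is activated at $l$ iff finitely many of them are propositionally inconsistent, i.e.\ iff $\mathcal{S}$ does not tile the quadrant.

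That is undecidable by Berger's theorem, whereas for each fixed $s$ the question is decidable. So no recursive function of $l$ bounds a minimal witnessing $s$. Nor can one bound the $b$'s, since by your own remark bounded $b$'s would bound $s$. Note also that here, apart from the harmless premise $\neg\varphi$, only the pair $(C,(0,0))$ is available, and a single instance of it is satisfiable. So many instances of the \emph{same} pair with \emph{different} tuples are indispensable.

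\textbf{Comparison with the paper.} The paper never bounds the $\ora{b_t}$. For fixed $s$, templates and $\ora{k_t}$, it notes that $\vdash^{\mathrm{tc}}$ depends only on the identity pattern of the finitely many atoms. It expresses each pattern by $\{0,\su\}$-formulas (Lemmas~\ref{PR_equiv1} and~\ref{lem_equiv_fml}) and decides the result by quantifier elimination for $(\mathbb{N},0,\su)$. For fixed $s$, your compression is a legitimate, more elementary substitute for that step.

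The paper bounds $s$ by asserting that w.l.o.g.\ no pair $(\psi,\ora{k})$ is repeated. Your weaker remark, that only repetitions with the same substituted tuple are redundant, is the correct one. The example above shows the stronger claim fails for arbitrary $P_{T,l}$, so it cannot be borrowed to close your gap.

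\textbf{A smaller, repairable error in the compression.} Even for fixed $s$, the step is wrong as stated. A value $c_i>d$ may still be $<l$, so $\num{c_i}$ (or $\su^e(\num{c_i})$) can coincide with numerals occurring in $P_{T,l}$ or in the templates. For example, take $\num{d+1}=\num{d+1}\in P_{T,l}$, goal template $x=x$ and $b_0=d+1$: clause 4 holds, but it fails once $b_0$ is moved to $d+l+2$. You must freeze every value reachable from $[0,l]$ through gaps $\le l$ and compress only the remaining blocks. That part can be fixed; the bound on $s$ cannot.
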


Although the proof of this proposition is essential in our proof of Theorem \ref{main2}, we postpone it to the Appendix in order not to interrupt the flow of the proof of Theorem~\ref{main2}.

Since $\lambda(x)$ is a $\Sigma_1$ formula, we have $\PA \vdash \lambda(x) \to \Prov_T(\gn{\lambda({\dot{x}})})$. 
Also, $\theta_k(x)$ is also a $\Sigma_1$ formula, $\PA \vdash \theta_k(x) \to \Prov_T(\gn{\theta_k({\dot{x}})})$.

\begin{lemma}\label{Proph2}
\leavevmode
\begin{enumerate}
\item 
$\PA \vdash \forall y \forall x (h(y) = x \land x \neq 0 \to \forall z \geq y \, h(z)=x)$.
\item
$\PA \vdash \forall x \forall y (0<x<y \land \lambda(x) \to \neg \lambda (y))$.
\end{enumerate}
\end{lemma}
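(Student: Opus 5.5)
Both clauses follow from the defining recursion of $h$, formalized in $\PA$; by Proposition~\ref{PR_h}, $h$ is primitive recursive, so $h(y)=x$ is $\PA$-provably equivalent to a $\Delta_1$ formula and its recursion equations are provable in $\PA$. In particular, $\PA$ proves $h(0)=0$ and
\[
\forall l\,\bigl(h(l+1)=h(l) \lor (h(l)=0 \land h(l+1)\neq 0)\bigr).
\]
This holds because in the second case of the definition the new value is $\min\{j\in W \mid j \text{ is activated at } l\}$, which is an element of $W=\{1,\ldots,n\}$ and hence nonzero. In the ``otherwise'' case the value is unchanged. Reading the definition by cases gives one more provable fact: if $h(l)\neq 0$, then $h(l+1)=h(l)$. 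The only way to change value is from $0$.

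For clause 1, I would argue inside $\PA$. Fix $y$ and $x$ with $h(y)=x$ and $x\neq 0$, and prove $\forall z\geq y\, h(z)=x$ by induction on $z$. Equivalently, prove $h(y+w)=x$ by induction on $w$. The base case is the hypothesis. For the step, assume $h(y+w)=x\neq 0$. Then the first case of the recursion does not apply, since it requires $h(y+w)=0$. So $h(y+w+1)=h(y+w)=x$. The induction formula is $\Delta_1$ (in fact primitive recursive), so the induction is available in $\PA$.

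For clause 2, again argue in $\PA$. Suppose $0<x<y$, $\lambda(x)$ and $\lambda(y)$. Take witnesses $a,b$ with $h(a)=x$ and $h(b)=y$. By totality of the order, either $a\leq b$ or $b\leq a$. If $a\leq b$, clause 1 applied to $h(a)=x\neq 0$ gives $h(b)=x$, so $x=y$, contradicting $x<y$. If $b\leq a$, clause 1 applied to $h(b)=y\neq 0$ (note $y>x>0$) gives $h(a)=y$, so again $x=y$, a contradiction. Hence $\neg\lambda(y)$.

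I do not expect a genuine obstacle. The only delicate point is that all of this relies on $h$ having a primitive recursive definition whose recursion equations $\PA$ proves. That definition uses $\lambda$ itself via the Recursion Theorem, and the fact that it is primitive recursive is exactly what Proposition~\ref{PR_h} supplies. The monotonicity argument above uses only the shape of the recursion (nonzero values are absorbing), not the activation condition.
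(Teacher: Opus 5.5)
Your proposal is correct and takes the same route as the paper. The paper only says that clause 1 is proved by induction in $\PA$ and clause 2 follows from clause 1; you supply exactly those details, namely that nonzero values of $h$ are absorbing by the shape of the recursion, and a comparison of the witnesses ($a\le b$ or $b\le a$) for $\lambda(x)$ and $\lambda(y)$.
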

\begin{proof}
Clause 1 is proved by using induction in $\PA$ and Clause 2 follows from clause 1.
\end{proof}

\begin{lemma}\label{Proph3}
\begin{enumerate}
    \item 
$\PA \vdash \exists x (\lambda(x) \land x \neq 0) \leftrightarrow \neg \Con_T$.
    \item 
For each $i \in W$, $T \nvdash \neg \lambda(\num{i})$.
    \item 
For each $n \in \omega$, $\PA \vdash \forall x \forall y (h(x)=0 \land h(x+1)=y \land y \neq 0 \to x>\num{n})$.
\end{enumerate}
\end{lemma}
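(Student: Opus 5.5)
We prove the three clauses in order, with Clause 1 as the main step and Clauses 2 and 3 as consequences of it together with the fact that $h$ only leaves $0$ when some $j$ is activated.

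\textbf{Clause 1.} We argue inside $\PA$, using that $h$ is primitive recursive (Proposition \ref{PR_h}).

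For ($\to$), suppose $\lambda(i)$ holds with $i\neq 0$. Take the least $l$ with $h(l+1)=i$; then $h(l)=0$ and $i$ is activated at $l$. Fix witnesses $\psi_t$, $\varphi$, $\ora{b_t}$ for the activation. By condition 3, $\theta_{k_{t,u}}(\num{b_{t,u}})$ holds, and since these are $\Sigma_1$ they are $T$-provable (formalized $\Sigma_1$-completeness for $\Prov_T$). Likewise $\lambda(\num{i})$ is a true $\Sigma_1$ sentence, hence $T$-provable.
\begin{itemize}
\item Instantiating the universally quantified formulas in conditions 1 and 2 (which lie in $P_{T,l}$, hence are $T$-provable) at $\ora{\num{b_t}}$ gives $\Prov_T(\gn{\psi_t(\ora{\num{b_t}})})$ for $1\le t\le s$ and $\Prov_T(\gn{\neg\varphi(\ora{\num{b_0}})})$.
\item Condition 4, together with the fact that tautological consequences of $P_{T,l}$ are $T$-provable, gives $\Prov_T(\gn{\bigwedge_t\psi_t(\ora{\num{b_t}})\to\varphi(\ora{\num{b_0}})})$.
\end{itemize}
Combining these, $\Prov_T(\gn{\varphi(\ora{\num{b_0}})})$ and $\Prov_T(\gn{\neg\varphi(\ora{\num{b_0}})})$ both hold, so $\neg\Con_T$.

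For ($\leftarrow$), suppose $\neg\Con_T$, and let $p$ be a $T$-proof of $\bot$. We aim to show that $1$ is activated at some $l$. Choose $l\ge p$ large enough that $P_{T,l}$ contains a proof of $\bot$ and the relevant formulas lie in $F_l$. Take $s=0$, choose some $\varphi$ whose condition-2 formula lies in $F_l$, and pick $\ora{k_0}$ and $\ora{b_0}$ so that condition 3 holds. Since $\bot\in P_{T,l}$, every formula, in particular condition 2's formula and $\varphi(\ora{\num{b_0}})$, is a t.c.\ of $P_{T,l}$. Condition 2 formally asks for membership in $P_{T,l}$, not t.c.; but once $T$ proves $\bot$, every formula has a proof, so for large enough $l$ the required formula is in $P_{T,l}$. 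Hence some $j$ is activated, so $h$ eventually becomes nonzero, giving $\exists x(\lambda(x)\land x\neq 0)$.

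\textbf{Clause 2.} Suppose $T\vdash\neg\lambda(\num{i})$. By Clause 1, $\PA\vdash\lambda(\num{i})\to\neg\Con_T$. Moreover, $T\vdash\neg\lambda(\num{i})$ is witnessed by a standard proof $q$, and every $\varphi$ is in $F_l$ for large $l$. We then show that $i$ is activated at some standard $l$: take $s=0$, $\varphi\equiv\bot$ and an empty tuple (say $\ora{k_0}$ empty, so condition 3 holds trivially). Condition 2 requires $\lambda(\num{i})\to\neg\bot$ to be in $P_{T,l}$, which holds for large $l$ since it is provable. But condition 4 requires $P_{T,l}\vdash^{\mathrm{tc}}\bot$, which would need inconsistency. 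So this direct choice fails. Instead we use $\varphi\equiv\lambda(\num{i})$: condition 2 needs $\lambda(\num{i})\to\neg\lambda(\num{i})$ in $P_{T,l}$, which is provable from $\neg\lambda(\num{i})$. Condition 4 needs $P_{T,l}\vdash^{\mathrm{tc}}\lambda(\num{i})$, which again requires an actual proof of $\lambda(\num{i})$.

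The intended route is therefore different. Since $T$ is consistent, Clause 1 gives $\mathbb N\models h\equiv 0$, so no $j$ is ever activated, and $T\vdash\neg\lambda(\num i)$ must be made contradictory through activation. I would pick $\varphi\equiv\neg\lambda(\num{i})$, so that condition 2's formula is $\lambda(\num i)\to\neg\neg\lambda(\num i)$. This is $T$-provable outright, so it lies in $P_{T,l}$ for large standard $l$. Condition 4 then needs $\neg\lambda(\num i)\in P_{T,l}$, which holds for large $l$ by the assumption $T\vdash\neg\lambda(\num i)$. Thus $i$, or some smaller $j$, is activated at a standard $l$, so $h$ becomes nonzero. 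Clause 1 then gives $\neg\Con_T$ true, contradicting the consistency of $T$. This activation bookkeeping is the step I expect to be the main obstacle.

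\textbf{Clause 3.} Argue in $\PA$: activation at $l$ requires formulas in $F_l$, which is empty for $l=0$, and more strongly requires formulas with a proof $\le l$. For each standard $n$, I would check that no $j$ is activated at any $l\le n$. Since all such $l$ are standard, this is a finite verification: any activation at a standard $l$ yields, via the Clause 1 argument, a real proof of a contradiction within $P_{T,l}$. That is false in $\mathbb N$ and, being a $\Delta_0$ fact about standard numbers, is refutable in $\PA$.
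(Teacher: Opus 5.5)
Your proposal is correct and follows the paper's proof clause by clause: formalized $\Sigma_1$-completeness plus the activation conditions for Clause 1, activation of $i$ at a standard stage combined with Clause 1 for Clause 2, and $\mathbb{N}\models h\equiv 0$ transferred to $\PA$ instance by instance for Clause 3. The only variation is your final Clause 2 witness $\varphi\equiv\neg\lambda(\num{i})$, which uses the hypothesis $T\vdash\neg\lambda(\num{i})$ for condition 4 (with a tautology for condition 2), whereas the paper's $\varphi\equiv\neg 0=1$ uses it for condition 2; both work.

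Two small slips to fix:
\begin{itemize}
\item In Clause 1 ($\leftarrow$), $\vdash^{\mathrm{tc}}$ treats an atomic falsehood such as $0=1$ as a propositional variable, so ``$\bot\in P_{T,l}$'' does not by itself make every formula a t.c. Take $\varphi\equiv 0=1$ as the paper does, or use that $\varphi(\ora{\num{b_0}})$ itself eventually has a proof.
\item In Clause 3, $\PA$-decidability of activation at standard stages comes from Proposition~\ref{PR_h}, not from the relation being $\Delta_0$. Also, what an activation would yield is $\neg\Con_T$ via Clause 1, not a proof of a contradiction inside $P_{T,l}$.
\end{itemize}
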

\begin{proof}
1. Argue in $\PA$: 

$(\leftarrow)$: Suppose that $T$ is inconsistent. 
We find numbers $p$ and $j \neq 0$ such that $0=1 \in P_{T,p}$ and $\lambda(\num{j}) \to \neg 0=1 \in P_{T, p}$. 
Then $s=0$ and $\varphi \equiv 0=1$ witness the relation ``$j$ is activated at $p$'' to hold, and so we obtain $h(p+1) \neq 0$.

\medskip

$(\rightarrow)$: Suppose $\lambda(\num{i})$ and $i \neq 0$. Then there exists a number $l$ such that $h(l)=0$ and $h(l+1) =i$. 
Since $i$ is activated at $l$, we find $s$, $\ora{k_0}, \ldots,\ora{k_s} \in D$, $\varphi(\ora{x_0}), \psi_1(\ora{x_1}), \ldots, \psi_s(\ora{x_s})$ in $F_l$, and $\ora{b_0}, \ldots, \ora{b_s}$ satisfying the four conditions described above. 
It follows that $T$ proves
\[
\bigwedge_{1 \leq t \leq s} \forall \ora{x_t}
\left(\lambda(\overline{i})\land \bigwedge_{u< \lh (\ora{k_t})}\theta_{k_{t,u}}(x_{t,u})
\to \psi_t(\ora{x_t}) \right)
\]
and
\[
\forall \ora{x_0}\,
\left(\lambda(\overline{i})\land \bigwedge_{u<\lh(\ora{k_0})}\theta_{k_{0,u}}(x_{0,u})
\to \neg\varphi(\ora{x_0}) \right).
\]
Thus, $T$ proves the sentence
\[
\left(\lambda(\num{i}) \land \bigwedge_{t \leq s}  \bigwedge_{u< \lh (\ora{k_t})}\theta_{k_{t,u}}(\num{b_{t,u}}) \right) \to \left(\bigwedge_{1 \leq t \leq s}\psi_t(\ora{\num{b_t}}) \land \neg \varphi(\ora{\num{b_0}})\right). 
\]
Since $P_{T,l} \vdash^{\mathrm{tc}} \bigwedge_{1 \leq t \leq s}\psi_t(\ora{\num{b_t}}) \to \varphi(\ora{\num{b_0}})$, the formula $\bigwedge_{1 \leq t \leq s}\psi_t(\ora{\num{b_t}}) \to \varphi(\ora{\num{b_0}})$ is $T$-provable.
Hence, $T$ proves $\neg \bigl(\lambda(\num{i}) \land \bigwedge_{t \leq s}  \bigwedge_{u< \lh (\ora{k}_t)}\theta_{k_{t,u}}(\num{b_{t,u}}) \bigr)$.

On the other hand, since $\lambda(\num{i}) \land \bigwedge_{t \leq s}  \bigwedge_{u< \lh (\ora{k}_t)}\theta_{k_{t,u}}(\num{b_{t,u}})$ is a true $\Sigma_1$ sentence, it is provable in $T$ by formalized $\Sigma_1$-completeness. 
Therefore, $T$ is inconsistent.

\medskip

2. Suppose, towards a contradiction, that $T \vdash \neg \lambda(\num{i})$ for some $i \in W$. 
Equivalently, $\mathbb{N} \models \Prov_T(\gn{\neg \lambda(\num{i})})$.
We argue in the standard model $\mathbb{N}$: 
We find a number $p$ such that $\lambda(\num{i}) \to \neg \neg 0=1 \in P_{T,p}$ and $\neg 0=1 \in P_{T,p}$. 
Thus,  $s=0$ and $\varphi \equiv \neg 0=1$ witness the relation ``$i$ is activated at $p$'' to hold.
Then $h(p+1) \neq 0$, and it follows from Clause 1 that $\neg \Con_T$ holds.
This contradicts the consistency of $T$.

\medskip

3. Since $\mathbb{N} \models \Con_T$, it follows from Clause 1 that $\mathbb{N} \models \forall y \forall x (h(x)=y \to y=0)$. 
Thus, we obtain for any $n \in \omega$, $\mathbb{N} \models h(\num{n})=0$ and $\PA \vdash h(\num{n}) = 0$.
Therefore, $\PA \vdash h(x) = 0 \land h(x+1)=y \land x \leq \num{n} \to y=0$.
\end{proof}

We now define a provability predicate of $T$ satisfying the conditions required in Theorem \ref{main2}.
For this purpose, we construct a primitive recursive function $g$ which enumerates all theorems of $T$, and then define $\PR_g(x) : \equiv \exists y\,(g(y)=x)$.
The function $g$ is constructed step by step, and the construction consists of Procedures~1 and 2.
It starts with Procedure~1, where the values of $g$ are determined by referring to the proof predicate $\Proof_T(x,y)$.
At the same time, it refers to the values of the function $h$.
As long as $h$ keeps the value $0$, the construction of $g$ remains in Procedure~1.
Once $h$ takes a nonzero value, the construction of $g$ moves to Procedure~2.
In Procedure~2, formulas are output in a controlled way so that $\PR_g(x)$ satisfies $\DCG{2}$ while at the same time embedding the Kripke model $\mathcal{M}$ into arithmetic.

The method of defining a provability predicate by constructing such a function $g$ through Procedures~1 and 2 originates with Guaspari and Solovay \cite{GS79}.
The method of defining $g$ while referring to the behavior of a previously constructed function $h$ has been used in our recent work (see \cite{KK} for an overview of this line of research).

We start defining the function $g$. 
In the definition of $g$, we identify each formula with its G\"odel number.

\medskip

\textbf{Procedure 1.}

Stage $l$:
\begin{itemize}
\item If $h(l+1) =0$,
\begin{equation*}
  g(l)  = \begin{cases}
       \varphi & \text{if}\ l\ \text{is a}\ T\text{-proof of}\ \varphi,\ \text{that is,}\, \Proof_T(\varphi, l)\ \text{holds},\\
               0 & \text{otherwise}.
             \end{cases}
  \end{equation*}

Then, go to Stage $l+1$.

\item If $h(l+1) \neq 0$, go to Procedure 2.
\end{itemize}

\medskip

\textbf{Procedure 2.}

Suppose $l$ and $i \neq 0$ satisfy $h(l)=0$ and $h(l+1)=i$. 

Let 
$\{\xi_u \}_{u \in \omega}$ be a primitive recursive enumeration of all $\LA$-formulas in which each $\LA$-formula appears infinitely many times.

\begin{equation*}
g(l+u) : = \begin{cases} \xi_u & \text{ if } \xi_u \text{ is ready at}\ u,\\
  0 & \text{otherwise},
		\end{cases}
\end{equation*}
where a formula \emph{$\xi_u$ is ready at $u$} if there exist a number $s$, finite tuples $\ora{k_0},\ldots,\ora{k_s}$ of elements of $D$, $\LA$-formulas $\varphi_0(\ora{x_0}),\ldots,\varphi_s(\ora{x_s})$ in $F_{l-1}$, and finite tuples $\ora{a_0}, \ldots, \ora{a_s}$ of numbers satisfying the following conditions: 
\begin{enumerate}
    \item for all $j \in W$ with $i R j$ and for all $t \leq s$, $\forall \ora{x_t}\,
\bigl(\lambda(\overline{j})\land \bigwedge_{u< \lh (\ora{k_t})}\theta_{k_{t,u}}(x_{t,u})
\to \varphi_t(\ora{x_t}) \bigr)\in P_{T,l-1}$, 

    \item every element of $\ora{a_0}, \ldots, \ora{a_s}$ is $\leq u$,

    \item $\bigwedge_{t \leq s}  \bigwedge_{u< \lh (\ora{k_t})}\theta_{k_{t,u}}(\num{a_{t,u}})$ holds,

    \item $P_{T,l-1} \cup \{\varphi_0(\ora{a_0}), \ldots, \varphi_s(\ora{a_s}) \} \vdash^{\mathrm{tc}} \xi_u$.
\end{enumerate}

We have finished the definition of the function $g$. 

\begin{lemma}\label{lem_Prg_Prov}\label{lem:equiv}
$\PA + \Con_T \vdash \PR_g(x) \leftrightarrow \Prov_T(x)$.
\end{lemma}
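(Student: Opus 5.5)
Our plan is to argue inside $\PA + \Con_T$ and use Lemma~\ref{Proph3}.1, which gives $\PA \vdash \Con_T \to \forall x\,(\lambda(x) \to x = 0)$. Since $h(0)=0$ and $h$ can only ever take the values $0$ or some $i \in W$, the assumption $\Con_T$ yields $\forall l\, h(l)=0$, and in particular $\forall l\, h(l+1)=0$. Formalizing this within $\PA$ poses no difficulty: if $h(l+1)=i\neq 0$ for some $l$, then $\lambda(\num{i})$ holds with $i\neq 0$, and Lemma~\ref{Proph3}.1 yields $\neg\Con_T$. We only need a finite disjunction over $i\in W$, since $h$ takes values in $W\cup\{0\}$ (this can be verified in $\PA$ by a trivial induction on $l$).

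Under the hypothesis $\forall l\,h(l+1)=0$, we next prove by induction on $l$ within $\PA$ that the construction of $g$ never leaves Procedure~1. More precisely, for every $l$ we show that $g(l)=\varphi$ if $\Proof_T(\varphi,l)$ holds, and $g(l)=0$ otherwise. This is simply the formalized form of the definition of $g$: stage $l$ of Procedure~1 is reached exactly when $h(m+1)=0$ for all $m<l$, and it then assigns $g(l)$ in the way just described. Since $g$ is given by a primitive recursive definition whose case distinction depends on the primitive recursive function $h$ (Proposition~\ref{PR_h}), $\PA$ proves the corresponding defining equations, and the induction goes through.

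From this we get both directions. If $\Proof_T(x,y)$, then $g(y)=x$, hence $\PR_g(x)$. Conversely, suppose $g(y)=x$. Then either $\Proof_T(x,y)$ holds, or else $x=0$ via the ``otherwise'' clause. In the second case we need $\Prov_T(0)$. We handle this by adopting the usual convention that $0$ is not a G\"odel number of a formula, or alternatively that the code $0$ is a $T$-theorem, so that $0$ is harmless as a dummy output. This is the only mildly delicate point, and it is a matter of coding conventions rather than mathematics. If the convention is that $0$ codes some formula, we instead choose a fixed $T$-provable sentence (such as $0=0$) as the default value; either way, $\Prov_T(x)$ follows.

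The main obstacle is the first step: checking carefully, inside $\PA$, that $\Con_T$ rules out any transition of $h$. Everything else is bookkeeping about the definition of $g$.
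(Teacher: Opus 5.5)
Your argument is the paper's proof: under $\Con_T$, Lemma~\ref{Proph3}.1 gives $h(l)=0$ for all $l$. Hence $g$ never leaves Procedure~1 and outputs exactly the $T$-theorems, together with the dummy value $0$.

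The paper states the equivalence only for codes of formulas and does not mention the dummy value. Your remark on it is fine with one correction. The convention that $0$ codes no formula does not yield $\Prov_T(0)$; under that convention the equivalence should be read for codes of formulas only. Your alternative of using a $T$-provable default output does remove the issue.
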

\begin{proof}
We proceed in $\PA + \Con_T$: By Lemma \ref{Proph3}.1, we have $h(l) =0$ for all $l$, and the definition of $g$ continues to be in Procedure 1. Thus, 
 it follows that for any formula $\varphi$ and any number $p$, $g (p) = \varphi$ if and only if $p$ is a $T$-proof of $\varphi$. 
\end{proof}

Lemma \ref{lem:equiv} shows that $\PR_g(x)$ is actually a provability predicate of $T$. 
We prove that $\PR_g(x)$ satisfies $\DCG{2}$. 

\begin{lemma}\label{lemD2U}
$\PA \vdash \forall x \forall y \bigl(\PR_g(x \dot{\to} y) \to (\PR_g(x) \to \PR_g(y)) \bigr)$.

\end{lemma}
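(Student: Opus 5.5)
We argue inside $\PA$ and split into two cases according to whether the construction of $g$ ever leaves Procedure~1. Fix $x,y$ with $\PR_g(x \dot{\to} y)$ and $\PR_g(x)$, say $g(p)=x\dot{\to}y$ and $g(q)=x$.

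\textbf{Case 1: $h(l)=0$ for all $l$.} Then $g$ stays in Procedure~1 forever, so every nonzero output of $g$ is a formula with a $T$-proof, and conversely every $T$-proof $l$ of $\varphi$ gives $g(l)=\varphi$. Thus $\PR_g$ coincides with $\Prov_T$, and since $\Prov_T$ satisfies $\DCG{2}$ provably in $\PA$, we get $\Prov_T(y)$ and hence $\PR_g(y)$. This is the same reasoning as in Lemma~\ref{lem:equiv}, except that we use the case hypothesis directly rather than $\Con_T$.

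\textbf{Case 2: there is $l$ with $h(l)=0$ and $h(l+1)=i\neq 0$.} By Lemma~\ref{Proph2}, $h$ stays at $i$ from then on, so $g$ is in Procedure~2 from stage $l$. The outputs of $g$ then split into two groups:
\begin{itemize}
\item the early outputs $g(m)$ with $m<l$, which are formulas in $P_{T,l-1}$ (a $T$-proof $m\le l-1$ of $\varphi$ gives $\varphi\in P_{T,l-1}$);
\item the late outputs $g(l+u)=\xi_u$, which are ready at $u$.
\end{itemize}
Every such formula is a t.c.\ of $P_{T,l-1}\cup\Phi$ for a suitable finite set $\Phi$ of instances $\varphi_t(\ora{a_t})$ with witness data satisfying conditions 1--3 of ``ready''. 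For early outputs we take $s=0$ and $\varphi_0\equiv 0=0$, which is harmless provided such trivial witnesses meet condition 1; this should hold since $\forall\ldots(\ldots\to 0=0)$ has a small proof, but it needs a check.

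Given $x\dot{\to}y$ witnessed by data $(s,\ora{k},\varphi,\ora{a})$ and $x$ witnessed by $(s',\ora{k}',\varphi',\ora{a}')$, concatenate the two witness lists into one list of length $s+s'+1$. Conditions 1 and 3 are componentwise and so are preserved. Condition 4 holds for $y$ by modus ponens at the level of tautological consequence, since t.c.\ is closed under unions of premise sets and propositional MP (the map $I$ commutes with $\to$). For condition 2, let $u_0$ bound all the $a$'s. Because $\{\xi_u\}$ lists each formula infinitely often, choose $u\ge u_0$ with $\xi_u=y$. Then $y$ is ready at $u$, so $g(l+u)=y$ and $\PR_g(y)$ holds.

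Everything here must be formalized in $\PA$: the existence of the repeated index $u$ (a primitive recursive property of the enumeration), the monotonicity of $P_{T,l-1}$, and the closure of $\vdash^{\mathrm{tc}}$ under union and MP, all provable by routine induction. The main obstacle I expect is the bookkeeping of witnesses for the early outputs and for the combination. In particular, condition 2 bounds the $a$'s by $u$ but not by formula sizes, and the formulas $\varphi_t$ must lie in $F_{l-1}$ while being uniformly quantified over all $j$ with $iRj$. Concatenation respects these constraints, and the infinite-repetition property of $\{\xi_u\}$ absorbs the bound on the $a$'s, so I would first verify carefully that the trivial witness for early outputs satisfies condition 1.
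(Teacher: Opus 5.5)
Your proposal is correct and is essentially the paper's argument. Your split on whether $h$ ever leaves $0$ is the paper's split on $\Con_T$ versus $\neg\Con_T$ (by Lemma~\ref{Proph3}.1), and in the second case you merge the two readiness witnesses and choose a later index $u$ with $\xi_u \equiv y$, as the paper does, while also spelling out the Procedure~1 cases that the paper dismisses as ``similar''. The check you flag does go through: the finitely many sentences $\lambda(\num{j}) \to 0=0$ (for $iRj$) have fixed standard $T$-proofs, and $\gn{0=0}$ is standard, so Lemma~\ref{Proph3}.3 places $l-1$ beyond all of these codes. This gives their membership in $P_{T,l-1}$ and $F_{l-1}$, just as $l>p$ is obtained in the proof of Lemma~\ref{lem:embedding}.
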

\begin{proof}
By Lemma \ref{lem_Prg_Prov}, we obtain $\PA + \Con_T \vdash \PR_g(x) \leftrightarrow \Prov_T(x)$. 
Since $\Prov_T(x)$ satisfies $\DCG{2}$, we have $\PA + \Con_T \vdash \forall x \forall y \bigl(\PR_g(x \dot{\to} y) \to (\PR_g(x) \to \PR_g(y)) \bigr)$.

We prove $\PA + \neg \Con_T \vdash \forall x \forall y \bigl(\PR_g(x \dot{\to} y) \to (\PR_g(x) \to \PR_g(y)) \bigr)$.
We argue in $\PA + \neg \Con_T$:
By Lemma \ref{Proph3}.1, there exist $l$ and $i \neq 0$ such that $h(l)=i$.
Suppose that formulas $\alpha_0 \to \alpha_1$ and $\alpha_0$ are output by $g$.
We only consider the case where $\alpha_0 \to \alpha_1$ and $\alpha_0$ are output by $g$ in Procedure 2, and other cases can be proved similarly.
Let $g(l+u_0)=\xi_{u_0} \equiv (\alpha_0 \to \alpha_1)$ and $g(l+u_1)= \xi_{u_1}\equiv \alpha_1$.
Then, for each $j \in \{0, 1\}$, $\alpha_j$ is ready at $u_j$. 

We find numbers $s_0$ and $s_1$, finite tuples $\ora{k_0},\ldots,\ora{k_{s_0}}$ and $\ora{l_0},\ldots,\ora{l_{s_1}}$ of elements of $D$, $\LA$-formulas $\varphi_0(\ora{x_0}),\ldots,\varphi_{s_0}(\ora{x_{s_0}}), \psi_0(\ora{y_0}),\ldots,\psi_{s_1}(\ora{y_{s_1}}) \in F_{l-1}$, and tuples $\ora{a_0}, \ldots, \ora{a_{s_0}}$ and $\ora{b_0}, \ldots, \ora{b_{s_1}}$ of numbers satisfying the conditions described above. 
We obtain $P_{T,l-1} \cup \{ \varphi_{0}(\ora{a_{0}}), \ldots, \varphi_{s_0}(\ora{a_{s_0}}) \} \vdash^{\mathrm{tc}} \alpha_0 \to \alpha_1$ and $P_{T,l-1} \cup \{ \psi_{0}(\ora{b_{0}}), \ldots, \psi_{s_1}(\ora{b_{s_1}}) \} \vdash^{\mathrm{tc}} \alpha_0$. 
It follows that
\[
P_{T,l-1} \cup \{ \varphi_{0}(\ora{a_{0}}), \ldots, \varphi_{s_0}(\ora{a_{s_0}}), \psi_{0}(\ora{b_{0}}), \ldots, \psi_{s_1}(\ora{b_{s_1}}) \} \vdash^{\mathrm{tc}} \alpha_1.
\]
Since $\alpha_1$ appears infinitely many times in the enumeration $\{\xi_u\}_{u \in \omega}$, there exists $u \geq \max \{u_0,u_1\}$ such that $\alpha_1 \equiv \xi_u$. 
These materials witness the relation ``$\xi_u$ is ready at $u$'' to hold, and so $g(l+u)= \xi_u \equiv\alpha_1$. 
Therefore, $\alpha_1$ is output by $g$. 

We go out from $\PA$. 
We conclude $\PA \vdash \forall x \forall y \left(\PR_g(x \dot{\to} y) \to (\PR_g(x) \to \PR_g(y)) \right)$ by the law of excluded middle.  
\end{proof}

Finally, we define an arithmetical interpretation $f$ based on $\PR_g(x)$ as follows: 
\[
    f\bigl(P(x_0,\ldots,x_{m-1})\bigr) : \equiv \bigvee_{\substack{i \in W \&\ k_0,\ldots,k_{m-1} \in D \\ \&\ i \Vdash P(k_0,\ldots,k_{m-1})}}(\lambda(\num{i}) \land \theta_{k_0}(x_0)\land \ldots\land \theta_{k_{m-1}}(x_{m-1})).
\]

\begin{lemma}\label{lem:embedding}
Let $i \in W$, $k_0, \ldots,k_{m-1} \in D$, and $B(x_0,\ldots,x_{m-1})$ be a modal formula.
\begin{enumerate}
    \item If $i \Vdash B(k_0,\ldots,k_{m-1})$, then $\PA \vdash \lambda(\num{i}) \land \theta_{k_0}(x_0) \land \cdots\land\theta_{k_{m-1}}(x_{m-1}) \to f(B(x_0,\ldots,x_{m-1}))$.
    \item
If $i \nVdash B(k_0,\ldots,k_{m-1})$, then $\PA \vdash \lambda(\num{i}) \land \theta_{k_0}(x_0) \land \cdots\land\theta_{k_{m-1}}(x_{m-1}) \to \neg f(B(x_0,\ldots,x_{m-1}))$.
\end{enumerate}
\end{lemma}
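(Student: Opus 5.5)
The plan is to prove Clauses 1 and 2 simultaneously by induction on the construction of $B$, mirroring Lemma 5 of \cite{AD90}, with Lemma~\ref{theta} playing the role of Lemma~\ref{lem:AD}. Throughout, I abbreviate $\Theta_{\ora{k}}(\ora{x}) :\equiv \bigwedge_{u<m}\theta_{k_u}(x_u)$.

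\emph{Atomic and propositional cases.} For $B \equiv P(\ora{x})$, Clause 1 is immediate, since $\lambda(\num{i}) \land \Theta_{\ora{k}}(\ora{x})$ is one of the disjuncts of $f(P(\ora{x}))$. For Clause 2, take any disjunct $\lambda(\num{i'}) \land \Theta_{\ora{k'}}(\ora{x})$ with $i' \Vdash P(\ora{k'})$. Then either $i' \neq i$ or $\ora{k'} \neq \ora{k}$.
\begin{itemize}
\item If $i' \neq i$, both are nonzero, so $\lambda(\num{i}) \land \lambda(\num{i'})$ is refuted by Lemma~\ref{Proph2}.2.
\item If $\ora{k'} \neq \ora{k}$, the conjunction is refuted by Lemma~\ref{theta}.2.
\end{itemize}
The cases $\top,\bot,\neg,\land,\lor,\to$ follow directly from the induction hypothesis.

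\emph{Quantifier cases.} These use that the domain is constant, so $D_i = D$ for every $i$. Suppose $i \Vdash \forall y\, B(\ora{k},y)$. Then for each $k' \in D$ we have $i \Vdash B(\ora{k},k')$. The induction hypothesis gives $\PA \vdash \lambda(\num{i})\land\Theta_{\ora{k}}(\ora{x})\land\theta_{k'}(y)\to f(B)$ for each $k'$. Lemma~\ref{theta}.3 then yields $\forall y\, f(B)$.

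If instead $i \nVdash \forall y B$, pick a witness $k'$ with $i \nVdash B(\ora{k},k')$. The induction hypothesis together with Lemma~\ref{theta}.1 ($\exists y\,\theta_{k'}(y)$) gives $\neg\forall y\, f(B)$. The $\exists$ case is dual.

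\emph{Modal case, Clause 1.} Let $i \Vdash \Box B(\ora{k})$. Argue in $\PA$ and assume $\lambda(\num{i})\land\Theta_{\ora{k}}(\ora{x})$. Since $i\neq 0$, there is $l$ with $h(l)=0$ and $h(l+1)=i$, so $g$ enters Procedure 2 at $l$. For each $j$ with $iRj$, the induction hypothesis gives the standard theorem $\forall\ora{x}(\lambda(\num{j})\land\Theta_{\ora{k}}(\ora{x})\to f(B)(\ora{x}))$ of $\PA \subseteq T$.

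By Lemma~\ref{Proph3}.3, $l$ exceeds every standard number. Hence these finitely many standard proofs lie in $P_{T,l-1}$, and $f(B)$ lies in $F_{l-1}$. Now take $s=0$, $\varphi_0 \equiv f(B)$ and $\ora{a_0}=\ora{x}$. Choose $u \geq \max\ora{x}$ with $\xi_u \equiv f(B)(\ora{\num{x}})$; such $u$ exists because every formula occurs infinitely often in the enumeration. Then $\xi_u$ is ready at $u$ trivially, so $g(l+u)$ outputs it, giving $\PR_g(\gn{f(B)(\ora{\dot x})})$.

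\emph{Modal case, Clause 2 (main obstacle).} Let $i\nVdash\Box B(\ora{k})$, and fix $j$ with $iRj$ and $j\nVdash B(\ora{k})$. The induction hypothesis gives the standard theorem $\forall\ora{x}(\lambda(\num{j})\land\Theta_{\ora{k}}(\ora{x})\to\neg f(B)(\ora{x}))$. Argue in $\PA$ with $\lambda(\num{i})\land\Theta_{\ora{k}}(\ora{x})$ and $l$ as above, and suppose for contradiction that $f(B)(\ora{\num{x}})$ is output by $g$. Two cases arise.
\begin{itemize}
\item If it is output in Procedure 1, at a stage $<l$, then it lies in $P_{T,l-1}$. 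This is the special case $s=0$ of the configuration below, with no $\psi$'s.
\item If it is output in Procedure 2, readiness yields $\varphi_0,\dots,\varphi_s\in F_{l-1}$ and tuples $\ora{a_t}$ satisfying $\Theta$. Condition 1 of readiness holds in particular for our $j$, and $P_{T,l-1}\cup\{\varphi_t(\ora{a_t})\}\vdash^{\mathrm{tc}} f(B)(\ora{\num{x}})$.
\end{itemize}

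In either case, I will show that $j$ was activated at $l-1$, using these data:
\begin{itemize}
\item the $\psi$'s are the $\varphi_t$, with $\ora{b}$'s equal to the $\ora{a}$'s;
\item the activation formula $\varphi$ is $f(B)$, with $\ora{b_0}=\ora{x}$, which satisfies $\Theta_{\ora{k}}$;
\item the required sentence $\forall\ora{x}(\lambda(\num{j})\land\Theta\to\neg f(B))$ is standard, hence in $P_{T,l-1}$ by Lemma~\ref{Proph3}.3.
\end{itemize}
Thus some $j'$ is activated at $l-1$, so $h(l)\neq 0$, contradicting $h(l)=0$.

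The delicate point is to index all memberships at $l-1$ rather than $l$. This is what makes the contradiction go through: $j$ being activated at $l$ itself would not contradict $h(l+1)=i$. It also requires checking that the Procedure~1 outputs fit into the readiness format via $P_{T,l-1}$.
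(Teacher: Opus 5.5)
Your proposal is correct and follows essentially the same route as the paper's proof. Both argue by simultaneous induction on $B$, using Lemma~\ref{theta} and Lemma~\ref{Proph2}.2 for the atomic and quantifier cases; in the $\Box$ case both use Lemma~\ref{Proph3}.3 to place the relevant standard theorems in $P_{T,l-1}$, verify readiness with $s=0$ for Clause~1, and for Clause~2 show in both the Procedure~1 and Procedure~2 subcases that $j$ is activated at $l-1$, contradicting $h(l)=0$ (a cosmetic slip: it is the standard theorems, not their proofs, that belong to $P_{T,l-1}$).
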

\begin{proof}
We prove Clauses 1 and 2 simultaneously by induction on the construction of $B$.
Suppose $B$ is an $m$-ary predicate symbol $P(x_0, \ldots,x_{m-1})$.

\medskip

1. If $i \Vdash P(k_0, \ldots,k_{m-1})$, then $\PA \vdash \lambda(\num{i}) \land  \theta_{k_0}(x_0) \land \cdots\land\theta_{k_{m-1}}(x_{m-1}) \to f(P(x_0,\ldots,x_{m-1}))$ follows immediately from the definition of $f$.

\medskip

2. Suppose $i \nVdash P(k_0,\ldots,k_{m-1})$. 
For $j \in W$ with $j \neq i$, $\PA \vdash \lambda(\num{i}) \to \neg \lambda(\num{j})$ follows from Lemma \ref{Proph2}.2.
By Lemma \ref{theta}.2, we obtain
\[
\PA \vdash \theta_{k_0}(x_0) \land \cdots \land \theta_{k_{m-1}}(x_{m-1}) \to \bigwedge_{\substack{l_0, \ldots,l_{m-1} \in D \\ i \Vdash P(l_0, \ldots ,l_{m-1})}} \neg (\theta_{l_0}(x_0) \land \cdots\land \theta_{l_{m-1}}(x_{m-1})).
\]
It follows that $\PA \vdash \lambda(\num{i}) \land \theta_{k_0}(x_0) \land \cdots \land \theta_{k_{m-1}}(x_{m-1}) \to \neg f(P(x_0, \ldots,x_{m-1}))$ by the definition of $f$.

\medskip

We prove the induction case. 
The cases $\neg, \vee, \land, \to$ are easily proved by using the induction hypothesis. 
We prove the cases $B (x_0, \ldots,x_{m-1}) \equiv \forall y \, C(y,x_0, \ldots,x_{m-1})$ and $B (x_0, \ldots,x_{m-1}) \equiv \Box C(x_0, \ldots,x_{m-1})$.

\medskip

Suppose $B$ is of the form $\forall y \, C(y,x_0, \ldots,x_{m-1})$.

\smallskip

1. Suppose $i \Vdash \forall y C(y,x_0,\ldots,x_{m-1})$. Then, for each $k \in D$, we obtain $i \Vdash C(k,k_0,\ldots,k_{m-1})$.
By the induction hypothesis, 
\[
    \PA \vdash \lambda(\num{i}) \land \theta_{k}(y) \land \theta_{k_0}(x_0) \land \cdots\land \theta_{k_{m-1}}(x_{m-1}) \to f(C(y,x_0, \ldots,x_{m-1})).
\]
Thus
\[
\PA \vdash \lambda(\num{i}) \land \left(\bigvee_{k \in D} \theta_{k}(y) \right) \land \theta_{k_0}(x_0) \land \cdots\land \theta_{k_{m-1}}(x_{m-1}) \to f(C(y,x_0, \ldots,x_{m-1}))
\]
and
\[
\PA \vdash \lambda(\num{i}) \land \left( \forall y \, \bigvee_{k \in D} \theta_{k}(y) \right) \land \theta_{k_0}(x_0) \land \cdots\land \theta_{k_{m-1}}(x_{m-1}) \to \forall y\, f(C(y,x_0, \ldots,x_{m-1})).
\]
It follows from Lemma \ref{theta}.3 that 
\[
\PA \vdash \lambda(\num{i}) \land \theta_{k_0}(x_0) \land \cdots\land \theta_{k_{m-1}}(x_{m-1}) \to f(\forall y\, C(y,x_0, \ldots,x_{m-1})).
\]

\medskip

2. Suppose $i \nVdash \forall y C(y,k_0,\ldots,k_{m-1})$. 
Then, there exists $k \in D$ such that
$i \nVdash C(k,k_0,\ldots,k_{m-1})$.
By the induction hypothesis, we obtain 
\[
\PA \vdash \lambda(\num{i}) \land \theta_k(y) \land \theta_{k_0}(x_0) \land \cdots\land \theta_{k_{m-1}}(x_{m-1}) \to \neg f(C(y,x_0,\ldots,x_{m-1})).
\]
It follows that
\[
\PA \vdash \lambda(\num{i}) \land \exists y \, \theta_k(y) \land \theta_{k_0}(x_0) \land \cdots\land \theta_{k_{m-1}}(x_{m-1}) \to \exists y \, \neg f(C(y,x_0,\ldots,x_{m-1})).
\]
By Lemma \ref{theta}.1, we conclude 
\[
\PA \vdash \lambda(\num{i})  \land \theta_{k_0}(x_0) \land \cdots\land \theta_{k_{m-1}}(x_{m-1}) \to \neg f(\forall y\, C(y,x_0,\ldots,x_{m-1})).
\]

\medskip

Suppose $B \equiv \Box C(x_0, \ldots,x_{m-1})$.

\smallskip

1. Suppose $i \Vdash \Box C(k_0,\ldots,k_{m-1})$.
Then, for each $j \in W$ with $i R j$, we obtain $j \Vdash C(k_0, \ldots,k_{m-1})$.
It follows from the induction hypothesis
that
\[
\PA \vdash \forall x_0 \cdots \forall x_{m-1}\bigl(\lambda(\num{j}) \land \theta_{k_0}(x_0) \land \cdots\land \theta_{k_{m-1}}(x_{m-1}) \to f(C(x_0, \ldots,x_{m-1}))\bigr).
\]
Then, there exists $p \in \omega$ such that for each $j \in W$ satisfying $iRj$, 
\begin{equation}\label{eq1}
    \forall x_0 \cdots \forall x_{m-1} \bigl(\lambda(\num{j}) \land \theta_{k_0}(x_0) \land \cdots \land \theta_{k_{m-1}}(x_{m-1}) \to f(C(x_0, \ldots x_{m-1})) \bigr) \in P_{T,p}. 
\end{equation}

We reason in $\PA + \lambda(\num{i})$: 
Let $a_0, \ldots, a_{m-1}$ be any numbers and suppose $\theta_{k_0}(\num{a_0}) \land \cdots \land \theta_{k_{m-1}}(\num{a_{m-1}})$. 
Let $l$ and $i \neq 0$ be such that $h(l)=0$ and $h(l+1) =i$. 
The construction of the function $g$ switches to Procedure 2 at Stage $l$.
We prove $f(C(\num{a_0}, \ldots, \num{a_{m-1}}))$ is output by $g$ in Procedure 2.

Let $u$ be any number such that $a_0, \ldots, a_{m-1} \leq u$ and $\xi_u \equiv f(C(\num{a_0}, \ldots,\num{a_{m-1}}))$. 
Then, $\xi_u$ is ready at $u$ because $s = 0$, $\ora{k}=(k_0, \ldots,k_{m-1})$, $f(C(x_0, \ldots,x_{m-1}))$, and $\ora{a} = (a_0, \ldots, a_{m-1})$ satisfy the required conditions. 
In fact, the first condition follows from (\ref{eq1}) and the fact $l > p$ follows from Lemma \ref{Proph3}.3. 
The second condition is the requirement that $a_0, \ldots, a_{m-1} \leq u$, which is already fulfilled. 
The third condition is also fulfilled because $\lambda(\num{i}) \land \theta_{k_0}(\num{a_0}) \land \cdots \land \theta_{k_{m-1}}(\num{a_{m-1}})$ holds. 
Finally, $P_{T, l-1} \cup \{f(C(\num{a_0}, \ldots,\num{a_{m-1}}))\} \vdash^{\mathrm{tc}} \xi_u$ trivially holds, which is the fourth condition. 
Therefore, we obtain $g(l+u) = \xi_u$. 
Thus $\PR_g(\gn{f(C(\num{a_0}, \ldots,\num{a_{m-1}}))})$ holds, that is, $f(\Box C(\num{a_0}, \ldots,\num{a_{m-1}}))$ holds.

\medskip

2. Suppose $i \nVdash \Box C(k_0, \ldots,k_{m-1})$. 
Then, there exists $j \in W$ such that $iRj$ and $j \nVdash C(k_0, \ldots,k_{m-1})$.
By the induction hypothesis, we obtain
\[
    \PA \vdash \forall x_0 \cdots \forall x_{m-1}\bigl(\lambda(\num{j}) \land \theta_{k_0}(x_0) \land \cdots \land \theta_{k_{m-1}}(x_{m-1}) \to \neg f(C(x_0, \ldots,x_{m-1})) \bigr)
\]
and there exists $p \in \omega$ such that this sentence is in $P_{T, p}$. 

We discuss in $\PA + \lambda(\num{i})$: 
Let $a_0, \ldots, a_{m-1}$ be any numbers and suppose $\theta_{k_0}(\num{a_0}) \land \cdots\land \theta_{k_{m-1}}(\num{a_{m-1}})$ holds. 
Let $\ora{k}=(k_0, \ldots,k_{m-1})$ and $\ora{a} = (a_0, \ldots,a_{m-1})$. 
Let $l$ and $i \neq 0$ be such that $h(l)=0$ and $h(l+1) = i$.

Suppose, towards a contradiction, that 
$f(C(\num{a_0}, \ldots,\num{a_{m-1}}))$ is output by $g$.
We distinguish the following two cases.

\medskip 

\noindent Case 1: $f(C(\num{a_0}, \ldots, \num{a_{m-1}}))$ is output in Procedure 1. \\
Since $l > p$ by Lemma \ref{Proph3}.3, we have that $f(C(\num{a_0}, \ldots, \num{a_{m-1}})) \in P_{T, l-1}$, and hence $P_{T, l-1} \vdash^{\mathrm{tc}} f(C(\num{a_0}, \ldots, \num{a_{m-1}}))$. 
Therefore, $s = 0$, $\ora{k} \in D$, $f(C(x_0, \ldots,x_{m-1})) \in F_{l-1}$, and $\ora{a}$ witness the relation ``$j$ is activated at $l-1$'' to hold. 
We obtain that $h(l) \neq 0$, this is a contradiction. 

\medskip

\noindent Case 2: $f(C(\num{a_0}, \ldots, \num{a_{m-1}}))$ is output in Procedure 2. \\
Suppose $g(l+u) = \xi_u \equiv f(C(\num{a_0}, \ldots, \num{a_{m-1}}))$.
Then, $\xi_u$ is ready at $u$, and so we find a number $s$, finite tuples $\ora{k_0}, \ldots,\ora{k_s}$ of elements of $D$, formulas $\varphi_0(\ora{x_0}),\ldots,\varphi_s(\ora{x_s}) \in F_{l-1}$, and finite tuples $\ora{b_0}, \ldots, \ora{b_s}$ of numbers satisfying the following four conditions: 
\begin{enumerate}
    \item for all $j' \in W$ with $i R j'$ and for all $t \leq s$, $\forall \ora{x_t}\,
\bigl(\lambda(\overline{j'})\land \bigwedge_{u< \lh (\ora{k_t})}\theta_{k_{t,u}}(x_{t,u})
\to \varphi_t(\ora{x_t}) \bigr)\in P_{T,l-1}$, 

    \item every element of $\ora{b_0}, \ldots, \ora{b_s}$ is $\leq u$,

    \item $\bigwedge_{t \leq s}  \bigwedge_{u< \lh (\ora{k_t})}\theta_{k_{t,u}}(\num{b_{t,u}})$ holds, 
    
    \item $P_{T,l-1} \cup \{\varphi_0(\ora{b_0}), \ldots, \varphi_s(\ora{b_s}) \} \vdash^{\mathrm{tc}} f(C(\num{a_0}, \ldots, \num{a_{m-1}}))$.
\end{enumerate}
Then, $s+1$, $\ora{k}, \ora{k_0}, \ldots,\ora{k_s} \in D$, $f(C(x_0, \ldots, x_{m-1})), \varphi_0(\ora{x_0}),\ldots,\varphi_s(\ora{x_s}) \in F_{l-1}$, and $\ora{a}, \ora{b_0}, \ldots, \ora{b_s}$ witness that the relation ``$j$ is activated at $l-1$'' to hold. 
It follows that $h(l)\neq 0$, a contradiction. 

We have proved that $\neg \PR_g(\gn{f(C(\num{a_0}, \ldots,\num{a_{m-1}}))})$ holds, that is, $\neg f(\Box C(\num{a_0}, \ldots,\num{a_{m-1}}))$ holds.
\end{proof}

We are ready to prove our main theorem of this section. 

\begin{proof}[Proof of Theorem~\ref{main2}]
Suppose that a modal sentence $A$ is not valid in a finite constant domain Kripke model.
By the generated submodel lemma (Lemma~\ref{gsl}), we may assume that $\mathcal{M}=(W,R, D, \Vdash)$ is a finite rooted constant domain Kripke model and that $1\in W$ is its root satisfying $1 \nVdash A$.
By the above construction, we obtain a $\Sigma_1$ provability predicate $\PR_g(x)$ satisfying $\DCG{2}$ and an arithmetical interpretation $f$ based on $\PR_g(x)$.
By Lemma \ref{lem:embedding}, we have $\PA \vdash \lambda(\num{1}) \to \neg f(A)$.
By Lemma~\ref{Proph3}.2, we have $T\nvdash \neg\lambda(\num{1})$.
It follows that $T\nvdash f(A)$.
\end{proof}

The constant domain assumption plays a substantial role in the present proof.
Since the provability predicate constructed in this section is not assumed to satisfy $\DCU{1}$, the existing methods of Artemov and
Japaridze~\cite{AD90} and of \cite{Kura24} cannot be applied directly. 
Therefore, we introduced a different mechanism such that the function $h$ is allowed to change its value according to the activation condition, which refers to the behavior of the function $g$.
This mechanism enables us to prove Lemma~\ref{lem:embedding} without relying on $\DCU{1}$.
In the expanding domain case, however, we do not know how to define the formulas $\theta_k(x)$ so as to fit the present construction.
For this reason, we restricted Theorem~\ref{main2} to constant domain Kripke models. 
We leave open whether this restriction can be removed.

\begin{problem}
Can the constant domain assumption in Theorem~\ref{main2} be removed?
\end{problem}

Another possible direction is to keep $\DCU{1}$ but drop $\DCU{2}$.
Indeed, $\Sigma_1$ provability predicates satisfying $\DCU{1}$ and $\DC{2}$ but not $\DCU{2}$ have been studied in \cite{Kura20,Kura21}. 
It is natural to ask whether such provability predicates can be used to embed finite Kripke models into arithmetic.

\begin{problem}
Can finite Kripke models of quantified modal logic be embedded into
arithmetic by means of $\Sigma_1$ provability predicates satisfying
$\DCU{1}$ and $\DC{2}$?
\end{problem}

\section*{Acknowledgements}
The first author was supported by JST SPRING, Grant Number JPMJSP2148. 
The second author was supported by JSPS KAKENHI Grant Number JP23K03200.

%
%
%

\appendix

\section*{Appendix: The primitive recursiveness of the function $h$}\label{app}
\addcontentsline{toc}{section}{Appendix}

In this appendix, we prove the primitive recursiveness of the function $h$ (Proposition \ref{PR_h}), whose proof was postponed in Section~\ref{sec:main2}.

We first show that whether two instances of terms or formulas obtained by substituting numerals are identical can be expressed by a formula in the language $\{0,\su\}$ of first-order arithmetic.
It is well known that the first-order theory of the structure $\mathbb{N}_{0, \su} = (\mathbb N,0,\su)$ admits effective quantifier elimination (cf.~Enderton \cite{Ende01}).
Moreover, inspecting the quantifier-elimination procedure, one has a primitive recursive procedure to determine the truth of a given sentence in this language.
Using this observation, we then prove that the binary relation
``$j$ is activated at stage $l$'' is in fact primitive recursive.
It follows that the function $h$ is primitive recursive.

Throughout the appendix, the expression $\alpha(u_0,\ldots,u_n)$ means that all (free) variables occurring in $\alpha$ are among $u_0,\ldots,u_n$.
By renaming variables if necessary, we may assume that the variables $u_0,\ldots,u_n$, $w_0,\ldots,w_m$, $\ora{v}$ are pairwise distinct, where $\ora{v}$ denotes a tuple of variables. 
Here $\equiv$ denotes syntactic identity of the resulting terms or formulas. 

\begin{lemma}\label{PR_equiv1}
Let $\ora{v}$ be any tuple of variables.
For any $\LA$-terms $t_0(u_0,\ldots,u_n,\ora{v})$ and $t_1(w_0,\ldots,w_m,\ora{v})$, there exists a formula $\varphi_{t_0, t_1, \ora{v}}(u_0,\ldots,u_n,w_0,\ldots,w_m)$ in the language $\{0, \su\}$ such that for all $a_0,\ldots,a_n,b_0,\ldots,b_m \in \omega$, the following are equivalent: 
\begin{enumerate}
    \item $t_0(\overline{a_0},\ldots,\overline{a_n},\ora{v})\equiv
t_1(\overline{b_0},\ldots,\overline{b_m},\ora{v})$, 
    \item $\mathbb{N}_{0,\su} \models
\varphi_{t_0, t_1, \ora{v}}(\overline{a_0},\ldots,\overline{a_n},\overline{b_0},\ldots,\overline{b_m})$.
\end{enumerate}
Furthermore, the formula $\varphi_{t_0, t_1, \ora{v}}$ can be found from $t_0$, $t_1$, and $\ora{v}$ in a primitive recursive way. 
\end{lemma}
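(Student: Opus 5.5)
We proceed by induction on the structure of $t_0$, with a subsidiary analysis of the structure of $t_1$. Throughout we use that numerals $\num{a}$ are the terms $\su^a(0)$. A syntactic identity $t_0(\ldots)\equiv t_1(\ldots)$ can therefore be read as a comparison of trees, in which each substituted numeral is a chain of $\su$'s ending in $0$. The variables in $\ora{v}$ are not substituted, so they remain as leaves that must match exactly.

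The key auxiliary notion is a \emph{numeral-shaped} term. For a term $t$, let $\mathrm{core}(t)$ be the result of stripping the outermost $\su$'s: write $t \equiv \su^c(t')$ with $t'$ not of the form $\su(\cdot)$. We first treat terms that may collapse into numerals:

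\begin{itemize}
\item If $t'$ is $0$, then after substitution $t$ is the numeral $\num{c}$.
\item If $t'$ is a substituted variable $u_p$, then $t$ becomes $\num{a_p+c}$.
\item Otherwise $t'$ is a variable from $\ora{v}$, or has $+$ or $\times$ at the top. In that case, after substitution $t$ is never a numeral, and its top non-$\su$ symbol is fixed.
\end{itemize}

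We define $\varphi_{t_0,t_1,\ora{v}}$ by recursion on the sum of the lengths of $t_0$ and $t_1$:

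\begin{enumerate}
\item Strip common leading $\su$'s: $\varphi_{\su(s_0),\su(s_1),\ora v}:\equiv\varphi_{s_0,s_1,\ora v}$.
\item If $t_0\equiv\su^c(u_p)$ and $t_1\equiv\su^{e}(w_q)$, let $\varphi$ be the atomic formula $\su^c(u_p)=\su^e(w_q)$.
\item If $t_0\equiv \su^c(u_p)$ and $t_1\equiv\su^e(0)$, let $\varphi$ be $\su^c(u_p)=\su^e(0)$, and symmetrically.
\item If $t_0\equiv \su^c(u_p)$ and $\mathrm{core}(t_1)$ is neither $0$ nor some $w_q$, then the substituted $t_0$ is a numeral but the substituted $t_1$ is not. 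Let $\varphi$ be $\bot$, written as $0=\su(0)$, and symmetrically.
\item If both cores are built from $+$ or $\times$ and the outer $\su$-counts agree (both cores having the same top symbol), say the cores are $r_0\circ r_0'$ and $r_1\circ r_1'$, set $\varphi:\equiv\varphi_{r_0,r_1,\ora v}\land\varphi_{r_0',r_1',\ora v}$. If the $\su$-counts or the operations differ, set $\varphi:\equiv\bot$.
\item If both cores are variables from $\ora v$, set $\varphi:\equiv\top$ (that is, $0=0$) when they are the same variable with the same $\su$-count, and $\bot$ otherwise. A $\ora v$-variable against $0$, a $u_p$, or a compound term gives $\bot$.
\end{enumerate}

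Correctness is checked along the same recursion. The main point is that substitution commutes with the tree structure. Since numerals contain no $+$, $\times$, or $\ora v$-variables, syntactic identity of substituted terms holds exactly when the top-level shapes agree and the immediate subterms are identical. In the numeral case, $\su^c(\num{a})\equiv\su^e(\num{b})$ iff $a+c=b+e$, which is exactly what $\su^c(u_p)=\su^e(w_q)$ expresses in $\mathbb N_{0,\su}$. Variable clashes cause no difficulty here: the $u$'s and $w$'s are distinct by assumption, and a repeated variable in $t_0$ simply reappears in several conjuncts. Primitive recursiveness of $t_0,t_1,\ora v\mapsto\varphi_{t_0,t_1,\ora v}$ holds because the construction is a course-of-values recursion on Gödel numbers of subterms, with only bounded case tests.

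The main obstacle is the bookkeeping in the mixed case: when stripping $\su$'s, one side may reach a substituted variable, which becomes a numeral, while the other side still has structure. Defining $\mathrm{core}$ together with the $\su$-count $c$ handles this uniformly, since all comparisons are then made on the pairs $(c,\mathrm{core})$.
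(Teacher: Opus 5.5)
Your argument is correct apart from a small omission in the case analysis (below), and it takes a somewhat different route from the paper's.

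\textbf{Comparison with the paper.} The paper inducts on the number of function symbols in $t_0$ alone, removing one $\su$ at a time. In the mixed case where $\su(t_0')$ faces a bare $w_j$, it sets
\[
\varphi_{t_0,t_1,\ora{v}} :\equiv \exists x\,\bigl(\varphi_{t_0',w_j,\ora{v}}(\ldots,x,\ldots)\land w_j=\su(x)\bigr).
\]
That is, it peels a successor off the numeral substituted for $w_j$ by means of an existential quantifier, so its formulas are in general quantified.

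Your normal form $\su^c(\mathrm{core})$ on both sides settles that situation in one step. You use the atomic formula $\su^c(u_p)=\su^e(w_q)$, or $\bot$ when the other core is a $\ora{v}$-variable or compound. As a result, your $\varphi_{t_0,t_1,\ora{v}}$ is always a conjunction of atomic formulas, $\top$, or $\bot$. This is slightly more informative: truth of such formulas is trivially decidable, without appeal to quantifier elimination. Nothing is gained downstream, however, because the proof of Proposition~\ref{PR_h} needs the decision procedure for $\mathbb{N}_{0,\su}$ anyway to handle the quantifiers over $\ora{b_0},\ldots,\ora{b_s}$. In exchange, the paper's version has a simpler case split, driven only by the shape of $t_0$ and closed off by a catch-all ``otherwise $\bot$''.

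\textbf{Omitted cases.} Your list is not exhaustive: you never treat a $0$-core against a $0$-core, nor a $0$-core against a compound core. The fix is immediate. For $\su^c(0)$ against $\su^e(0)$, take $\top$ if $c=e$ and $\bot$ otherwise; equivalently, the atomic sentence $\su^c(0)=\su^e(0)$. Every remaining combination gets $\bot$.

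\textbf{Overlapping clauses.} Clause~1 overlaps with clauses~2--3, for example on $\su(u_0)$ versus $\su(w_0)$. You should fix a priority between them so that $(t_0,t_1,\ora{v})\mapsto\varphi_{t_0,t_1,\ora{v}}$ is a well-defined primitive recursive function. Both choices yield equivalent formulas, so this is only bookkeeping.
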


\begin{proof}
We prove the lemma by induction on the number of occurrences of function symbols in $t_0$.

\medskip 

\noindent Base Step. 

\begin{description}
    \item [(i)] Suppose that $t_0(u_0,\ldots,u_n,\ora{v}) \equiv 0$.

    \begin{description}
        \item [(i-1)] If $t_1(w_0,\ldots,w_m,\ora{v}) \equiv 0$, then let $\varphi_{t_0, t_1, \ora{v}} : \equiv \top$.

        \item [(i-2)] If $t_1(w_0,\ldots,w_m,\ora{v})\equiv w_j$ for some $j \leq m$, then
\[
t_0(\overline{a_0},\ldots,\overline{a_n},\ora{v})\equiv
t_1(\overline{b_0},\ldots,\overline{b_m},\ora{v})
\iff
\overline{b_j} \equiv 0
\iff
\mathbb{N}_{0, \su} \models \overline{b_j} = 0.
\]
Let $\varphi_{t_0, t_1, \ora{v}} : \equiv (w_j = 0)$.

    \item [(i-3)] Otherwise, let $\varphi_{t_0, t_1, \ora{v}} : \equiv \bot$.

    \end{description}

    \item [(ii)] Suppose that $t_0(u_0,\ldots,u_n,\ora{v})\equiv u_i$ for some $i \leq n$.

    \begin{description}

        \item [(ii-1)] If $t_1(w_0,\ldots,w_m,\ora{v})\equiv \num{r}$, then let $\varphi_{t_0, t_1, \ora{v}} : \equiv (u_i= \num{r})$.

        \item [(ii-2)] If $t_1(w_0,\ldots,w_m,\ora{v})\equiv \su^r(w_j)$ for some $j \leq m$, then
\[
t_0(\overline{a_0},\ldots,\overline{a_n},\ora{v})\equiv
t_1(\overline{b_0},\ldots,\overline{b_m},\ora{v})
\iff
\overline{a_i}\equiv \su^r(\overline{b_j})
\iff
\mathbb{N}_{0, \su}\models \overline{a_i}=\su^r(\overline{b_j}).
\]
So let $\varphi_{t_0, t_1, \ora{v}} : \equiv (u_i=\su^r(w_j))$.

        \item [(ii-3)] Otherwise, let $\varphi_{t_0, t_1, \ora{v}} : \equiv \bot$.
    \end{description}

    \item [(iii)] Suppose that $t_0(u_0,\ldots,u_n,\ora{v})\equiv v$ for some $v \in \ora{v}$.

    \begin{description}
        \item [(iii-1)] If $t_1(w_0,\ldots,w_m,\ora{v})\equiv v$, then let $\varphi_{t_0, t_1, \ora{v}} : \equiv \top$.

        \item [(iii-2)] Otherwise, let $\varphi_{t_0, t_1, \ora{v}} : \equiv \bot$.

    \end{description}
\end{description}

\medskip

\noindent Induction Step. 

\smallskip

Suppose that the lemma holds for $\leq k$ and that the number of occurrences of function symbols in $t_0$ is $k+1$.

\begin{description}
    \item [(i)] Suppose that $t_0(u_0,\ldots,u_n,\ora{v})$ is $t_0'(u_0,\ldots,u_n,\ora{v})\ast t_0''(u_0,\ldots,u_n,\ora{v})$ for some $\LA$-terms $t_0'$ and $t_0''$, where $\ast\in\{+,\times\}$.

    \begin{description}
        \item[(i-1)] If $t_1(w_0,\ldots,w_m,\ora{v})$ is $t_1'(w_0,\ldots,w_m,\ora{v})\ast t_1''(w_0,\ldots,w_m,\ora{v})$ for some $\LA$-terms $t_1'$ and $t_1''$, then by the induction hypothesis, there exist formulas $\varphi_{t_0', t_1', \ora{v}}(u_0,\ldots,u_n,w_0,\ldots,w_m)$ and $\varphi_{t_0'', t_1'', \ora{v}}(u_0,\ldots,u_n,w_0,\ldots,w_m)$ in $\{0, \su\}$ such that
\[
t_0'(\ora{\overline{a}},\ora{v})\equiv
t_1'(\ora{\overline{b}},\ora{v})
\iff
\mathbb{N}_{0, \su}\models
\varphi_{t_0', t_1', \ora{v}}(\ora{\overline{a}},\ora{\overline{b}}),
\]
and 
\[
t_0''(\ora{\overline{a}},\ora{v})\equiv
t_1''(\ora{\overline{b}},\ora{v})
\iff
\mathbb{N}_{0, \su}\models
\varphi_{t_0'', t_1'', \ora{v}}(\ora{\overline{a}},\ora{\overline{b}}),
\]
Then, $t_0(\overline{a_0},\ldots,\overline{a_n},\ora{v})\equiv t_1(\overline{b_0},\ldots,\overline{b_m},\ora{v})$ is equivalent to
\[
    \mathbb{N}_{0, \su} \models \varphi_{t_0', t_1', \ora{v}}(\overline{a_0},\ldots,\overline{a_n},\overline{b_0},\ldots,\overline{b_m}) \land \varphi_{t_0'', t_1'', \ora{v}}(\overline{a_0},\ldots,\overline{a_n},\overline{b_0},\ldots,\overline{b_m}).
\]
Hence let $\varphi_{t_0, t_1, \ora{v}} : \equiv
\varphi_{t_0', t_1', \ora{v}}\land \varphi_{t_0'', t_1'', \ora{v}}$.

    \item [(i-2)] Otherwise, let $\varphi_{t_0, t_1, \ora{v}} :\equiv \bot$.

    \end{description}

    \item [(ii)] Suppose that $t_0(u_0,\ldots,u_n,\ora{v})\equiv \su\bigl(t_0'(u_0,\ldots,u_n,\ora{v})\bigr)$ for some $\LA$-term $t_0'$.

    \begin{description}
        \item [(ii-1)] If $t_1(w_0,\ldots,w_m,\ora{v})\equiv w_j$, then by the induction hypothesis, there exists a formula $\varphi_{t_0', t_1, \ora{v}}(u_0,\ldots,u_n,w_0,\ldots,w_m)$ in $\{0, \su\}$ such that
\[
t_0'(\overline{a_0},\ldots,\overline{a_n},\ora{v})\equiv \overline{c_j}
\iff
\mathbb{N}_{0, \su}\models
\varphi_{t_0', t_1, \ora{v}}(\overline{a_0},\ldots,\overline{a_n},\overline{b_0},\ldots, \overline{c_j}, \ldots, \overline{b_m}).
\]
Let $\varphi_{t_0, t_1, \ora{v}}(u_0,\ldots,u_n,w_0,\ldots,w_m)$ be the formula 
\[
\exists x\,
\bigl(\varphi_{t_0', t_1, \ora{v}}(u_0,\ldots,u_n,w_0,\ldots,x, \ldots, w_m)\land w_j = \su(x)\bigr).
\]
Then we obtain
\begin{align*}
& t_0(\overline{a_0},\ldots,\overline{a_n},\ora{v})\equiv
t_1(\overline{b_0},\ldots,\overline{b_m},\ora{v})\\
\iff &
\su\bigl(t_0'(\overline{a_0},\ldots,\overline{a_n},\ora{v})\bigr)\equiv \overline{b_j}\\
\iff & 
b_j > 0 \ \&\ t_0'(\overline{a_0},\ldots,\overline{a_n},\ora{v})\equiv \overline{b_j-1}\\
\iff &
b_j > 0 \ \&\ \mathbb{N}_{0, \su}\models \varphi_{t_0', t_1, \ora{v}}(\overline{a_0},\ldots,\overline{a_n},\overline{b_0},\ldots, \overline{b_{j}-1}, \ldots, \overline{b_m})\\
\iff &
\exists c \in \omega \, \text{s.t.}\ 
\bigl(\mathbb{N}_{0, \su}\models
\varphi_{t_0', t_1, \ora{v}}(\overline{a_0},\ldots,\overline{a_n},\overline{b_0},\ldots, \overline{c}, \ldots, \overline{b_m})
\land \overline{b_j}= \su(\overline{c})\bigr)\\
\iff &
\mathbb{N}_{0, \su}\models
\varphi_{t_0, t_1, \ora{v}}(\overline{a_0},\ldots,\overline{a_n},\overline{b_0},\ldots,\overline{b_m}).
\end{align*}

        \item [(ii-2)] If $t_1(w_0,\ldots,w_m,\ora{v})\equiv \su\bigl(t_1'(w_0,\ldots,w_m,\ora{v})\bigr)$ for some $\LA$-term $t_1'$, then by the induction hypothesis there exists a formula $\varphi_{t_0', t_1', \ora{v}}(u_0,\ldots,u_n,w_0,\ldots,w_m)$ such that
\[
t_0'(\overline{a_0},\ldots,\overline{a_n},\ora{v})\equiv
t_1'(\overline{b_0},\ldots,\overline{b_m},\ora{v})
\iff
\mathbb{N}_{0, \su}\models
\varphi_{t_0', t_1', \ora{v}}(\overline{a_0},\ldots,\overline{a_n},\overline{b_0},\ldots,\overline{b_m}).
\]
Then
\begin{align*}
& t_0(\overline{a_0},\ldots,\overline{a_n},\ora{v})\equiv
t_1(\overline{b_0},\ldots,\overline{b_m},\ora{v})\\
\iff &
\su\bigl(t_0'(\overline{a_0},\ldots,\overline{a_n},\ora{v})\bigr)\equiv
\su\bigl(t_1'(\overline{b_0},\ldots,\overline{b_m},\ora{v})\bigr)\\
\iff &
t_0'(\overline{a_0},\ldots,\overline{a_n},\ora{v})\equiv
t_1'(\overline{b_0},\ldots,\overline{b_m},\ora{v})\\
\iff &
\mathbb{N}_{0, \su}\models
\varphi_{t_0', t_1', \ora{v}}(\overline{a_0},\ldots,\overline{a_n},\overline{b_0},\ldots,\overline{b_m},\ora{v}).
\end{align*}
So let $\varphi_{t_0, t_1, \ora{v}} : \equiv \varphi_{t_0', t_1', \ora{v}}$.

        \item [(ii-3)] Otherwise, let $\varphi_{t_0, t_1, \ora{v}} : \equiv \bot$. \qedhere
    \end{description}
\end{description}
\end{proof}

\begin{lemma}\label{lem_equiv_fml}
Let $\ora{v}$ be any tuple of variables.
For any $\LA$-formulas $\alpha(u_0,\ldots,u_n,\ora{v})$ and $\beta(w_0,\ldots,w_m,\ora{v})$, there exists a formula $\psi_{\alpha, \beta, \ora{v}}(u_0,\ldots,u_n,w_0,\ldots,w_m)$ in $\{0, \su\}$ such that for all $a_0,\ldots,a_n,b_0,\ldots,b_m \in \omega$,
\[
\alpha(\overline{a_0},\ldots,\overline{a_n},\ora{v})\equiv
\beta(\overline{b_0},\ldots,\overline{b_m},\ora{v})
\iff
\mathbb{N}_{0, \su}\models
\psi_{\alpha, \beta, \ora{v}}(\overline{a_0},\ldots,\overline{a_n},\overline{b_0},\ldots,\overline{b_m}).
\]
Furthermore, the formula $\psi_{\alpha, \beta, \ora{v}}$ can be found from $\alpha$, $\beta$, and $\ora{v}$ in a primitive recursive way. 
\end{lemma}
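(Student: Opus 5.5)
We proceed by induction on the construction of $\alpha$, in parallel with the proof of Lemma~\ref{PR_equiv1}, which handles the terms. The idea is that syntactic identity of two substitution instances holds exactly when the two formulas have the same outermost shape and the corresponding immediate subexpressions are identical instances. Each such clause is a finite conjunction of conditions already expressible in $\{0,\su\}$.

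For the base step, let $\alpha$ be atomic, say $t_0 = t_0'$ or $t_0 < t_0'$ (or $\top$, $\bot$). If $\beta$ is atomic with the same relation symbol, say $t_1 \mathrel{R} t_1'$, we set
\[
\psi_{\alpha,\beta,\ora{v}} :\equiv \varphi_{t_0,t_1,\ora{v}} \land \varphi_{t_0',t_1',\ora{v}},
\]
using Lemma~\ref{PR_equiv1}. If both are the same logical constant, we set $\psi_{\alpha,\beta,\ora{v}} :\equiv \top$. In every other case we set $\psi_{\alpha,\beta,\ora{v}} :\equiv \bot$. Correctness is immediate from Lemma~\ref{PR_equiv1}, because substitution of numerals for $u_i$ and $w_j$ does not change the outermost symbol of an atomic formula.

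The propositional cases are equally direct. If $\alpha \equiv \alpha' \circ \alpha''$ and $\beta \equiv \beta' \circ \beta''$ with the same connective $\circ$, take the conjunction of the two formulas given by the induction hypothesis. If $\alpha \equiv \neg\alpha'$ and $\beta \equiv \neg\beta'$, take $\psi_{\alpha',\beta',\ora{v}}$. In all mismatched cases take $\bot$.

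The quantifier case is the one that needs care, because of bound variables. Suppose $\alpha \equiv Q x\,\alpha'$ and $\beta \equiv Q y\,\beta'$ with the same quantifier $Q$. The instances are identical only if $x$ and $y$ are the same variable and the bodies are identical instances. So if $x \not\equiv y$ we let $\psi_{\alpha,\beta,\ora{v}} :\equiv \bot$. If $x \equiv y$, we apply the induction hypothesis to $\alpha'$ and $\beta'$ with the extended tuple $\ora{v}, x$, and set
\[
\psi_{\alpha,\beta,\ora{v}} :\equiv \psi_{\alpha',\beta',\ora{v}x}.
\]
Two subtleties have to be checked here.
\begin{enumerate}
\item The bound variable $x$ may coincide with one of the $u_i$ or $w_j$. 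In that case substitution leaves the bound occurrences untouched, so within the body that variable must be treated as a fixed, unsubstituted variable. We handle this by removing it from the list of substituted variables on the relevant side and moving it into $\ora{v}$. The standing renaming convention (pairwise distinct variables) lets us assume this normalization has been done, and we will state it explicitly.
\item The induction hypothesis must be stated uniformly for arbitrary $\ora{v}$, which is why $\ora{v}$ appears as a parameter in the lemma at all. Since the induction is over $\alpha$, we keep $\ora{v}$ arbitrary throughout.
\end{enumerate}
Different quantifiers, or a shape mismatch, again give $\bot$.

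Finally, the construction is a course-of-values recursion on the code of $\alpha$. At each step it calls the primitive recursive map of Lemma~\ref{PR_equiv1} and performs only bounded syntactic operations: conjunction, case distinction on outermost symbols, and bookkeeping of variable lists. Hence $(\alpha,\beta,\ora{v}) \mapsto \psi_{\alpha,\beta,\ora{v}}$ is primitive recursive. The main obstacle is the quantifier case, specifically correctly tracking which variables are substituted and which are fixed when a bound variable clashes with a $u_i$ or $w_j$. The remaining cases are routine.
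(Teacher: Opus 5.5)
Your proposal is correct and follows the paper's proof essentially step for step. The paper also inducts on $\alpha$: atomic cases via Lemma~\ref{PR_equiv1} as a conjunction of term-identity formulas, Boolean cases by conjoining the induction hypotheses, and the quantifier case by requiring the same quantifier and bound variable and passing to the extended tuple $\ora{v}, y$, with $\bot$ for every shape mismatch. Your explicit treatment of a bound variable clashing with some $u_i$ or $w_j$ is slightly more careful than the paper, which absorbs that issue into its standing convention that the variables are pairwise distinct.
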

\begin{proof}
We prove the lemma by induction on the construction of $\alpha$.

\begin{description}
    \item [(i)] Suppose that $\alpha$ is of the form $t_0(u_0,\ldots,u_n,\ora{v}) = t_1(u_0,\ldots,u_n,\ora{v})$ for some $\LA$-terms $t_0$ and $t_1$. 
    \begin{description}
    \item [(i-1)] If $\beta$ is of the form $s_0(w_0,\ldots,w_m,\ora{v}) = s_1(w_0,\ldots,w_m,\ora{v})$ for some $\LA$-terms $s_0$ and $s_1$, then by Lemma \ref{PR_equiv1}, for each $i\in\{0,1\}$, there exists a formula $\gamma_i(u_0,\ldots,u_n,w_0,\ldots,w_m)$ in $\{0, \su\}$ such that
\[
t_i(\overline{a_0},\ldots,\overline{a_n},\ora{v})\equiv
s_i(\overline{b_0},\ldots,\overline{b_m},\ora{v})
\iff
\mathbb{N}_{0, \su}\models
\gamma_i(\overline{a_0},\ldots,\overline{a_n},\overline{b_0},\ldots,\overline{b_m}).
\]
Hence $\alpha(\overline{a_0},\ldots,\overline{a_n},\ora{v})\equiv \beta(\overline{b_0},\ldots,\overline{b_m},\ora{v})$ is equivalent to
\[
    \mathbb{N}_{0, \su} \models \gamma_0(\overline{a_0},\ldots,\overline{a_n},\overline{b_0},\ldots,\overline{b_m}) \land \gamma_1(\overline{a_0},\ldots,\overline{a_n},\overline{b_0},\ldots,\overline{b_m}), 
\]
and so let $\psi_{\alpha, \beta, \ora{v}}$ be $\gamma_0(u_0,\ldots,u_n,w_0,\ldots,w_m)\land
\gamma_1(u_0,\ldots,u_n,w_0,\ldots,w_m)$.

    \item [(i-2)] Otherwise, let $\psi_{\alpha, \beta, \ora{v}} \equiv \bot$. 
\end{description}

    \item [(ii)] The case that $\alpha$ is of the form $t_0(u_0,\ldots,u_n,\ora{v}) < t_1(u_0,\ldots,u_n,\ora{v})$ for some $\LA$-terms $t_0$ and $t_1$ is similarly proved as in Case (i). 

    \item [(iii)] The cases where $\alpha$ is of the form built from a Boolean connective are similar: if $\alpha$ and $\beta$ have different principal connectives, then let $\psi_{\alpha, \beta, \ora{v}} \equiv\bot$; if they have the same principal connectives, then apply the induction hypothesis.

    \item [(iv)] Suppose that $\alpha(u_0,\ldots,u_n,\ora{v})\equiv \exists y\,\xi(u_0,\ldots,u_n,\ora{v},y)$. 
    The universal quantifier case is treated in the same way as the existential quantifier case.
    
    \begin{description}
        \item[(iv-1)] If $\beta(w_0,\ldots,w_m,\ora{v})\equiv \exists y\,\eta(w_0,\ldots,w_m,\ora{v},y)$, then by the induction hypothesis, there exists a formula $\psi_{\xi, \eta, \ora{v}, y}(u_0,\ldots,u_n,w_0,\ldots,w_m)$ such that $\xi(\overline{a_0},\ldots,\overline{a_n},\ora{v},y)\equiv
\eta(\overline{b_0},\ldots,\overline{b_m},\ora{v},y)$ is equivalent to 
\[
\mathbb{N}_{0, \su}\models
\psi_{\xi, \eta, \ora{v}, y}(\overline{a_0},\ldots,\overline{a_n},\overline{b_0},\ldots,\overline{b_m}).
\]
Hence $\exists y\,\xi(\overline{a_0},\ldots,\overline{a_n},\ora{v},y) \equiv
\exists y\,\eta(\overline{b_0},\ldots,\overline{b_m},\ora{v},y)$ is also equivalent to 
\[
\mathbb{N}_{0, \su}\models
\psi_{\xi, \eta, \ora{v}, y}(\overline{a_0},\ldots,\overline{a_n},\overline{b_0},\ldots,\overline{b_m}),
\]
and thus let $\psi_{\alpha, \beta, \ora{v}} \equiv \psi_{\xi, \eta, \ora{v}, y}$.

    \item [(iv-2)] Otherwise, let $\psi_{\alpha, \beta, \ora{v}} \equiv \bot$. \qedhere
\end{description}
\end{description}
\end{proof}

We are ready to prove the primitive recursiveness of the function $h$. 

\begin{proof}[Proof of Proposition \ref{PR_h}]
It suffices to show that the binary relation ``$j$ is activated at stage $l$'' is primitive recursive.

First observe that, except for the tuples of numbers $\ora{b_0},\ora{b_1},\ldots,\ora{b_s}$, all objects appearing in the definition of activation, except for the tuples $\ora{b_0},\ldots, \ora{b_s}$, range over finite sets whose sizes are bounded by
a primitive recursive function of $l$.
Indeed, the formulas $\varphi,\psi_1,\ldots,\psi_s$ are required to belong to the finite set $F_l$. 
Moreover, for each such formula, the tuples $\ora{k_t}$ range over the finite set $D^{<\omega}$ with length matching the number of free variables of the formula. 
Thus, the number of possible pairs $(\psi(\ora{x}),\ora{k})$ which can occur in the definition is effectively bounded in terms of $l$. 
So, in checking whether $j$ is activated at stage $l$, we may assume without loss of generality that no such pair is repeated among $(\varphi,\ora{k_0}), (\psi_1,\ora{k_1}),\ldots,(\psi_s,\ora{k_s})$. 
Hence the number $s$ is bounded by a primitive recursive function of $l$.

Therefore, for every fixed $s$ and $\varphi,\psi_1,\ldots,\psi_s$, it suffices to show that 
\[
\exists \ora{b_0} \exists \ora{b_1} \cdots \exists \ora{b_s}\, \left(\bigwedge_{t \leq s}  \bigwedge_{u< \lh (\ora{k_t})}\theta_{k_{t,u}}(\num{b_{t,u}})  
\ \&\ P_{T,l}\cup \{\psi_1(\ora{\overline{b_1}}),\ldots,\psi_s(\ora{\overline{b_s}})\}
\vdash^{\mathrm{tc}} \varphi(\ora{\overline{b_0}}) \right)
\]
can be decided by a primitive recursive procedure.
Our strategy is to convert this statement into an equivalent statement in the language $\{0, \su\}$ in a primitive recursive way. 
Then the primitive recursive procedure to determine the truth of a given sentence in the language $\{0, \su\}$ applies.
Since the statement $\bigwedge_{t \leq s}  \bigwedge_{u< \lh (\ora{k_t})}\theta_{k_{t,u}}(\num{b_{t,u}})$ is easily written in the language $\{0, \su\}$, we prove the conversion of the remaining part $P_{T,l}\cup \{\psi_1(\ora{\overline{b_1}}),\ldots,\psi_s(\ora{\overline{b_s}})\}
\vdash^{\mathrm{tc}} \varphi(\ora{\overline{b_0}})$. 

Let $X$ be the set of all propositionally atomic subformulas occurring in formulas in the set
\[
P_{T,l}\cup\{\varphi(\ora{x_0}),\psi_1(\ora{x_1}),\ldots,\psi_s(\ora{x_s})\}.
\]
Let $\sim$ be an equivalence relation on $X$. 
We assign to each equivalence class $C$ of $\sim$ a suitable propositional variable $p_C$. 
For each $\LA$-formula $\chi$ built from elements of $X$, we recursively define a propositional formula $\chi^\sim$ as follows:
\begin{itemize}
\item if $\chi\in X$ is propositionally atomic, then $\chi^\sim :\equiv p_{[\chi]}$, 
\item $(\chi\circ \xi)^\sim : \equiv \chi^\sim \circ \xi^\sim$ for $\circ\in\{\land,\lor,\to\}$, 
\item $(\neg \chi)^\sim : \equiv \neg(\chi^\sim)$. 
\end{itemize}

We define an equivalence relation $\sim^*$ on $X$ by
\[
 \alpha \sim^* \beta
 \ :\Longleftrightarrow\
\alpha(\ora{\num{b_0}}, \ldots, \ora{\num{b_s}})\equiv
\beta(\ora{\num{b_0}}, \ldots, \ora{\num{b_s}}).
\]
Let $I$ be the map assigning to an arithmetic formula the corresponding propositional
formula used in the usual definition of tautological consequence.
For each equivalence class $[\alpha]$ of $\sim^*$, we define the propositional
variable $p_{[\alpha]}$ by
\[
p_{[\alpha]} : \equiv I\bigl(\alpha(\ora{\num{b_0}}, \ldots, \ora{\num{b_s}})\bigr).
\]
This is well-defined by the definition of $\sim^*$.
Then, by induction on the construction of $\chi$, the following identity is verified: 
\begin{equation}\label{eq2}
\chi^{\sim^*} \equiv I\bigl(\chi(\ora{\overline{b_0}},\ldots,\ora{\overline{b_s}})\bigr).
\end{equation}

We say that an equivalence relation $\sim$ on $X$ is \emph{good} if the propositional formula
\[
\bigwedge \left(\{I(\rho) \mid \rho\in P_{T,l}\}\cup \{\psi_1(x_1)^\sim, \ldots, \psi_s(x_s)^\sim\}\right) \to \varphi(x_0)^\sim 
\]
is a tautology. 
In the light of (\ref{eq2}), the statement ``$P_{T,l} \cup \{\psi_1(\ora{\num{b_1}}), \ldots, \psi_s (\ora{\num{b_s}}) \} \vdash^{\mathrm{tc}} \varphi(\ora{\num{b_0}})$'' is equivalent to the goodness of $\sim^*$. 
So, we obtain the following claim. 

\begin{claim}
The following statements are equivalent:
\begin{enumerate}
\item 
    $P_{T,l} \cup \{\psi_1(\ora{\num{b_1}}), \ldots, \psi_s (\ora{\num{b_s}}) \} \vdash^{\mathrm{tc}} \varphi(\ora{\num{b_0}})$. 
\item
There exists a good equivalence relation $\sim$ on $X$ such that
\[
    \forall \alpha,\beta\in X\, (\alpha\sim\beta \iff \alpha\sim^* \beta) \bigr).
\]
\end{enumerate}
\end{claim}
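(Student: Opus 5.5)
The claim is essentially immediate once $\sim^*$ and identity (\ref{eq2}) are in place, and I will prove the two directions separately. The key observation is that condition 2 forces $\sim$ to be $\sim^*$ itself, since an equivalence relation on $X$ is determined by which pairs it relates. So condition 2 just says that $\sim^*$ is good. The claim therefore reduces to showing that condition 1 is equivalent to the goodness of $\sim^*$.

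For (1)$\Rightarrow$(2), I take $\sim := \sim^*$. This is an equivalence relation on $X$ because syntactic identity of the substituted formulas is reflexive, symmetric and transitive. By definition, condition 1 says that
\[
\bigwedge \left(\{I(\rho) \mid \rho\in P_{T,l}\}\cup\{I(\psi_1(\ora{\num{b_1}})),\ldots,I(\psi_s(\ora{\num{b_s}}))\}\right)\to I(\varphi(\ora{\num{b_0}}))
\]
is a tautology. By (\ref{eq2}), $I(\psi_t(\ora{\num{b_t}}))\equiv \psi_t(\ora{x_t})^{\sim^*}$ and $I(\varphi(\ora{\num{b_0}}))\equiv\varphi(\ora{x_0})^{\sim^*}$. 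Elements of $P_{T,l}$ are sentences, so substitution leaves them unchanged, and (\ref{eq2}) gives $\rho^{\sim^*}\equiv I(\rho)$. Hence the displayed formula is literally the formula defining goodness of $\sim^*$, and $\sim^*$ is good.

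For (2)$\Rightarrow$(1), a relation $\sim$ as in condition 2 coincides with $\sim^*$ as a set of pairs. Therefore the class assignment $C\mapsto p_C$, and with it the translation $\chi\mapsto\chi^\sim$, coincide with those of $\sim^*$, once we fix the convention $p_{[\alpha]}:\equiv I(\alpha(\ora{\num{b_0}},\ldots,\ora{\num{b_s}}))$ used above. Goodness of $\sim$ is then goodness of $\sim^*$, and reading (\ref{eq2}) backwards gives the tautology stated in condition 1.

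The only point needing care is the well-definedness of the propositional variables and the verification of (\ref{eq2}). Both follow by induction on $\chi$: at propositionally atomic $\chi\in X$ the identity holds by the definition of $p_{[\chi]}$. For connectives, substitution commutes with $\circ$ and $\neg$, and $I$ commutes with them as well, because substituting numerals into a non-atomic formula never turns it into a propositionally atomic one. There is one mild subtlety: the variable $p_{[\alpha]}$ must not depend on the representative, and this is exactly what the definition of $\sim^*$ ensures. I expect no real obstacle in the claim itself. The substantive work comes afterwards, in using Lemma \ref{lem_equiv_fml} to express ``$\alpha\sim^*\beta$'' by formulas $\psi_{\alpha,\beta,\ora{v}}$ over $\mathbb{N}_{0,\su}$, and then quantifying over the finitely many good $\sim$.
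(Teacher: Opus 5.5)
Your proof is correct and is the paper's own argument. Condition 2 forces $\sim$ to be $\sim^*$, and by (\ref{eq2}) the formula defining goodness of $\sim^*$ is literally the tautology asserted in condition 1; the paper's proof consists of exactly that observation.

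Your appeal to $P_{T,l}$ consisting of sentences is not needed for this step, because goodness is defined with $I(\rho)$ rather than $\rho^{\sim}$. Where sentencehood does matter is later: it ensures that goodness of a fixed $\sim$ does not depend on the $\ora{b}$-dependent choice $p_{[\alpha]} :\equiv I(\alpha(\ldots))$, which the subsequent enumeration of good relations tacitly requires.
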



There are only finitely many equivalence relations on $X$, and their number is effectively bounded in terms of $l$.
Moreover, for each such equivalence relation, whether it is good can be decided in a primitive recursive way.
Indeed, for a fixed equivalence relation $\sim$, the formula in question is a propositional formula whose size is bounded by a primitive recursive function of $l$.
Since propositional validity is primitive recursive, it follows that the property of being good is primitive recursive.
So, we can primitive recursively enumerate all good equivalence relations on $X$ as $\sim_0, \ldots, \sim_k$.

\begin{align*}
& P_{T,l} \cup \{\psi_1(\ora{\num{b_1}}), \ldots, \psi_s (\ora{\num{b_s}}) \} \vdash^{\mathrm{tc}} \varphi(\ora{\num{b_0}})\\
 \iff & \exists \sim
\bigl(
\text{$\sim$ is good }
\ \& \ 
\forall \alpha,\beta\in X\,
(\alpha\sim\beta \iff \alpha\sim^* \beta)
\bigr).\\
\iff & \bigvee_{0 \leq i \leq k}\Bigl(\bigwedge_{\substack{\alpha, \beta \in X \\ \alpha \sim_i \beta }} \alpha(\ora{\num{b_0}}, \ldots, \ora{\num{b_s}})\equiv
 \beta(\ora{\num{b_0}}, \ldots, \ora{\num{b_s}}) \ \& \ \bigwedge_{\substack{\alpha, \beta \in X \\ \alpha \not\sim_i \beta }} \alpha(\ora{\num{b_0}}, \ldots, \ora{\num{b_s}})\not \equiv
 \beta(\ora{\num{b_0}}, \ldots, \ora{\num{b_s}}) \Bigr) \\
\iff & \mathbb{N}_{0, \su} \models \bigvee_{0 \leq i \leq k}\Bigl(\bigwedge_{\substack{\alpha, \beta \in X \\ \alpha \sim_i \beta }} \psi_{\alpha,\beta, \emptyset}(\ora{\num{b_0}}, \ldots, \ora{\num{b_s}}) \land  \bigwedge_{\substack{\alpha, \beta \in X \\ \alpha \not\sim_i \beta }} \neg \psi_{\alpha,\beta, \emptyset}(\ora{\num{b_0}}, \ldots, \ora{\num{b_s}}) \Bigr).
\end{align*}

This completes the proof. 
\end{proof}

\end{document}